\documentclass[11pt]{amsart}

\usepackage{sigma,verbatim,hyperref}

\usepackage{setspace}
%\doublespacing

% \usepackage{environ}
% \NewEnviron{killcontents}{}
% \let\proof\killcontents
% \let\endproof\endkillcontents

\begin{document}

\title{Popular products and continued fractions}

\author{Nikolay Moshchevitin, Brendan Murphy, and Ilya Shkredov}
\address[Nikolay Moshchevitin]{Lomonosov Moscow State University, Division of Mathematics, Moscow, Russia}
\email{moshchevitin@gmail.com}
\address[Brendan Murphy]{School of Mathematics, University of Bristol, Bristol, UK, and
Heilbronn Institute of Mathematical Research}
\email{brendan.murphy@bristol.ac.uk}
\address[Ilya Shkredov]{Steklov Mathematical Institute,ul. Gubkina, 8, Moscow, Russia, 119991,
IITP RAS, Bolshoy Karetny per. 19, Moscow, Russia, 127994, and
MIPT, Institutskii per. 9, Dolgoprudnii, Russia, 141701}
\email{ilya.shkredov@gmail.com}

\date{\today}

\maketitle

\begin{abstract}
We prove bounds for the popularity of products of sets with weak additive structure, and use these bounds to prove results about continued fractions.
Namely, we obtain a nearly sharp upper bound for the cardinality of Zaremba's set modulo $p$.
\end{abstract}

%\tableofcontents
\listoffixmes

\section{Introduction}

This paper is about a variation of the \emph{sum-product problem}, and the application of such results to problems on continued fractions.

\subsection{The sum-product problem}
The sum-product problem is to show quantitatively that a finite subset of a ring cannot be approximately closed under addition and multiplication, unless it is approximately a subring.
Originally, Erd\H{o}s and Szemer\'edi \cite{erdos1983products} considered a finite set $A$ of integers and asked if $A$ must grow under either addition or multiplication.
More precisely, they considered the \emph{sum set $A+A=\{a+a'\colon a,a'\in A\}$}\/ and \emph{product set $AA=\{aa'\colon a,a'\in A\}$}\/ and asked if we must have
\[
\max(|A+A|,|AA|)\gg |A|^{1+\delta}
\]
for some $\delta>0$.

We study a related phenomenon: if $A$ is a subset of $\F_p$ and $A+B$ is small for some set $B$, which may be much smaller than $A$, then for any non-zero element $x\in AA$, the number of ways to write $x=aa'$ with $a,a'\in A$ is $o(|A|)$.
That is, if $A$ is almost invariant under addition by a smaller set, then $AA$ contains no popular products.

\subsection{Summary of results}

Our first type of result shows that if the sumset of $A$ and $B$
\[
A+B=\{a+b\colon a\in A,b\in B\}
\]
is small, then the sumset of the set of reciprocals $A^{-1}$ with any other set $C$ 
\[
A^{-1}+C=\{a^{-1}+c\colon a\in A,c\in C\}
\]
must be large.
These results work when $B$ and $C$ are much smaller than $A$.
See Theorem~\ref{thm:1} and Theorem~\ref{thm:2}.

We use these results to show that if $A+B$ is small, where $B$ may be much smaller than $A$, then $A$ does not have any popular products.
That is, for all $\rho\not=0$,
\[
|A\cap \rho A^{-1}| = |\{(a,a')\in A\times A\colon aa'=\rho\}| = o(|A|).
\]
See Corollaries \ref{cor:1}, \ref{cor:7}, and \ref{cor:2}.

We use these bounds on popular products to bound the number of integers $1\leq a\leq p-1$ such that the continued fraction expansion of $a/p$ has partial quotients bounded by a fixed number $M$.
See Theorem~\ref{thm:7}.

\subsection{Methods}

To prove lower bounds for $\max(|A+B|,|A^{-1}+C|)$, we consider a set $S$ of linear fractional transformations that map at least $|A|$ element of $A^{-1}+C$ to $A+B$.
If both $A+B$ and $A^{-1}+C$ are not much larger than $A$, then $S$ is a set of \emph{rich linear fractional transformations}\/ of $Y=(A+B)\cup (A^{-1}+C)$.
This is related to Elekes' geometric proof \cite{elekes1997number,elekes1997linear} of a lower bound for $\max(|A+B|,|AC|)$; since we need $B$ and $C$ to be much smaller than $A$, our methods of proof are closer to that of the asymmetric sum-product theorem \cite{bourgain2005sum-product,shkredov2017remarks,murphy2017upper,murphy2017group}.

We use the $\ell^2$-flattening method of \cite{bourgain2008uniform-a} to prove asymptotic estimates for the number of rich linear fractional transformations.
See \cite{shkredov2018asymptotic} for similar results and methods.
In addition, a related result was proved by Bourgain~\cite{bourgain2012modular}, framed as an incidence bound for Cartesian product point sets and hyperbolas (corresponding to graphs of linear fractional transformations.)

\subsection{Notation}

Given two sets of finite subsets $A$ and $B$ of a commutative ring, we use $A\pm B$ to denote the \emph{sum set}\/ and \emph{difference set}\/ of $A$ and $B$
\[
A\pm B:=\{a\pm b\colon a\in A,b\in B\}
\]
and $AB$ to denote the \emph{product set}\/ of $A$ and $B$
\[
AB:=\{ab\colon a\in A,b\in B\}
\]
If the elements of $A$ are invertible, we use $A^{-1}$ to denote the set of inverses of elements of $A$.
The \emph{ratio set}\/ of $A$ and $B$ is $A/B=A(B\setminus\{0\})^{-1}$.
If $\rho\not=0$, we use $\rho A$ to denote the set of dilates of elements of $a$ by $\rho$
\[
\rho A:=\{\rho a\colon a\in A\}.
\]

All logarithms are base 2.

We use the standard Vinogradov symbols $\gg$ and $\ll$:
\[
f \ll g\iff \exists C>0 \quad f\leq C g,
\]
and $f\gg g$ if and only if $g\ll f$.
We write $f\asymp g$ if $f\ll g$ and $g\ll f$.
A subscript in the asymptotic notation, such as $f\ll_M g$, means that the implicit constant $C$ depends on the variable $M$.
We have used \emph{little-o notation}\/ in the introduction for brevity; we give precise statements below.

For a real number $x$, we use $\floor x$ to denote the greatest integer less than or equal to $x$, and we use $\ceil x$ to denote the least integer greater than or equal to $x$.
Thus, $\floor x \leq x < \floor x +1$ and $\ceil x -1<x\leq \ceil x$.

We use vertical bars to denote the cardinality of a set, for instance $|A|$.

% convolution (of measures and functions, groups, group actions)
If $G$ is a group acting on a set $X$, $f\colon G\to\bbC$ has finite support, and $\phi\colon X\to \bbC$, then we define the \emph{convolution}\/ $f\ast \phi\colon X\to\bbC$ by
\[
(f\ast \phi)(x):=\sum_{g\in G}f(g)\phi(g^{-1}x).
\]
A special case of this is when $X=G$ and $G$ acts on itself by left-translation.

\subsection{Organization}

This paper is organized as follows.
\begin{itemize}
\item In Section~\ref{sec:bounds-sums-recipr}, we state lower bounds for $\max(|A+B|,|A^{-1}+C|)$ and use these bounds to derive popular product bounds for sets that are almost invariant under addition with a smaller set.
\item In Section~\ref{sec:appl-cont-fract}, we apply the results from the previous section to a problem in continued fractions.
\item In Section~\ref{sec:bounds-rich-linear}, we prove bounds for the number of ``rich'' linear fractional transformations; this is the tool we use to prove bounds in Section~\ref{sec:bounds-sums-recipr}.
\item In Section~\ref{sec:proofs-bounds-sums} we prove the bounds for sums of reciprocals stated in Section~\ref{sec:bounds-sums-recipr}, using the results in Section~\ref{sec:bounds-rich-linear}.
\item In Section~\ref{sec:ell2-flatt-energ} we prove a $\ell^2$-flattening result for linear fractional transformations acting on the projective line.
\item In Sections~\ref{sec:proof-theor-thm:15} and \ref{sec:proof-theor-thm:5} we prove the results used to prove the rich linear fractional transformations results in Section~\ref{sec:bounds-rich-linear}.
\end{itemize}

\section{Bounds for sums of reciprocals and popular products}
\label{sec:bounds-sums-recipr}

In this section we state two lower bounds (Theorems~\ref{thm:1} and \ref{thm:2}) for sums of a set and its reciprocals, and then derive bounds for popular products.
The proofs of Theorems~\ref{thm:1} and \ref{thm:2} are in Section~\ref{sec:proofs-bounds-sums}, since they require technical results stated in Section~\ref{sec:bounds-rich-linear}.

\begin{thm}
 \label{thm:1}
Let $A, B,$ and $C$ be subsets of $\F_p$, and let $\rho$ be a non-zero element of $\F_p$.

There is a constant $b_0>1$ such that
for all $\epsilon>0$ and all $\delta\leq \frac 14 b_0^{-1/\epsilon}$,
if %$|A|\leq p^{1-\delta}$ and 
$\min(|B|,|C|)=p^\epsilon$, then for all sufficiently large $p$ we have
\[
|A+B| + |\rho A^{-1}+C| \gg \min(\sqrt{p|A|}, |A|p^\delta).
%|A+B| + |\rho A^{-1}+C| \gg  |A|p^\delta.
\]
\end{thm}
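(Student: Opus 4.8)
The plan is to argue by contradiction, exhibiting a large family of rich linear fractional transformations of the set $Y := (A+B)\cup(\rho A^{-1}+C)$ and then contradicting the upper bound for such transformations proved in Section~\ref{sec:bounds-rich-linear}. So suppose $|A+B| + |\rho A^{-1}+C|$ is smaller than a suitable small constant times $\min(\sqrt{p|A|},|A|p^{\delta})$; then $|Y|$ is small in the same sense, and we may assume $0\notin A$ at the cost of changing cardinalities by $O(1)$. Since $|A+B|\geq |A|$ and $|\rho A^{-1}+C|\geq |A|$ (as $B,C$ are nonempty), we also have $|A|\leq |Y|$, so $Y$ is only slightly larger than $A$: by a factor $p^{O(\delta)}$ in the regime $|Y|\leq |A|p^{\delta}$, and with $|Y|^2\leq p|A|$ in the complementary regime.

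For each $(b,c)\in B\times C$ I would introduce the linear fractional transformation
\[
g_{b,c}(t) = \frac{bt+(\rho-bc)}{t-c},
\]
whose associated matrix has determinant $-\rho\neq 0$. Solving $\rho a^{-1}+c = t$ for $a$ gives $a = \rho/(t-c)$, and hence a direct computation shows $g_{b,c}(\rho a^{-1}+c) = a+b$ for every $a\in A$. Therefore $g_{b,c}$ maps the $|A|$ distinct points $\{\rho a^{-1}+c : a\in A\}\subseteq \rho A^{-1}+C\subseteq Y$ to the $|A|$ points $\{a+b : a\in A\}\subseteq A+B\subseteq Y$, so $g_{b,c}$ is $|A|$-rich for $Y$. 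Moreover the assignment $(b,c)\mapsto g_{b,c}$ is injective: $c$ is recovered as the unique pole of $g_{b,c}$ and $b = g_{b,c}(\infty)$. Thus $S := \{g_{b,c} : (b,c)\in B\times C\}\subseteq \mathrm{PGL}_2(\F_p)$ is a set of $|A|$-rich transformations of $Y$ with $|S| = |B|\,|C|\geq p^{2\epsilon}$, using $\min(|B|,|C|)=p^{\epsilon}$.

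To close the argument I would feed $S$ and $Y$ into the bound for the number of rich linear fractional transformations from Section~\ref{sec:bounds-rich-linear}, splitting into the two regimes: when $|A|<p^{1-2\delta}$ the elements of $S$ are $(|Y|p^{-\delta}/2)$-rich for a $Y$ of size $\leq p^{1-\delta}$, and when $|A|\geq p^{1-2\delta}$ they are $(|Y|^2/p)$-rich, i.e.\ $Y$ sits at the threshold $|Y|^2\asymp p|A|$. In each case that estimate should bound $|S|$ by a quantity that is $o(p^{2\epsilon})$ once $\delta$ is small enough in terms of $\epsilon$; the constraint $\delta\leq\frac14 b_0^{-1/\epsilon}$ is exactly what is needed to absorb the $\epsilon$-dependent loss (morally a factor of the form $b_0^{1/\epsilon}$ coming from iterating the $\ell^2$-flattening step on the order of $1/\epsilon$ times, each step losing a bounded power), and this contradicts $|S|\geq p^{2\epsilon}$ for all large $p$. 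The main obstacle is this last step: correctly matching the two regimes of the claimed lower bound to the hypotheses and the $\epsilon$-dependent exponent in the rich-transformations estimate of Section~\ref{sec:bounds-rich-linear}, and bookkeeping the constants so that the $\delta$-range $\delta\leq\frac14 b_0^{-1/\epsilon}$ comes out. The geometric construction of the $g_{b,c}$ and the lower bound $|S|=|B||C|$ are routine by comparison.
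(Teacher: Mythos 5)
Your geometric setup is exactly right and matches the paper's approach: you pass to the set $Y=(A+B)\cup(\rho A^{-1}+C)$, introduce the linear fractional transformations $g_{b,c}$ (your $g_{b,c}$ maps $\rho A^{-1}+C$ into $A+B$, the paper's maps the other way, but the two choices are interchangeable), note that each $g_{b,c}$ is $|A|$-rich for $Y$ and that $(b,c)\mapsto g_{b,c}$ is injective, and then plan to contradict an incidence bound for rich transformations. However, there is a genuine gap: none of the quantitative results in Section~\ref{sec:bounds-rich-linear} (Theorem~\ref{thm:15}, Corollary~\ref{cor:4}) can be applied to $S$ merely on the strength of $|S|=|B||C|\ge p^{2\epsilon}$. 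Each of those results requires a \emph{non-concentration hypothesis}: the uniform measure $\nu$ on $S$ (after normalizing to land in $SL_2$) must satisfy $\nu(g\Gamma)\le K^{-1}$ for every coset $g\Gamma$ of every proper subgroup $\Gamma\le SL_2(\F_p)$. This is precisely the content of Lemma~\ref{lem:8}, which shows that $|g_1\B'g_2\cap S|\le\max(|B|,|C|)$ for Borel subgroups and $|g_1\G g_2\cap S|\le 8$ for dihedral subgroups, hence $K=\min(|B|,|C|)=p^{\epsilon}$ works. Your proposal never checks this; you list ``bookkeeping the constants'' as the main obstacle, but it is the subgroup non-concentration that is the actual missing ingredient, and it requires a case analysis (Borel vs.\ conjugates of $K_\epsilon$) that is not automatic. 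Without it you cannot invoke Theorem~\ref{thm:15} at all, and the parameter entering the flattening iteration is $K=\min(|B|,|C|)=p^{\epsilon}$ (giving $k\asymp 1/\epsilon$), not $|S|=p^{2\epsilon}$.

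A secondary issue is the shape of the concluding contradiction. You frame it as an upper bound on $|S|$ that should be $o(p^{2\epsilon})$, but the incidence estimate does not bound $|S|$; it bounds the \emph{average richness}. Concretely, once the non-concentration is in place, the inequality you get from Theorem~\ref{thm:15} is
\[
\frac 1{|S|}\sum_{g\in S}|Y\cap gY|\;\le\;\frac{|Y|^2}{p}+C_*|Y|p^{-\delta},
\]
while the richness gives a lower bound of $|A|$ for the left-hand side. Combining these two yields $|A|\le\frac{|Y|^2}{p}+C_*|Y|p^{-\delta}$, i.e.\ equation~\eqref{eq:32}, and then the claimed lower bound on $|Y|$ follows by considering which of the two terms on the right dominates (the $\sqrt{p|A|}$ branch versus the $|A|p^{\delta}$ branch). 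Reformulating the contradiction as a cap on $|S|$ can be made to work after some contortion, but it obscures the fact that the relevant exponent is controlled by the non-concentration parameter $K$, not by $|S|$.
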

In fact, if we write $W=(A+B)\cup (\rho A^{-1}+C)$, then we have
\begin{equation}
  \label{eq:32}
  |A|\leq \frac {|W|^2}p+C_*|W|p^{-\delta(k)},
\end{equation}
where $C_*\geq 6$ is an absolute constant and $\delta(k)=2^{-(k+2)}$, where \[k\gg \log_{\min(|B|,|C|)}p.\] 

Similar results were proved in \cite{shkredov2018asymptotic}, and other results about sums of reciprocals were proved in  \cite{yazici2015growth} and \cite{mojarrad2017conditional}.

Theorem~\ref{thm:1} implies a bound for popular products.
\begin{cor}
  \label{cor:1}
There is a constant $b_0>1$ such that the following holds for all $0<\kappa<1$, $\epsilon>0$, and $\delta\leq\frac 14 b_0^{-1/\epsilon}$. 

Suppose that $A\subseteq X+B$, where $A,X,B\subseteq \F_p$,
% $|A|<p^{1-\kappa}$, 
$|B+B|\leq \sigma|B|$, $|B|\geq p^\epsilon$, and $|X|\ll \frac{|A|}{|B|}|B|^\kappa$.

If $\rho\not=0$, then
\[
|A\cap \rho A^{-1}|\ll \max \left( \frac{\sigma^2|A|^2|B|^{2\kappa}}p, \frac{\sigma|A||B|^\kappa}{p^{\delta}} \right).
% \sigma |A|p^{-(1-\epsilon)\kappa}.
\]
\end{cor}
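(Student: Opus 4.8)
The plan is to deduce Corollary~\ref{cor:1} from Theorem~\ref{thm:1} by bounding $|A\cap \rho A^{-1}|$ in terms of the sizes of the relevant sumsets, and then applying the sumset bound $|A+B|+|\rho A^{-1}+C|\gg\min(\sqrt{p|A|},|A|p^\delta)$ with a judicious choice of the auxiliary sets $B$ and $C$. The key observation is that a popular product is really a statement about additive energy: writing $t:=|A\cap\rho A^{-1}|$, each element of this intersection is an element $a\in A$ with $a^{-1}\in\rho^{-1}A$, i.e.\ $\rho a^{-1}\in A$. First I would note that both $A$ and $\rho A^{-1}$ are contained in translates of small sumsets. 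Indeed, since $A\subseteq X+B$ we have $A+B\subseteq X+B+B$, so by the hypothesis $|B+B|\le\sigma|B|$ and Plünnecke--Ruzsa (applied to $B$, then summed over $|X|$ translates),
\[
|A+B|\le |X+B+B|\le |X|\,|B+B|\le |X|\sigma|B|\ll \sigma|A||B|^{\kappa}.
\]
So $A$ itself is almost invariant under addition by $B$.

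The main point is then to get a similar bound on $|\rho A^{-1}+C|$ for a suitable small set $C$, using the assumed richness of the product. Here I would choose $C$ related to $B^{-1}$ (or $\rho^{-1}B^{-1}$): the set $A':=A\cap\rho A^{-1}$ has size $t$, and its inverse-dilate $\rho(A')^{-1}=\rho A'^{-1}$ is a subset of $A$, hence is contained in $X+B$. Inverting, $A'^{-1}\subseteq \rho^{-1}(X+B)$; but also $A'\subseteq A\subseteq X+B$, so $\rho A'^{-1}$ and $A'$ both live in small sumsets. The cleaner route: apply Theorem~\ref{thm:1} directly to the set $A'=A\cap\rho A^{-1}$ in place of $A$. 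Then $A'+B\subseteq A+B$ has size $\ll\sigma|A||B|^\kappa$ as above, and $\rho A'^{-1}+C\subseteq \rho A^{-1}+C$; since $\rho A'^{-1}\subseteq A\subseteq X+B$ (because $x\in A'\Rightarrow \rho x^{-1}\in A$), taking $C=B$ gives $\rho A'^{-1}+C\subseteq A+B$ too, so $|\rho A'^{-1}+C|\ll \sigma|A||B|^\kappa$ as well. Now Theorem~\ref{thm:1} with $\min(|B|,|C|)=|B|\ge p^\epsilon$ (after passing to a subset of $B$ of size exactly $p^{\epsilon'}$ for appropriate $\epsilon'$, or invoking the explicit inequality~\eqref{eq:32}) yields
\[
\min\!\bigl(\sqrt{p|A'|},\,|A'|p^{\delta}\bigr)\ll |A'+B|+|\rho A'^{-1}+C|\ll \sigma|A||B|^{\kappa}.
\]
If the minimum is $|A'|p^\delta$ we get $t=|A'|\ll \sigma|A||B|^\kappa p^{-\delta}$; if it is $\sqrt{p|A'|}$ we get $t\ll \sigma^2|A|^2|B|^{2\kappa}p^{-1}$. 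Taking the larger of the two cases gives exactly the claimed bound.

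The step I expect to require the most care is the bookkeeping around the hypothesis $\min(|B|,|C|)=p^\epsilon$: Theorem~\ref{thm:1} is stated with an \emph{equality}, whereas in the corollary we only have $|B|\ge p^\epsilon$, and we have set $C=B$. I would handle this by replacing $B$ with an arbitrary subset $B_0\subseteq B$ of size exactly $\lceil p^\epsilon\rceil$ — note $A\subseteq X+B$ does \emph{not} pass to $B_0$, so instead I should keep the full $B$ for the containment $A\subseteq X+B$ and only shrink the set fed into Theorem~\ref{thm:1}; concretely, use the explicit form~\eqref{eq:32}, whose parameter $k$ depends on $\log_{\min(|B|,|C|)}p$, and check that $|B|\ge p^\epsilon$ suffices to make $k\gg 1/\epsilon$ and hence $\delta(k)\ge\delta$ for the $\delta$ in the statement. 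A secondary technical point is verifying that the Plünnecke--Ruzsa step is legitimate when $X$ is not a group coset — but this is standard: $|X+B+B|\le|X|\cdot|B+B|$ is just the triangle-inequality-free bound $|X+S|\le|X||S|$ with $S=B+B$, which needs no structure on $X$. Once these are pinned down, the rest is the two-case split above.
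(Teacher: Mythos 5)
Your proposal is correct and follows essentially the same route as the paper: set $A_*=A\cap\rho A^{-1}$, bound $|A_*+B|$ and $|\rho A_*^{-1}+B|$ by $\sigma|A||B|^\kappa$ using $A\subseteq X+B$ together with the symmetry $A_*=\rho A_*^{-1}$ (your inclusion $\rho A_*^{-1}\subseteq A$ is the same observation), and then apply Theorem~\ref{thm:1}. The technical point you flag about $\min(|B|,|C|)=p^\epsilon$ vs.\ $|B|\ge p^\epsilon$ is a genuine loose end that the paper's proof also passes over silently, and your suggested fix via the explicit form~\eqref{eq:32} is the right way to close it.
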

\begin{proof}
  Put $A_*=A\cap \rho A^{-1}$.
Then
\[
|A_*+B|\leq |A+B|\leq |X+B+B|\leq |X||B+B|\leq \sigma|X||B| \ll \sigma |A||B|^\kappa.
\]
Since $A_*=\rho A_*^{-1}$, we have
\[
|A_*+B| + |\rho A_*^{-1}+B|\ll \sigma|A||B|^{\kappa}.
\]
By Theorem~\ref{thm:1}, 
\[
|A_*+B| + |\rho A_*^{-1}+B|\gg \min( \sqrt{p|A_*|},|A_*|p^{\delta}),
\]
where $\delta\leq b_0^{-1/\epsilon}$.
Combining the last two equations, we have the desired upper bound for $|A_*|$.
\end{proof}

\begin{cor}
  \label{cor:7}
There is a constant $b_0>1$ such that the following holds for all $\epsilon>0$ and $\delta\leq\frac 14 b_0^{-1/\epsilon}$. 

Suppose that $A,B\subseteq\F_p$, $|A+B|\leq\sigma|A|$, and $|B|\geq p^\epsilon$.

If $\rho\not=0$, then
\[
|A\cap \rho A^{-1}|\ll \max \left( \frac{\sigma^2|A|^2}p, \frac{\sigma|A|}{p^{\delta}} \right).
\]
\end{cor}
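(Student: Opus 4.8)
The plan is to imitate the proof of Corollary~\ref{cor:1}, but to feed the hypothesis into Theorem~\ref{thm:1} directly instead of routing it through a set of the form $X+B+B$. Here the bound $|A+B|\le\sigma|A|$ is already exactly the kind of estimate Theorem~\ref{thm:1} wants, so there is no need for the auxiliary set $X$; in particular one should not expect to recover this statement simply by quoting Corollary~\ref{cor:1} with $X=A$, since that would force $\kappa$ near $1$ (worsening the exponent of $|B|$) and would also demand the unavailable hypothesis $|B+B|\le\sigma|B|$.

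First I would set $A_*=A\cap\rho A^{-1}$ and observe that $A_*$ is invariant under the involution $\iota\colon x\mapsto\rho x^{-1}$, which is well defined on $A_*$ because every element of $A_*$ has the nonzero form $\rho a^{-1}$ with $a\in A$: one has $x\in A\iff\iota(x)\in\rho A^{-1}$ and $x\in\rho A^{-1}\iff\iota(x)\in A$, so $\iota$ permutes $A_*$, i.e. $\rho A_*^{-1}=A_*$. Then, since $A_*\subseteq A$, we get $|A_*+B|\le|A+B|\le\sigma|A|$, and the identity $\rho A_*^{-1}=A_*$ gives the same bound for $|\rho A_*^{-1}+B|$, hence
\[
|A_*+B|+|\rho A_*^{-1}+B|\ll\sigma|A|.
\]

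Next I would apply Theorem~\ref{thm:1} to $A_*$, $B$, $C=B$, and $\rho$ to obtain the matching lower bound
\[
|A_*+B|+|\rho A_*^{-1}+B|\gg\min\!\left(\sqrt{p|A_*|},\ |A_*|\,p^{\delta}\right)
\]
for $\delta\le\tfrac14 b_0^{-1/\epsilon}$ and $p$ large. Comparing the two displays yields $\min(\sqrt{p|A_*|},|A_*|p^{\delta})\ll\sigma|A|$, and splitting into two cases according to which term realizes the minimum gives $|A_*|\ll\sigma^2|A|^2/p$ or $|A_*|\ll\sigma|A|/p^{\delta}$, hence the stated bound on $|A\cap\rho A^{-1}|=|A_*|$.

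I do not expect any serious obstacle here: all the difficulty is packaged inside Theorem~\ref{thm:1}, and the present statement is just the short deduction above. The only point requiring a little care is the quantifier on $\epsilon$: Theorem~\ref{thm:1} is phrased with the equality $\min(|B|,|C|)=p^\epsilon$ rather than an inequality, so one should note — exactly as in the proof of Corollary~\ref{cor:1} — that $|B|\ge p^\epsilon$ suffices to invoke it with the stated value of $\delta$, using that $\tfrac14 b_0^{-1/t}$ is increasing in $t$ since $b_0>1$ (write $|B|=p^{\epsilon'}$ with $\epsilon'\ge\epsilon$, so $\delta\le\tfrac14 b_0^{-1/\epsilon}\le\tfrac14 b_0^{-1/\epsilon'}$).
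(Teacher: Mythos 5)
Your proof is correct, and since the paper gives no explicit proof of Corollary~\ref{cor:7} (it is stated immediately after Corollary~\ref{cor:1} with the proof left implicit as an easier variant), your argument is exactly the intended one: set $A_*=A\cap\rho A^{-1}$, use $A_*\subseteq A$ and the involution $\rho A_*^{-1}=A_*$ to get $|A_*+B|+|\rho A_*^{-1}+B|\le 2\sigma|A|$, and invoke Theorem~\ref{thm:1} with $C=B$. Your side remark is also right that quoting Corollary~\ref{cor:1} with $X=A$ would both weaken the $|B|$-dependence and require the unavailable hypothesis on $|B+B|$, and your handling of the ``$=p^\epsilon$'' versus ``$\ge p^\epsilon$'' quantifier via monotonicity of $t\mapsto b_0^{-1/t}$ is the correct way to match the hypotheses of Theorem~\ref{thm:1}.
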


%% THEOREM FOR SMALL  B
\begin{thm}
  \label{thm:2}
Fix $0<\tau< 1/8$.
Let $A, B,$ and $C$ be subsets of $\F_p$ such that $B=\{1,\ldots,M\}, C=\{1,\ldots, N\},$ and $1\leq |A|\leq p^{1-\delta}$, where $\delta=0.25\, b_0^{-1/\tau}$ for an absolute constant $b_0>1$.

If $p\gg 1$ and $11\leq\min(|B|,|C|)\leq p^{\tau}$
then
\[
|A+B|+|A^{-1}+C|\geq \frac{|A|}2 \left( \frac{\min(|B|,|C|)}{2} \right)^{\delta/\tau}.
\]
\end{thm}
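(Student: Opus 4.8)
The plan is to deduce Theorem~\ref{thm:2} from Theorem~\ref{thm:1} by the same mechanism used to prove~\eqref{eq:32}, but tracking the dependence of the saving exponent $\delta(k)$ on the parameter $k$, rather than just using the crude bound $\min(\sqrt{p|A|},|A|p^\delta)$. Recall from Theorem~\ref{thm:1} that if $W=(A+B)\cup(A^{-1}+C)$ (here $\rho=1$), then
\[
|A|\leq \frac{|W|^2}{p}+C_*|W|p^{-\delta(k)},\qquad \delta(k)=2^{-(k+2)},
\]
valid whenever $k\gg \log_{\min(|B|,|C|)}p$. Writing $m=\min(|B|,|C|)$ and choosing $k$ to be the least integer with $k\geq c\log_m p$ for the implied constant $c$, we have $p^{-\delta(k)}=p^{-2^{-(k+2)}}$; since $\log_m p = (\log p)/(\log m)$, we get $2^{k}\asymp (\log p/\log m)$ in the sense of being within a bounded factor, hence $p^{-\delta(k)}$ is of the shape $\exp(-c'(\log p)(\log m)/\log p)=m^{-c''}$ up to constants — that is, the saving is polynomial in $m$, which is exactly the shape $(m/2)^{\delta/\tau}$ appearing in the statement. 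The role of $\tau$ is to control $k$: the hypothesis $m\leq p^\tau$ forces $\log_m p\geq 1/\tau$, so $k$ can be taken $\gg 1/\tau$, and then $\delta/\tau$ emerges as the exponent after bookkeeping; the constant $b_0$ and the restriction $\delta=0.25\,b_0^{-1/\tau}$ come directly from the corresponding constraint $\delta\leq \tfrac14 b_0^{-1/\epsilon}$ in Theorem~\ref{thm:1} with $\epsilon=\tau$.

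The argument then proceeds by contradiction. Suppose $|A+B|+|A^{-1}+C|< \tfrac{|A|}{2}(m/2)^{\delta/\tau}$. Then $|W|\leq |A+B|+|A^{-1}+C|$ is small, and I substitute this bound into~\eqref{eq:32}. There are two regimes according to which term on the right-hand side of~\eqref{eq:32} dominates. If the first term $|W|^2/p$ dominates, then $|A|\ll |W|^2/p$, which combined with $|A|\leq p^{1-\delta}$ and the assumed smallness of $|W|$ gives $|W|\gg \sqrt{p|A|}$, contradicting $|W|< \tfrac{|A|}{2}(m/2)^{\delta/\tau}$ once one checks $(m/2)^{\delta/\tau}\ll \sqrt{p/|A|}$ — and here $|A|\leq p^{1-\delta}$ together with $m\leq p^\tau$ makes the needed inequality hold with room to spare. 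If instead the second term $C_*|W|p^{-\delta(k)}$ dominates, then $|A|\leq 2C_*|W|p^{-\delta(k)}$, i.e. $|W|\geq \frac{|A|}{2C_*}p^{\delta(k)}$; it then remains to verify that $p^{\delta(k)}\geq C_*(m/2)^{\delta/\tau}$ for the chosen $k$, which again reduces to the elementary comparison of exponents described in the first paragraph (this is where the factor $C_*\geq 6$, the lower bound $m\geq 11$, and the constant in $k\gg \log_m p$ all get absorbed). Either way we reach a contradiction, which proves the theorem.

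The main obstacle, and the only genuinely delicate point, is the exponent bookkeeping in the second regime: turning the statement ``$k$ is the least integer with $k\geq c\log_m p$'' into the clean lower bound $p^{\delta(k)}=p^{2^{-(k+2)}}\geq C_*(m/2)^{\delta/\tau}$. One has to be careful that taking $k$ to be an integer only costs a bounded multiplicative factor in $2^k$, hence a bounded power in the final estimate, and that the hypotheses $11\leq m\leq p^\tau$ and $p\gg 1$ give exactly enough slack to absorb all the absolute constants ($C_*$, $b_0$, the Vinogradov constant in $k\gg\log_m p$, and the factors of $2$) into the stated form. I also need to double-check the admissibility condition $1\leq |A|\leq p^{1-\delta}$ is what guarantees we are not in the trivial range where $|A|\leq |W|^2/p$ is automatic; this is precisely why that hypothesis is imposed. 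The rest is a routine case split, so once the exponent comparison lemma is pinned down the proof is short.
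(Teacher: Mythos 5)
Your plan to derive Theorem~\ref{thm:2} directly from equation~\eqref{eq:32} cannot work, and the exponent bookkeeping you flag as the ``delicate point'' is where it breaks. You write that ``we get $2^k\asymp \log p/\log m$,'' but equation~\eqref{eq:32} forces $k\asymp\log_m p=(\log p)/(\log m)$, so $2^k\asymp 2^{(\log p)/(\log m)}=p^{1/\log m}$, which is a power of $p$, not a logarithmic quantity. The conclusion you draw from the wrong identity --- that $p^{-\delta(k)}$ is of the shape $m^{-c''}$, polynomial in $m$ --- is therefore false. The correct computation gives $\delta(k)=2^{-(k+2)}\asymp 2^{-c(\log p)/(\log m)}$, so $p^{\delta(k)}=p^{2^{-c(\log p)/(\log m)}}$. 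For $m$ bounded and $p\to\infty$, this tends to $1$ (the exponent tends to $0$ super-polynomially), whereas the target $(m/2)^{\delta/\tau}$ with $\delta=\tfrac14 b_0^{-1/\tau}$ and $\tau$ \emph{fixed} is a fixed constant bigger than $1$. The two sides only match in the boundary case $m\asymp p^\tau$; for $11\le m\ll p^\tau$, which the theorem allows (and which is the regime used in the application to continued fractions), Theorem~\ref{thm:1} is strictly weaker than what you need. Equivalently: in Theorem~\ref{thm:1} the non-concentration parameter is $K=\min(|B|,|C|)$, so $k=3\log p/(c_*\log K)$ is tied to $\log_m p$ and cannot be chosen; there is no mechanism in that proof by which $\tau$ can be a free parameter decoupled from $m$.

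The paper's proof of Theorem~\ref{thm:2} is by a genuinely different mechanism: it invokes Theorem~\ref{thm:5} in place of Theorem~\ref{thm:15}/Theorem~\ref{thm:1}. Theorem~\ref{thm:5} exploits the fact that $B=\{1,\ldots,M\}$ and $C=\{1,\ldots,N\}$ are intervals: the corresponding transformations contain the set $T$ of Theorem~\ref{thm:3}, whose integer lifts $v^ju^{-j}$ freely generate a subgroup of $PSL_2(\Z)$ of rank $N_0$ (Corollary~\ref{cor:5}), and Margulis's girth bound (Theorem~\ref{thm:13}) then gives $d(\mathrm{Cay}(G,T))\gg\log_{|T|}p$. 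That girth bound feeds into Theorem~\ref{thm:11}, where the \emph{iterated} convolution $\mu^{(m)}$ of the uniform measure on $S$ is shown, via Lemmas~\ref{lem:3} and \ref{lem:4}, to satisfy non-concentration with the much larger parameter $K=|S|^{m/4}$, and $m$ can be taken as large as $\gamma/2\asymp\log_{|S|}p$. This is what makes $\tau$ a free parameter up to $1/8$ and yields a saving that is genuinely polynomial in $\min(|B|,|C|)$. Without the interval structure (and hence the free-subgroup/girth input), there is no route from Theorem~\ref{thm:1} to Theorem~\ref{thm:2}.
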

Theorem~\ref{thm:2} is proved in Section~\ref{sec:proofs-bounds-sums} using Theorem~\ref{thm:5}, stated below.
Our motivation for proving Theorem~\ref{thm:2} is the following corollary.
\begin{cor}
  \label{cor:2}
Suppose that $A\subseteq X+B$, where $A,X,B\subseteq \F_p$, $1\leq |A|\leq p^{1-\kappa}$, $B=\{1,\ldots,|B|\}$, and $|X|\leq \frac{|A|}{|B|}|B|^\kappa$.

If $p\gg 1$ and $11\leq |B|\leq p^{\tau}$, then for $0<\tau<1/8$ and $\kappa=0.25\, b_0^{-1/\tau}$ we have
\[
|A\cap A^{-1}|\leq 2^{4+\kappa/\tau} |A| |B|^{\kappa(1-1/\tau)}.
\]
\end{cor}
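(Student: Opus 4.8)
The plan is to follow the deduction of Corollary~\ref{cor:1} from Theorem~\ref{thm:1}, now using Theorem~\ref{thm:2} in place of Theorem~\ref{thm:1} and taking $\rho=1$. Set $A_*=A\cap A^{-1}$; we may assume $A_*$ is non-empty, since otherwise there is nothing to prove. The key structural observations are that $A_*=A_*^{-1}$ and that $B=\{1,\ldots,|B|\}$ is an interval, so $B+B=\{2,\ldots,2|B|\}$ has $|B+B|\le 2|B|$.

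First I would bound $|A_*+B|$ from above. Since $A_*\subseteq A\subseteq X+B$, we have $A_*+B\subseteq X+(B+B)$, hence
\[
|A_*+B|\le |X||B+B|\le 2|X||B|\le 2|A||B|^{\kappa},
\]
using the hypothesis $|X|\le \frac{|A|}{|B|}|B|^{\kappa}$. Because $A_*=A_*^{-1}$ and we take $C=B$, this yields
\[
|A_*+B|+|A_*^{-1}+C| = 2|A_*+B|\le 4|A||B|^{\kappa}.
\]

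Next I would apply Theorem~\ref{thm:2} to $A_*$ with $B=C=\{1,\ldots,|B|\}$, so that $\min(|B|,|C|)=|B|$. The hypotheses hold: $1\le |A_*|\le |A|\le p^{1-\kappa}=p^{1-\delta}$ since $\kappa=\delta=0.25\,b_0^{-1/\tau}$, and $11\le |B|\le p^{\tau}$ and $p\gg 1$ are assumed. Theorem~\ref{thm:2} then gives
\[
2|A_*+B| = |A_*+B|+|A_*^{-1}+C|\ge \frac{|A_*|}{2}\left(\frac{|B|}{2}\right)^{\delta/\tau}.
\]
Combining this with the upper bound $2|A_*+B|\le 4|A||B|^{\kappa}$ and rearranging,
\[
|A_*|\le 8\cdot 2^{\delta/\tau}\,|A|\,|B|^{\kappa-\delta/\tau} = 2^{3+\kappa/\tau}|A||B|^{\kappa(1-1/\tau)}\le 2^{4+\kappa/\tau}|A||B|^{\kappa(1-1/\tau)},
\]
which is the claimed bound.

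There is no serious obstacle here; the deduction is routine once Theorem~\ref{thm:2} is available. The only points requiring care are checking that the size hypothesis $|A_*|\le p^{1-\delta}$ of Theorem~\ref{thm:2} follows from $|A|\le p^{1-\kappa}$ together with the identification $\delta=\kappa$, that the interval structure of $B$ is what makes $|X+B+B|\le 2|X||B|$ work, and tracking the absolute constants through the final inequality (the slack between $2^{3}$ and $2^{4}$ comfortably absorbs the $-1$ in $|B+B|=2|B|-1$ and any similar rounding).
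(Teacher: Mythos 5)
Your proof is correct and follows essentially the same route as the paper: bound $|A_*+B|$ via $A_*\subseteq X+B$ and $|B+B|\leq 2|B|$, exploit $A_*=A_*^{-1}$, apply Theorem~\ref{thm:2} to $A_*$ with $C=B$ and $\delta=\kappa$, and rearrange. Your constant tracking ($2^{3+\kappa/\tau}$ before relaxing to $2^{4+\kappa/\tau}$) is in fact slightly tighter than the paper's, which loses an extra factor of $2$ in passing from Theorem~\ref{thm:2} to the displayed inequality.
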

\begin{proof}
Let $A_*=A\cap A^{-1}$.
We have $|A_*+B|\leq 2|A||B|^\kappa$.

Since $A_*=A^{-1}_*$, by Theorem~\ref{thm:2} with $\delta=\kappa$ and $0<\tau<1/8$ we have
\[
\frac{|A_*|}4 \left( \frac{|B|}{2} \right)^{\kappa/\tau}
\leq |A_*+B|+|A_*^{-1}+B| \leq 4|A||B|^\kappa.
\]
Hence
\[
|A_*|\leq 2^{4+\kappa/\tau} |A| |B|^{\kappa(1-1/\tau)}.
\]
% Since $\kappa = \frac 14 b_0^{-1/\tau}$
% and $\tau<1/8$, we need $\kappa < 0.25 \, b_0^{-8}$.
\end{proof}

% \begin{cor}
%   \label{cor:6}
% Suppose $A\subseteq X+B$ where $A,X,B\subseteq\F_p$, $|A|\leq p^{1-\kappa}$, $B=\{1,\ldots,|B|\}$, and $|X|\leq \frac{|A|}{|B|}|B|^\kappa$.

% Fix $\epsilon>0$.
% If $|B|=p^\epsilon$, where $0<\epsilon < 1/8$, and $\kappa\leq 0.25 b_0^{-1/\epsilon}$, then for $p\gg 1$
% \[
% |A\cap A^{-1}|\leq 2^{4+\kappa/\epsilon}|A|p^{-\kappa(1-\epsilon)}.
% \]
% \end{cor}
% Thus Corollary~\ref{cor:2} recovers Corollary~\ref{cor:1} when $|B|$ is a small power of $p$.

\section{Application to continued fractions with bounded partial quotients}
\label{sec:appl-cont-fract}

Here we discuss some problems of representing rational numbers by finite continued fractions.
By the Euclidean algorithm, a rational $a/q\in [0,1], (a,q)=1$ can be uniquely represented as a regular continued fraction
\begin{equation}\label{exe}
\frac{a}{q}=[0;b_1,\dots,b_s]=
\frac{1}{\displaystyle{b_1+\frac{1}{\displaystyle{b_2+
\frac{1}{\displaystyle{b_3 +\dots +
\displaystyle{\frac{1}{b_{s}}} }}}}}} ,\,\,\,\,\, b_s \ge 2.
\end{equation}

Assuming $q$ is known, we use $b_j(a), j=1,\ldots,s=s(a)$, to denote the partial quotients of $a/q$; that is,
\[
\frac aq := [ 0; b_1(a),\ldots,b_{s}(a)].
\]

\subsection{Zaremba's conjecture}

\newcommand{\zk}{\ensuremath{\mathfrak{k}}}

Zaremba's famous conjecture \cite{zaremba1972methode} posits that there is an absolute constant $\zk$ with the following property:
for any positive integer $q$ there exists $a$ coprime to  $q$ such that in the continued fraction expansion (\ref{exe}) all partial quotients are bounded:
\[
 b_j (a) \le \zk,\,\, 1\le j  \le s = s(a).
\]
In fact, Zaremba conjectured that $ \zk=5$.
For large prime $q$, even $ \zk=2$ should be enough, as conjectured by Hensley.

%In 50th 
Korobov \cite{korobov1963number} showed that for prime $q$  there exists  $a$,
$(a,q)=1$, such that
\[
\max_\nu b_\nu(a)\ll\log q.
\]
Such a result is also true for composite $q$.
Moreover, Rukavishnikova \cite{rukavishnikova2006probabilistic} proved 
that Korobov's bound holds with positive probability:
\[
\frac{1}{\varphi (q)}\left|\left\{ a\in \mathbb{Z}:\,\,
1\le a \le q,\,\, (a,q) = 1,\,\, \max_{1\le j\le s(a)} b_j (a) \ge T\right\}\right|
\ll \frac{\log q}{T}.
\]
The main results of Rukavishnikova's papers \cite{rukavishnikova2006probabilistic,rukavishnikova2011large} deal with the typical values of the sum of partial quotients  of fractions with a given denominator: she proves an analog of the law of large numbers. 

It is clear that Zaremba's conjecture is true when $q= F_n$ is the $n$-th  Fibonacci number.
Niederreiter \cite{niederreiter1986dyadic} proved that  Zaremba's conjecture is true for $q = 2^\alpha,3^\alpha, \,\, \alpha\in\mathbb{Z}_+$ with $\zk=3$, and for $q=5^\alpha$ with $\zk=4$.                                             
By means a quite similar argument Yodphotong and Laohakosol showed \cite{yodphotong2002proofs} that Zaremba's conjecture is true for $q=6^\alpha$ and $\zk=5$.
Komatsu \cite{komatsu2005zarembas} proved that Zaremba's conjecture is true for $q=7^{r2^r}, r =1,3,5,7,9,11$ and $\zk=3$.
Kan and Krotkova \cite{kan2011quantitative} obtained %different
lower bounds for the  number
\[
f=
|
\{ a\pmod{p^m}:\,\, a/p^m =[0;b_1,\dots. b_s],\,\, b_j \le p^n\}|
\]
of fractions with bounded partial quotients and the denominator of the form $p^n$. 
In particular they proved a bound of the form
\[ 
f \ge C(n)  m^\lambda,\,\,\, C(n), \lambda >0.
\]

Recently Bourgain and  Kontorovich \cite{bourgain2011zarembas,bourgain2014zarembas} made significant progress on Zaremba's conjecture.
Consider the set
\[
\mathfrak{Z}_k (N) :=\{
q\le N:\,\, \exists a \,\,\,\text{such that}\,\,\, (a,q) =1,\,\, \,\, a/q=[0;b_1,...,b_s],\,\,\, b_j \le k\}
\]
(so Zaremba's conjecture means that $\mathfrak{Z}_k (N)= \{1,2,...,N\}$).
In a wonderful paper \cite{bourgain2014zarembas} Bourgain and Kontorovich proved that for  $k$ large enough there exists positive $c=c(k)$ such that for $N$ large enough one has
\[
|\mathfrak{Z}_k(N  )|
= N- O(N^{1-c/\log\log N}).
\]
 
For example, it follows from this result that for $k$ large enough the set $\bigcup_N \mathfrak{Z}_k (N)$ contains infinitely many prime numbers.

Another result from \cite{bourgain2014zarembas} states that for $k=50$ the set
\begin{equation}\label{poop}
\bigcup_N \mathfrak{Z}_{50}(N)
\end{equation}
has positive density in $\mathbb{Z}_+$, that is
\[
|\mathfrak{Z}_{50} (N)| \gg N.
\]
 
This result was improved by Frolenkov and Kan \cite{kan2014strengthening,frolenkov2014strengthening,kan2015strengthening,kan2016strengthening,kan2017strengthening}, Huang \cite{huang2015improvement}, and Magee, Oh, and Winter \cite{magee2016uniform}.
In particular, in \cite{kan2016strengthening} Kan proved that the set (\ref{poop}) has positive density %proportion
in $\mathbb{Z}_+$ for $ k = 4$.
 
\subsection{Real numbers with bounded partial quotients}

By $F_M(Q)$  we denote the set of all rational numbers  $\frac{u}{v}, (u,v) = 1$ from $[0,1]$ with all partial quotients in (\ref{exe}) not exceeding $M$ and with $ v\le Q$:
\[
F_M(Q)=\left\{
\frac uv=[0;b_1,\ldots,b_s]\colon (u,v)=1, 0\leq u\leq v\leq Q, b_1,\ldots,b_s\leq M
\right\}.
\]
 By $F_M$ we denote the set of all irrational real numbers  from $[0,1]$ with partial quotients less than or equal to $M$.
From \cite{hensley1992continued} we know that the Hausdorff dimension $w_M$ of the set  $F_M$ satisfies
\begin{equation}
 w_M = 1- \frac{6}{\pi^2}\frac{1}{M} -
\frac{72}{\pi^4}\frac{\log M}{M^2} + O\left(\frac{1}{M^2}\right),
\,\,\, M \to \infty ,
 \label{HHD}
\end{equation}
however here we need simpler result from \cite{hensley1989distribution}, which states that
\begin{equation}\label{oop}
1-w_{M} \asymp  \frac{1}{M}
\end{equation}
with absolute constants in the sign $\asymp$.
Explicit estimates for $\dim F_M$  for certain values of $M$ can be found in \cite{jenkinson2004density}.
In the papers \cite{hensley1989distribution,hensley1990distribution} Hensley gives the bound
\begin{equation}
|F_M(Q)| \asymp_M Q^{2w_M}.\label{QLOW}
\end{equation}
For a fixed $N$ we consider the set 
\[
Z_M(N):= \left\{ a\in \{1,2,...,N-1\}\colon (a,N) = 1, \max_{1\leq j\leq s}b_j(a)\leq M\right\}
\]
of all positive integers $a$ less than $N$ so that the partial quotients of $a/N$ are all bounded by $M$.
For instance, Zaremba's conjecture is that for $M=5$ and all $N$, we have $|Z_M(N)|>0$.

In \cite{moshchevitin2007setsoftheform}, the first author used Hensley's bounds to show that \fxnote{is $p$ prime? In Nikolay's paper, $q$ is not necessarily prime (as far as I can tell) so maybe we should use $N$}
\begin{equation}
\label{ooopo}
|Z_M(p)|\ll_M p^{w_M}.
\end{equation}
Certain upper bounds for $|Z_M(p)|$ were obtained recently in  \cite{david2017equidistribution} by means of Dynamical Systems.
In the next subsection we improve on (\ref{ooopo}) in the case when $N=p$ is a prime number, and give an upper bound that is close to optimal.

\subsection{New results} 

For a prime $p$, we consider the set 
\[
 Z_M(p) =
\left\{a\in\{1,\ldots,p-1\}\colon \max_{1\leq j\leq s(a)}b_j(a) \leq M\right\}
 % \{
 % a\in \{1,2,...,p-1\}:
 % \,\,
 % \text{ in c.f.  expansion for}\,\, a/p
 % \,\,
 % \text{all partial quotients do not exceed }\,\, M\}.
\]
Our main new result is the following theorem.
\begin{thm}
\label{thm:7}
  Given positive $\varepsilon$ there exists $M_0 = M_0(\varepsilon)$ such that for all $ M\ge M_0$ one has
\[
|Z_M(p)|\ll_M p^{2w_M -1 +\varepsilon (1-w_M)}.
\]
\end{thm}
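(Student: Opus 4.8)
The plan is to combine the ``inverse--closed'' self--similar structure of $Z_M(p)$ forced by the symmetry of continued fractions with the popular--product bounds of Section~\ref{sec:bounds-sums-recipr}, using Hensley's estimate (\ref{QLOW}) to control the auxiliary sets; the numerology is arranged by letting an auxiliary parameter $\tau$, and hence $\kappa$, tend to $0$ as $M\to\infty$.

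First I would record the continued fraction input. For $a\in Z_M(p)$ write $a/p=[0;b_1,\dots,b_s]$ with all $b_j\le M$ and convergents $p_j/q_j$; fix a scale $n$ (eventually just below $p^{1/2}$) and take $k$ with $q_k\le n<q_{k+1}$, so $q_k\asymp_M n$ since $q_{j+1}\le(M+1)q_j$. Standard continuant identities give $p=q_kc+q_{k-1}d$ and $a=p_kc+p_{k-1}d$, where $K(\cdot)$ denotes the continuant, $c=K(b_{k+1},\dots,b_s)$ and $d=K(b_{k+2},\dots,b_s)$; hence $c\asymp_M p/n$, $1\le d<c$, $\gcd(c,d)=1$, $d/c=[0;b_{k+1},\dots,b_s]\in F_M$ and $q_{k-1}/q_k=[0;b_k,\dots,b_1]\in F_M$, and modulo $p$ one has $aq_k\equiv\pm d$ and $aq_{k-1}\equiv\mp c$. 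Reversing the whole expansion shows $\pm a^{-1}\bmod p$ again lies in $Z_M(p)$ (up to the usual boundary convention for the last partial quotient), so the symmetric set $W:=\big(Z_M(p)\cup(-Z_M(p))\big)\bmod p$ satisfies $W^{-1}=W$ and $|Z_M(p)|\le|W|\le2|Z_M(p)|$.

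Next, from $|a/p-p_k/q_k|\le 1/(q_kq_{k+1})\ll_M n^{-2}$ we get $a=p\,p_k/q_k+O_M(p/n^2)$, so $W\subseteq X+B$ with $B=\{1,\dots,\lceil C_Mp/n^2\rceil\}$ and $X$ a set of integers, one for each convergent fraction $p_k/q_k$ that actually occurs; by (\ref{QLOW}) the number of $F_M$--fractions with denominator $\asymp_M n$ is $\ll_M n^{2w_M}$. Feeding this crude bound on $|X|$ into Corollary~\ref{cor:2} (or~\ref{cor:1}, \ref{cor:7}) with $n\approx p^{1/2}$ does not improve on the known estimate (\ref{ooopo}), and at the critical scale $n\asymp p^{1/2}$ the interval $B$ degenerates to length $O_M(1)$. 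So the crux is that $p_k/q_k$ occurs only if it \emph{extends} to a full expansion of denominator exactly $p$, i.e.\ only if there is $d/c\in F_M$ with $q_kc+q_{k-1}d=p$; since by Step~1 each admissible prefix fraction has $O_M(1)$ compatible suffix fraction at the critical scale, the number of admissible fractions is $\asymp_M|Z_M(p)|$, and proving the theorem is equivalent to bounding that number by $p^{2w_M-1+\varepsilon(1-w_M)}$ --- a saving of roughly $p^{1-w_M}$ over the naive Hensley count.

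To extract that saving I would exploit that the extension relation $q_kc+q_{k-1}d=p$ is bilinear in the two $F_M$--data pairs; passing to $\F_p$ via $aq_k\equiv\pm d$, $aq_{k-1}\equiv\mp c$ this is naturally attacked through the popular--product and rich--linear--transformation circle of ideas of Section~\ref{sec:bounds-sums-recipr}, using that $W=W^{-1}$ and that the relevant convergent numerators and denominators lie in short intervals (so they have sumsets of size $O_M(\sqrt p)$ with any interval $B=\{1,\dots,m\}$, $m\le p^{1/2}$, well above the threshold $p^\epsilon$). Concretely, given $\varepsilon>0$ one fixes $\tau=\tau(\varepsilon,M)\in(0,1/8)$ small enough that $\kappa=0.25\,b_0^{-1/\tau}$ is below a suitable multiple of $1-w_M\asymp M^{-1}$ (possible since $\kappa\to0$ as $\tau\to0$, using (\ref{oop})), which makes the size condition $|W|\ll_M p^{w_M}\le p^{1-\kappa}$ of Corollary~\ref{cor:2} hold via (\ref{ooopo}); optimizing $\tau$ in terms of $\varepsilon$ then produces the exponent $2w_M-1+\varepsilon(1-w_M)$ with an $M$--dependent constant, for $p$ large. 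I expect this last quantitative step to be the main obstacle: the naive Hensley count only recovers (\ref{ooopo}) and near the critical scale the auxiliary interval is essentially trivial, so everything rests on the genuinely bilinear, sum--product--type estimate for the number of convergent fractions that extend to denominator exactly $p$ --- which is precisely what the popular--product results of Section~\ref{sec:bounds-sums-recipr} are built to deliver.
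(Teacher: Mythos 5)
Your proposal correctly isolates the two structural inputs: (i) the inversion-closedness over $\F_p$ coming from the palindromic symmetry of the continued fraction of $a^*/p$, and (ii) the containment of the relevant set in $X+B$ with $X$ a Hensley-sized set of ``rounded convergents'' and $B$ a short interval. But the proposal diverges from a workable argument at exactly the two places where the paper does something different, so there is a genuine gap.

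The most serious issue is the parameter regime. You propose to let $\tau\to 0$ as $M\to\infty$, so that $\kappa = 0.25\,b_0^{-1/\tau}$ falls below $1-w_M$ and the size hypothesis of Corollary~\ref{cor:2} is satisfied. But the saving produced by Theorem~\ref{thm:2} (and hence by Corollary~\ref{cor:2}) comes through the exponent $\delta/\tau = (0.25\,b_0^{-1/\tau})/\tau$, which itself tends to $0$ as $\tau\to 0$: you are degenerating the very bound you need. The paper does the opposite: it fixes $\tau \approx \varepsilon$ (precisely $\tau = 1-2\beta+2\log_p(M+1)$ with the scale parameter $\beta = (1-\varepsilon)/2$) and then requires $M$ large enough that $1-w_M \asymp 1/M \leq \tfrac 14 b_0^{-1/\tau}$, so $M_0(\varepsilon)$ grows roughly like $b_0^{1/\varepsilon}$ as $\varepsilon\to 0$. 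Relatedly, you diagnose that the interval $B$ degenerates at the critical scale $n\asymp p^{1/2}$ and then retreat to a ``bilinear extension count'' heuristic that you flag as unresolved. The paper never performs such a count. It simply moves the scale slightly below critical, taking $\beta = (1-\varepsilon)/2$, so that $\mathcal{B}_\beta$ has length about $(M+1)^2 p^{\varepsilon}$, and then works with the enlarged set $A_M(p) \supseteq Z_M(p)$ (for which $A_M(p)=A_M(p)^{-1}$ holds exactly, with no boundary cases). The containment $A_M(p)\subseteq\mathcal{A}_\beta+\mathcal{B}_\beta$ combined with Hensley gives the upper bound $|A_M(p)+\mathcal{B}_\beta|\ll_M p^{1-2\beta(1-w_M)}$; Theorem~\ref{thm:2} is applied directly with $B=C=\mathcal{B}_\beta$ (avoiding the hypothesis $|X|\leq \tfrac{|A|}{|B|}|B|^\kappa$ of Corollary~\ref{cor:2}, which is awkward to check here), and solving the resulting inequality for $|A_M(p)|$ yields the exponent $2w_M-1+\varepsilon(1-w_M)$. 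So the fix you were searching for is the sub-critical scale plus a direct application of Theorem~\ref{thm:2}, not an extension-counting refinement of Hensley's bound.
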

For large values of $M$, the exponent here is close to the optimal exponent $2w_M -1$ that was conjectured in lecture \cite{moshchevitin2017problems}.
One can see that Theorem~\ref{thm:7} improves the bound (\ref{ooopo}) from \cite{moshchevitin2007setsoftheform}.
Some related problems are discussed in the preprint \cite{moshchevitin2012problems}.

Before proving Theorem~\ref{thm:7}, we introduce some auxiliary sets.

Recall that if 
\[
\frac aq = [0; b_1,\ldots,b_n],
\]
then the \emph{$k$th convergent}\/ to $a/q$ is $[0;b_1,\ldots,b_k]$.
We use $u_k$ and $v_k$ to denote coprime integers such that
\[
\frac{u_k}{v_k}=[0;b_1,\ldots,b_k].
\]
When $q$ is understood, we will write $u_k(a)$ and $v_k(a)$ for the convergents $u_k(a)/v_k(a)$ to $a/q$.

The integers $u_k$ and $v_k$ satisfy the following recursion relations: $u_0=0,u_1=1$, and for $k\geq 1$
\[
u_{k+1}=b_{k+1}u_k+u_{k-1},
\]
and $v_0=1,v_1=b_1$, and for $k\geq 1$
\[
v_{k+1}=b_{k+1}v_k+v_{k-1}.
\]
In addition, we have the following error bound for approximating $a/q$ by its convergents:
\begin{equation}
  \label{eq:68}
  \left|\frac aq -\frac{u_k}{v_k} \right| < \frac 1{v_kv_{k+1}}< \frac 1{v_k^2}.
\end{equation}
See \cite[Chapter X]{hardy1954introduction} or \cite[Chapter 1]{vinogradov1954elements} for further properties of continued fractions and convergents.

Let
\[
A=A_M(p)=\{a\in\{1,\ldots,p-1\}\colon \mbox{$b_k(a)\leq M$ for all $k$ such that $v_k(a)\leq\sqrt p$}\}.
\]
That is, $A$ is the set of $a$ such that the partial quotients of all convergent fractions $u/v$ to $a/p$ with $v\leq \sqrt p$ are at most $M$.
Note that $Z_M(p)\subseteq A_M(p)$, and that every convergent $u/v$ to $a/p$, with $a\in A_M(p)$ and $v\leq\sqrt p$ is contained in $Z_M(\sqrt p)$.
Further, the set $A$ has an involution defined by $a\mapsto a^*$, where $aa^*\equiv 1\pmod p$, so when we consider $A$ as a subset of $\F_p$, we have $A=A^{-1}$.

More precisely, if
\[
 \frac{a}{p}=[0;b_1,b_2\dots,b_s],
\]
 with $b_s \ge 2$ then for the inverse element  $a^{*}$ modulo $p$
 defined by $ aa^{*} \equiv 1\pmod{p}$ we have \cite{rukavishnikova2006probabilistic,rukavishnikova2011large}:
\[
 \frac{a^{*}}{p}=[0;b_s,b_{s-1}\dots,b_1] \qquad \mbox{if $s$ is even}
\]
\[
  \frac{a^{*}}{p}=[0; 1,b_s-1,b_{s-1}\dots,b_1] \qquad\mbox{if $s$ is odd}.
\]
%with respect to the parity of $s$ 

% Here we should note that if
%  $$
%  \frac{a}{p}=[0;b_1,b_2\dots,b_s]
%  $$
%  with $b_s \ge 2$ then for the inverse element  $a^{*}$ modulo $p$
%  defined by $ aa^{*} \equiv 1\pmod{p}$ we have
%  $$
%  \frac{a^{*}}{p}=[0;b_s,b_{s-1}\dots,b_1]
%  \,\,\,
%  \text{or}
%  \,\,\,
%   \frac{a^{*}}{p}=[0; 1,b_s-1,b_{s-1}\dots,b_1]
% $$
% with respect to parity. So in the case $  a\in Z_M
%  (p) $ all the partial quotients of the numbers $ \frac{a}{p}$ and $\frac{a^*}{p}$ are bounded by $M$.
%  This leads to
% $$
%  Z_M
%  (p)
%  \subset
%   A  \cap A^{-1}.
%   $$

\newcommand{\abeta}{\ensuremath{\mathcal{A}_\beta}}

Now we take $\beta$ from the range
\[
  0<\beta \leq\frac{1}{2}
\]
and consider the set
\[
\abeta=\left\{
a\colon \mbox{$\exists \frac uv\in F_M(p^\beta)$ such that $a=\floor{p\frac uv}$}
\right\}.
\]
(Recall that $\frac uv\in F_M(p^\beta)$ if $0\leq u\leq v\leq p^\beta, (u,v)=1$, and all partial quotients of $\frac uv$ are less than $M$.)

\begin{lem}
  \label{lem:5}
For $0<\beta\leq 1/2$, the map $\frac uv\mapsto \floor{p\frac uv}$ from $F_M(p^\beta)$ to $\abeta$ is bijective.
Hence $|\abeta|=|F_M(p^\beta)|\approx_M p^{2\beta\omega_M}$.
\end{lem}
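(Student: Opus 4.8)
The plan is to show that the map $\phi\colon \frac uv\mapsto \floor{p\frac uv}$ is injective on $F_M(p^\beta)$; surjectivity onto $\abeta$ is automatic from the definition of $\abeta$, and then the cardinality statement follows from Hensley's bound \eqref{QLOW} applied with $Q=p^\beta$, giving $|\abeta|=|F_M(p^\beta)|\asymp_M (p^\beta)^{2w_M}=p^{2\beta w_M}$. So the whole content is injectivity. First I would take two distinct fractions $\frac uv\neq\frac{u'}{v'}$ in $F_M(p^\beta)$ and suppose for contradiction that $\floor{p\frac uv}=\floor{p\frac{u'}{v'}}=:a$. Then both $p\frac uv$ and $p\frac{u'}{v'}$ lie in the interval $[a,a+1)$, so $\bigl|p\frac uv-p\frac{u'}{v'}\bigr|<1$, i.e.
\[
\left|\frac uv-\frac{u'}{v'}\right|<\frac 1p.
\]
On the other hand, since the two fractions are distinct and in lowest terms, $|uv'-u'v|\geq 1$, so
\[
\left|\frac uv-\frac{u'}{v'}\right|=\frac{|uv'-u'v|}{vv'}\geq\frac 1{vv'}\geq\frac 1{p^{2\beta}}\geq\frac 1p,
\]
using $v,v'\leq p^\beta$ and $\beta\leq 1/2$. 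This contradicts the previous inequality, so $\phi$ is injective.

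One subtlety I should address: the boundary case $\beta=1/2$ forces $vv'\leq p$, which only gives $\bigl|\frac uv-\frac{u'}{v'}\bigr|\geq \frac 1{vv'}\geq \frac 1p$, not a strict inequality, so the two bounds are merely consistent rather than contradictory. To handle this cleanly I would either (i) observe that equality $vv'=p$ is impossible since $p$ is prime and $v,v'\leq p^{1/2}<p$ forces $v=v'=\sqrt p\notin\mathbb Z$; or more robustly (ii) note that $\floor{p\frac uv}=a$ together with $p$ prime and $v<p$ means $p\frac uv$ is never an integer, so in fact $p\frac uv\in(a,a+1)$ is a strict containment for each of the two fractions, hence $\bigl|p\frac uv-p\frac{u'}{v'}\bigr|<1$ with no loss, while the chain above gives $\geq 1$ when $vv'\leq p$ — wait, that still only gives $\geq 1$ weakly. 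The safest fix is simply to require the bijection statement with the open bound and use that $|uv'-u'v|\geq 1$ combined with $vv' < p$ strictly when $\beta<1/2$, and for $\beta=1/2$ to use the primality argument in (i). I would write the argument so that the prime hypothesis on $p$ is invoked explicitly here.

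The main obstacle is thus entirely this boundary case $\beta=1/2$, which is exactly the case needed for the application (the set $A=A_M(p)$ involves convergents with $v\leq\sqrt p$), so it cannot be dismissed. The resolution via primality of $p$ is short but must be stated; everything else is the two-line mediant/Farey estimate. After injectivity, I would close by citing \eqref{QLOW}: $|F_M(p^\beta)|\asymp_M p^{2\beta w_M}$, and note that the notation $\omega_M$ in the statement is the same as $w_M$ elsewhere in the paper.
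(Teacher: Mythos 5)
Your core argument is correct and is exactly the paper's: two distinct reduced fractions with denominators at most $p^\beta$ differ by at least $1/(vv')\geq p^{-2\beta}\geq 1/p$, whereas two fractions mapping to the same floor differ by strictly less than $1/p$, giving a contradiction. The paper's proof is the same three-line Farey estimate followed by an appeal to \eqref{QLOW}.

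However, the ``subtlety'' you then raise is a non-issue, and your proposed fix is unnecessary. You worry that when $\beta=1/2$ the lower bound $\bigl|\frac uv-\frac{u'}{v'}\bigr|\geq 1/p$ is not strict and so is ``merely consistent'' with the upper bound. But your upper bound \emph{is} strict: if $\floor{pu/v}=\floor{pu'/v'}=a$, then both $pu/v$ and $pu'/v'$ lie in the half-open interval $[a,a+1)$, whence $\bigl|\frac uv-\frac{u'}{v'}\bigr|<1/p$. A strict ``$<1/p$'' together with a weak ``$\geq 1/p$'' is already a contradiction. There is no boundary case, and no appeal to the primality of $p$ is required here; the argument works for any integer denominator $N$ with $\beta\leq 1/2$. (You are right, though, that the $\omega_M$ appearing in the lemma statement is the same quantity as $w_M$ elsewhere; that is just an inconsistency of notation in the paper.)
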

\begin{proof}
By definition, the map $\frac uv\mapsto\floor{p\frac uv}$ from $F_M(p^\beta)$ to $\abeta$ is surjective.
For $0<\beta\leq 1/2$, this map is also injective, since for distinct $\frac{u}{v},\frac{u'}{v'}  \in F_{M}(p^\beta)$ we have
\[
 \left|
 \frac{u}{v}-\frac{u'}{v'} 
\right|\ge \frac{1}{vv'} \geq \frac{1}{p},
\]
hence different $ \frac{u}{v}$ give different $a$.

It follows immediately that $|\abeta | = |F_{M}(p^\beta)|$.
By \eqref{QLOW}, $|F_M(p^\beta)|\asymp_{M} p^{2\beta w_M}$.
\end{proof}

\newcommand{\bbeta}{\ensuremath{\mathcal{B}_\beta}}

Now we define the set of consecutive integers
\[
\bbeta = \left\{ 0, \pm 1,\pm 2 ,\dots, \pm \floor{(M+1)^2p^{1-2\beta}+1}\right\}.
\]

\begin{lem}
  \label{lem:9}
For $A,\abeta,$ and $\bbeta$ defined as above, we have $A\subseteq \abeta+\bbeta$.
\end{lem}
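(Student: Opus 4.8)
The plan is to use continued fractions: for each $a\in A$ I will produce a fraction $u/v\in F_M(p^\beta)$ — a suitable convergent of $a/p$ — for which $\lfloor p\,u/v\rfloor$ differs from $a$ by at most the radius of $\bbeta$, so that $a=\lfloor p\,u/v\rfloor+(a-\lfloor p\,u/v\rfloor)\in\abeta+\bbeta$. Fix $a\in A$, write $a/p=[0;b_1,\dots,b_s]$ with convergents $u_k/v_k$, and let $k$ be the largest index with $v_k\le p^\beta$; since $v_0=1\le p^\beta$ and $v_s=p>p^\beta$, such a $k$ exists and $0\le k<s$, so $v_{k+1}>p^\beta$ is again a convergent denominator and $b_{k+1}$ is defined. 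First I would check that $u_k/v_k\in F_M(p^\beta)$: indeed $0\le u_k\le v_k\le p^\beta$ and $(u_k,v_k)=1$, and since $v_j\le v_k\le p^\beta\le\sqrt p$ for all $j\le k$, the defining property of $A$ forces $b_j\le M$, so every partial quotient of $u_k/v_k=[0;b_1,\dots,b_k]$ is at most $M$. (Minor point: to match the normalization $b_s\ge2$ in the definition of $F_M$, if $b_k=1$ one passes to $u_{k-1}/v_{k-1}$, which changes the denominator by at most a factor $2$, a loss absorbed into the constant in $\bbeta$.) In particular $\lfloor p\,u_k/v_k\rfloor\in\abeta$.

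Next I would estimate the error. By \eqref{eq:68} we have $|a/p-u_k/v_k|<1/(v_kv_{k+1})$, hence
\[
\bigl|a-\lfloor p\,u_k/v_k\rfloor\bigr|\ \le\ p\,\bigl|a/p-u_k/v_k\bigr|+1\ <\ \frac{p}{v_kv_{k+1}}+1 .
\]
Now I split on the size of the next partial quotient. If $b_{k+1}\le M$, then $v_{k+1}=b_{k+1}v_k+v_{k-1}\le(M+1)v_k$, so $v_k\ge v_{k+1}/(M+1)>p^\beta/(M+1)$, and $p/(v_kv_{k+1})<(M+1)p^{1-2\beta}$. If $b_{k+1}>M$, then since $a\in A$ we must have $v_{k+1}>\sqrt p$ (otherwise the defining condition of $A$ would force $b_{k+1}\le M$), so $p/(v_kv_{k+1})<p/\sqrt p=\sqrt p$. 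In either case, for $p$ large (and $\beta$ in the given range, so that $\sqrt p$ stays below $(M+1)^2p^{1-2\beta}$) the error is at most $\lfloor (M+1)^2p^{1-2\beta}+1\rfloor$, the largest element of $\bbeta$; hence $a-\lfloor p\,u_k/v_k\rfloor\in\bbeta$ and therefore $a\in\abeta+\bbeta$. (The degenerate case $k=0$, i.e.\ $b_1>p^\beta$, is the second case with $u_0/v_0=0/1\in F_M(p^\beta)$.)

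The main obstacle is the second case: when the partial quotient $b_{k+1}$ immediately past scale $p^\beta$ is large, $v_k$ can be far smaller than $p^\beta$ and $a/p$ may have no convergent with denominator comparable to $p^\beta$. What rescues the estimate is precisely the definition of $A$: boundedness of the partial quotients down to scale $\sqrt p$ forces a large $b_{k+1}$ to carry $v_{k+1}$ beyond $\sqrt p$, and then $u_k/v_k$ is such a good approximation to $a/p$ that $\lfloor p\,u_k/v_k\rfloor$ still lands within $O(\sqrt p)$ of $a$. A secondary nuisance is reconciling $u_k/v_k\in F_M(p^\beta)$ with the $b_s\ge2$ normalization, but this only costs a bounded factor.
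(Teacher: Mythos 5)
Your proof follows the same overall plan as the paper's: take the last convergent $u_k/v_k$ of $a/p$ with $v_k\le p^\beta$, observe that all its partial quotients are at most $M$ (because $v_j\le p^\beta\le\sqrt p$ for $j\le k$, so the defining condition of $A$ applies), and then bound $\bigl|a-\lfloor p\,u_k/v_k\rfloor\bigr|$. Up to Case~1 your argument is fine and in fact slightly sharper than the paper's, since you use $|a/p-u_k/v_k|<1/(v_kv_{k+1})$ with both $v_k>p^\beta/(M+1)$ and $v_{k+1}>p^\beta$, getting the constant $M+1$ rather than $(M+1)^2$.

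The genuine gap is in Case~2, and you have noticed the right obstacle but proposed a false resolution. When $b_{k+1}>M$ you correctly deduce $v_{k+1}>\sqrt p$ and hence the error bound $p/(v_kv_{k+1})+1<\sqrt p/v_k+1\le\sqrt p+1$. You then assert that ``for $\beta$ in the given range, $\sqrt p$ stays below $(M+1)^2p^{1-2\beta}$.'' But the given range is $0<\beta\le1/2$, and the inequality $\sqrt p\le(M+1)^2p^{1-2\beta}$ is equivalent to $p^{2\beta-1/2}\le(M+1)^2$, which \emph{fails} for every fixed $M$ once $\beta>1/4$ and $p$ is large. This is exactly the regime used later: in the proof of Theorem~\ref{thm:7} one takes $\beta=(1-\varepsilon)/2$, which is close to $1/2$, so $(M+1)^2p^{1-2\beta}=(M+1)^2p^{\varepsilon}$ is vastly smaller than $\sqrt p$. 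Concretely, $a$ with $a/p=[0;2,b_2,\dots]$ and $\sqrt p<v_2\ll p^{1-2\beta+1/2}$ lies in $A$ for $M\ge 2$, yet $|a-\lfloor p/2\rfloor|$ can be on the order of $\sqrt p/2$, far outside the radius $\lfloor(M+1)^2p^{1-2\beta}+1\rfloor$ of $\bbeta$, and one checks (using the gap in the Cantor set $F_M$ around $1/2$) that no other point of $\abeta$ is closer. So the error in Case~2 is not absorbed by $\bbeta$, and the proof as written does not establish the inclusion for $\beta$ near $1/2$.

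For comparison, the paper's own proof does not split into cases at all: it simply asserts that ``every rational $a/p$ with $a\in A$ must have a convergent fraction $u/v$ from $F_M(p^\beta)$ with $v\ge p^\beta/(M+1)$,'' which is precisely the Case~1 conclusion and requires $b_{k+1}\le M$ — something that the definition of $A$ only guarantees when $v_{k+1}\le\sqrt p$. So you have in fact put your finger on a step the paper glosses over; the issue is that your proposed repair (an inequality restricting $\beta$) does not work in the relevant range, and some additional idea is needed to handle the $a\in A$ whose convergent denominators jump from below $p^\beta$ to above $\sqrt p$.
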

\begin{proof}
The denominators of convergents  $\frac{u_\nu}{v_\nu}$ satisfy the relation
\[
 v_\nu < v_{\nu+1} = b_{\nu+1} v_\nu +v_{\nu-1} \le (b_{\nu+1}+1) v_\nu.
\]
So for any rational $\frac{a}{p}$ with partial quotients $\le M$ and for any $\lambda$  from the interval $ M+1\le \lambda \le p$ there exists a convergent fraction  $\frac{u}{v}$ to $\frac{a}{p}$ with
$ \frac{\lambda}{M+1} \le  v \le \lambda $. We see that every   rational $\frac{a}{p}$ with $ a\in A$ must have a convergent fraction $\frac{u}{v}$ from $F_{M}(p^\beta)$  with $ v \ge \frac{p^\beta}{M+1}$ and for this convergent fraction one has
\[
\left|\frac{a}{p} - \frac{u}{v}\right| \le\frac{1}{v^2} \le \frac{(M+1)^2}{p^{2\beta}}.
\]
This observation implies
\[
\left| a - \floor{p\frac uv}\right| \leq (M+1)^2p^{1-2\beta}+1,
\]
which leads to the desired inclusion $A \subseteq \abeta +\bbeta$.
\end{proof}

\begin{proof}[Proof of Theorem~\ref{thm:7}]
Recall that $|\abeta|\asymp_M p^{2\beta \omega_M}$ and
\[
|\bbeta|=2\floor{(M+1)^2p^{1-2\beta}+1}+1.
\]

Since $A\subseteq \abeta+\bbeta$ and
\[
\bbeta+\bbeta\subseteq \left\{0,\pm \floor{(M+1)^2p^{1-2\beta}+1}\right\}+\bbeta,
\]
we have
\[
|A+\bbeta|\leq |\abeta +\bbeta+\bbeta| \leq 3|\abeta+\bbeta|\leq 3|\abeta||\bbeta|\ll_M p^{1-2\beta(1-\omega_M)}.
\]

Since $A=A^{-1}$, we have 
\[
|A+\bbeta|+|A^{-1}+\bbeta|\ll_M p^{1-2\beta (1-\omega_M)}.
\]
By Theorem~\ref{thm:2} with $\tau=1-2\beta +2\log_p(M+1)$ and $\delta=1-\omega_M$, we have
\begin{equation}
  \label{eq:46}
p^{(1-2\beta)(1-\omega_M)/\tau}|A|\ll_M  p^{1-2\beta (1-\omega_M)}
\end{equation}
provided that
\[
\delta\leq\frac 14 b_0^{-1/\tau}.
\]

Thus
\[
|A|\ll_M p^{\omega_M+(1-2\beta)(1-\omega_M)(1-\tau^{-1})}.
\]

Now we choose $\beta = \frac{1-\varepsilon}{2}$, so that
\[
|A|\ll_M p^{\omega_M+\epsilon(1-\omega_M)(1-1/(\epsilon+2\log_p(M+1)))}\ll_M p^{2\omega_M-1+\epsilon(1-\omega_M)},
\]
provided that
\[
1-\omega_M \leq \frac 14 b_0^{-1/(\epsilon+2\log_p(M+1))}.
\]
For $p$ sufficiently large, it suffices to take $\epsilon>0$ so that
\[
\frac 1M\asymp 1-\omega_M \leq \frac 14 b_0^{-0.9/\epsilon},
\]
which is roughly
\[
\epsilon\gg \frac 1{\log M}.
\]

% Then we consider Corollary 5 and take $M$ large enough to satisfy
% \[
% \kappa = 1- w_M  < 0.25 b_0^{-1/\varepsilon}
% \]
% (from (\ref{oop}) it is clear that $\kappa \asymp\frac{1}{M}$). 

% Thus
% \[
% |Z_M(p)|\ll_M p^{1-2\kappa + \varepsilon\kappa} = p^{2w_M -1 +\varepsilon (1-w_M)},
% \]
% as claimed.
\end{proof}

% Suppose $A\subseteq X+B$ where $A,X,B\subseteq\F_p$, $|A|\leq p^{1-\kappa}$, $B=\{1,\ldots,|B|\}$, and $|X|\leq \frac{|A|}{|B|}|B|^\kappa$.

% Fix $\epsilon>0$.
% If $|B|=p^\epsilon$, where $0<\epsilon < 1/8$, and $\kappa\leq 0.25 b_0^{-1/\epsilon}$, then for $p\gg 1$
% \[
% |A\cap A^{-1}|\leq 2^{4+\kappa/\epsilon}|A|p^{-\kappa(1-\epsilon)}.
% \]

\section{Bounds for rich linear fractional transformations}
\label{sec:bounds-rich-linear}

We begin with some basic facts on subgroups and quotients of the group $GL_2(\F)$ of $2\times 2$ invertible matrices with entries in $\F$.
The \emph{special linear group}\/ $SL_2(\F)$ consists of elements of $GL_2(\F)$ with unit determinant.

The group $GL_2(\F)$ acts on the \emph{projective line}\/ $\prob^1(\F)$ by \emph{linear fractional transformations}.
Informally, $\prob^1(\F)=\F\cup\{\infty\}$ is the affine line $\F$ plus a point at infinity.
A linear fractional transformation is a map of the form
\begin{equation}
  \label{eq:3}
  x\mapsto \frac{ax+b}{cx+d},
\end{equation}
where
\begin{equation}
  \label{eq:3b}
    \begin{pmatrix}
    a & b\\
    c & d \\
  \end{pmatrix}
\end{equation}
is an element of $GL_2(\F)$.
If $x=\infty$, then $x\mapsto a/c$.
By abuse of notation, we may use the matrix in equation (\ref{eq:3b}) to denote the transformation in (\ref{eq:3}).
%For the rest of this section, all actions on $P^1(\F)$ are by linear fractional transformations.

Clearly we may restrict the action (\ref{eq:3}) to $SL_2(\F)$.
A transformation acts trivially if and only if it is in the center $Z=\{\lambda I\colon \lambda\in\F^*\}$ of $GL_2(\F)$
The \emph{projective general linear group}\/ $PGL_2(\F)=GL_2(\F)/Z$ is the automorphism group of $\prob^1(\F)$ and the \emph{projective special linear group}\/ $PSL_2(\F)=SL_2(\F)/\{\pm I\}$ is a subgroup of $PGL_2(\F)$ \cite[Section 10.8]{borel1969linear}.
If every element of $\F^*$ is a square then $PSL_2(\F)=PGL_2(\F)$; otherwise, the index of $PSL_2(\F)$ in $PGL_2(\F)$ is 2.

The group $PGL_2(\F)$ acts \emph{simply 3-transitively}\/ on $\prob^1(\F)$, meaning that for every pair of triples $(x,y,z)$ and $(x',y',z')$ of distinct points in $\prob^1(\F)$, there is a unique transformation $g\in PGL_2(\F)$ such that
\[
g(x,y,z)=(x',y',z').
\]
The first proof of this for a general field $\F$ is due to Grothendieck, see \cite[Section 10.8]{borel1969linear}.
By a direct computation, one can show that $PSL_2(\F)$ acts doubly transitively on $\prob^1(\F)$.

% rich linear fractional transformations as hyperbola incidence problem
The graphs of linear fractional transformations define hyperbolas in $\F\times \F$:
\begin{equation}
  \label{eq:45}
  y=\frac{ax+b}{cx+d} \iff cxy+ ax+ cy + d =0.
\end{equation}
If $g$ is the linear fractional transformation corresponding to the left-hand side of (\ref{eq:45}),
% \[
%   \begin{pmatrix}
%     a & b\\
%     c & d \\
%   \end{pmatrix},
% \]
let $\Gamma_g$ denote the curve in $\F\times\F$ defined by $cxy+ax+cy+d=0$.
If $S\subseteq PSL_2(\F_p)$ and $Y\subseteq \F_p$, we may define the number of incidences between $P=Y\times Y$ and the set of hyperbolas $\Gamma_g$ with $g$ in $S$ by
\[
I(Y\times Y, S):=|\{(x,y,g)\in Y\times Y\times S\colon (x,y)\in \Gamma_g\}|.
\]
Note that
\[
I(Y\times Y,S)=\sum_{g\in S}|Y\cap gY|.
\]
The following theorem can be thought of as a bound for the number weighted incidences between a set of hyperbolas and a Cartesian product point set.
\begin{thm}
  \label{thm:15}
Let $\nu$ be a probability measure on $G=SL_2(\F_p)$ such that
\begin{enumerate}
\item $\norm{\nu}_\infty \leq K^{-1}$
\item for all $g\in G$ and all proper subgroups $\Gamma\leq G$, we have $\nu( g\Gamma)\leq K^{-1}$.
\end{enumerate}
Then for any set $Y\subseteq\prob^1(\F_p)$ and any element $z\in GL_2(\F_p)$, there are absolute constants  $c_*\in(0,1)$ and $C_*\geq 6$ such that
\[
\left|
\sum_g (\delta_z\ast\nu)(g)|Y\cap gY|-\frac{|Y|^2}p
\right|
\leq C_*|Y| p^{-\delta},
\]
where $\delta_z(x)=1$ if $x=z$ and $\delta_z(x)=0$ otherwise,
\[
k =\frac{3\log p}{c_*\log K},
\]
and \fxnote{Sigma-8: are we worried that we use $\delta$ as a constant and as the function $\delta_z$?}
\[
\delta=\delta(k)=\frac 1{2^{k+2}}.
\]
\end{thm}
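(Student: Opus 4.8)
The plan is to reduce the weighted incidence count to an $\ell^2$-flattening estimate for the measure $\nu$ acting on $\prob^1(\F_p)$, and then to extract the main term $|Y|^2/p$ by a spectral-gap-type argument. First I would rewrite the target sum using the convolution structure. Writing $\phi=\mathbf 1_Y$ as a function on $\prob^1(\F_p)$ and recalling that $|Y\cap gY|=\sum_x \phi(x)\phi(g^{-1}x)=\langle \phi, g\cdot\phi\rangle$ in the natural action, we have
\[
\sum_g (\delta_z\ast\nu)(g)\,|Y\cap gY| = \sum_x \phi(x)\,\bigl((\delta_z\ast\nu)\ast\phi\bigr)(x) = \langle \phi,\ (\delta_z\ast\nu)\ast\phi\rangle,
\]
so everything is controlled by how close the averaging operator $T_{\delta_z\ast\nu}\colon\phi\mapsto(\delta_z\ast\nu)\ast\phi$ is to the orthogonal projection onto the constants. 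The constant function $\mathbf 1$ is an eigenfunction (the action on $\prob^1$ is measure-preserving up to the uniform measure), contributing exactly $\langle\phi,\mathbf 1\rangle^2/p = |Y|^2/p$. The error term is $\langle \phi, T_{\delta_z\ast\nu}\phi_0\rangle$ where $\phi_0=\phi-\tfrac{|Y|}{p}\mathbf 1$ is the mean-zero part; by Cauchy--Schwarz this is at most $\|\phi_0\|_2\cdot\|T_{\delta_z\ast\nu}\phi_0\|_2\leq |Y|^{1/2}\cdot\|T_{\delta_z\ast\nu}\restriction_{\mathbf 1^\perp}\|_{op}\cdot|Y|^{1/2}$, so it suffices to show the operator norm of $T_{\delta_z\ast\nu}$ restricted to mean-zero functions is $\leq C_*p^{-\delta}$.

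The key step is to bound that operator norm, and this is where the $\ell^2$-flattening result of Section~\ref{sec:ell2-flatt-energ} enters. Since left-translation by $z$ is an isometry on $\ell^2$, $\|T_{\delta_z\ast\nu}\restriction_{\mathbf 1^\perp}\| = \|T_\nu\restriction_{\mathbf 1^\perp}\|$. Now I would iterate: $\|T_\nu^k\restriction_{\mathbf 1^\perp}\| = \|T_{\nu^{(k)}}\restriction_{\mathbf 1^\perp}\|$ where $\nu^{(k)}$ is the $k$-fold convolution power, and the flattening estimate says that under hypotheses (1)--(2) on $\nu$ — the $\|\nu\|_\infty\leq K^{-1}$ bound together with non-concentration on cosets of proper subgroups — the measure $\nu^{(k)}$ becomes nearly uniform on $PSL_2(\F_p)$ once $k\asymp \log p/\log K$, i.e.\ $\|\nu^{(k)}-\mathrm{Haar}\|_2$ is tiny, which translates into $\|T_{\nu^{(k)}}\restriction_{\mathbf 1^\perp}\|\leq p^{-1/2+o(1)}$ or similar. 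Unwinding, one gets $\|T_\nu\restriction_{\mathbf 1^\perp}\| \leq (\text{small})^{1/k}$, which after optimizing yields a bound of shape $p^{-\delta}$ with $\delta = 2^{-(k+2)}$ for $k = 3\log p/(c_*\log K)$ — the doubly-exponential loss in $\delta$ is the price of extracting a power saving from a flattening statement by taking $k$-th roots. The constants $c_*\in(0,1)$ and $C_*\geq 6$ come from the flattening theorem and the Cauchy--Schwarz step respectively.

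One technical point I would handle carefully: the action is on $\prob^1(\F_p)$, which has $p+1$ points, not $p$, and $SL_2$ acts through $PSL_2$; the main term in the statement is $|Y|^2/p$, so I need to track whether the relevant normalization is by $p$ or $p+1$ and absorb the discrepancy (an $O(|Y|^2/p^2)\leq O(|Y|/p)$ or $O(|Y|)$ term) into the error $C_*|Y|p^{-\delta}$ — this is harmless but must be checked. Also, the flattening results are naturally stated for $\nu$ as a measure on the group; I must make sure hypothesis (2), non-concentration on all cosets $g\Gamma$ of proper subgroups $\Gamma\leq SL_2(\F_p)$, is exactly the input the flattening theorem of Section~\ref{sec:ell2-flatt-energ} requires (in particular covering the Borel and normalizer-of-torus subgroups, which are the obstructions to expansion). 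The main obstacle is thus not the reduction — that is soft Fourier/operator analysis — but rather correctly invoking and, if necessary, re-deriving the precise quantitative $\ell^2$-flattening statement with explicit dependence of $k$ on $K$ and $p$ so that the final exponent $\delta=2^{-(k+2)}$ matches; getting the iteration bookkeeping right so that the error genuinely collapses to $C_*|Y|p^{-\delta}$ rather than something weaker is the delicate part.
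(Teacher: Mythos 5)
Your high-level plan — split off the contribution of the constant function, reduce the error to an operator norm on the mean-zero subspace, and bound that by feeding the non-concentration hypotheses into the $\ell^2$-flattening machinery — is the same idea the paper uses. But there is a genuine gap in the unwinding step, and it is exactly the point that the paper's argument is structured to avoid.

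You write that $\|T_\nu^k\restriction_{\mathbf 1^\perp}\|=\|T_{\nu^{(k)}}\restriction_{\mathbf 1^\perp}\|$ (true) and then that ``unwinding'' gives $\|T_\nu\restriction_{\mathbf 1^\perp}\|\le(\text{small})^{1/k}$. That last implication requires $T_\nu$ to be normal. For a self-adjoint operator $S$ one indeed has $\|S^m\|=\|S\|^m$, but for a general operator $\lim_m\|T^m\|^{1/m}$ is only the spectral radius, which can be strictly smaller than $\|T\|$. The hypotheses of the theorem do not assume $\nu$ is symmetric, so $T_\nu$ need not be self-adjoint, and your step does not go through. Relatedly, the flattening theorem you invoke (Theorem~\ref{thm:8}) is stated for a \emph{symmetric} probability measure, so you cannot apply it to $\nu^{(k)}$ directly either.

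The fix — and this is what the paper does — is to symmetrize: set $\eta=\nu^{\sim}\ast\nu=(\delta_z\ast\nu)^{\sim}\ast(\delta_z\ast\nu)$, which is a symmetric probability measure still satisfying $\|\eta\|_\infty\le K^{-1}$ and $\eta(g\Gamma)\le K^{-1}$ (by Young / submultiplicativity). On the operator side this corresponds to passing from $T_\nu$ to $T_\nu^*T_\nu=T_\eta$, for which $\|T_\nu\|^2=\|T_\eta\|$ and $\|T_\eta^m\|=\|T_\eta\|^m$. Concretely, the paper's first Cauchy--Schwarz gives $\sigma_*^2\le|Y|\,\langle f_Y,\eta\ast f_Y\rangle$, and then iterating Cauchy--Schwarz \emph{with the self-adjoint} $\eta$ yields $\sigma_*^{2^{k+1}}\le|Y|^{2^{k+1}-1}\langle f_Y,\eta^{(2^k)}\ast f_Y\rangle$, at which point Corollary~\ref{cor:1b} and Theorem~\ref{thm:8} finish the job. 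Your appeal to left-translation by $z$ being an isometry is correct, but the remaining asymmetry of $\nu$ is the real obstruction and must be handled by passing to $\nu^{\sim}\ast\nu$ rather than raw powers of $\nu$. Your bookkeeping remark about $p$ versus $p+1$ is on point and is indeed absorbed as you suggest.
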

As a corollary, we have the following incidence bound, originally proved by Bourgain \cite{bourgain2012modular} and used by Bourgain, Gamburd, and Sarnak to prove that a certain graph related to Markov triples is connected \cite{bourgain2016markoff,bourgain2016markoff-a}.
\begin{cor}[Bourgain's hyperbola incidence bound]
  \label{cor:4}
Given $Y\subseteq\prob^1(\F_p)$ and $S\subseteq G= PSL_2(\F_p)$ such that
\begin{itemize}
\item $|S|\geq p^\epsilon$, \fxnote{we can weaken this to $|S|\geq |Y|^\epsilon$ if we use Corollary~\ref{cor:8}}
\item for all $g\in G$ and all proper subgroups $\Gamma\leq G$ we have $|S\cap g\Gamma|\leq |S|^{1-\eta}$,
\end{itemize}
we have
\[
\left| I(Y\times Y, S) - \frac{|S||Y|^2}p\right| \leq C_*|Y||S|p^{-\delta},
\]
where $\delta = 2^{-(k+2)}$ and $k=3(c_*\eta\epsilon)^{-1}$.
\end{cor}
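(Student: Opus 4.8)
The plan is to deduce Corollary~\ref{cor:4} from Theorem~\ref{thm:15} by taking $\nu$ to be the uniform probability measure on a lift of $S$ to $SL_2(\F_p)$ and choosing $z=I$, so that $\delta_z\ast\nu=\nu$.

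First I would fix a set $\tilde S\subseteq SL_2(\F_p)$ of representatives for the elements of $S$, one in each coset of $\{\pm I\}$; then the quotient map $\pi\colon SL_2(\F_p)\to PSL_2(\F_p)$ restricts to a bijection $\tilde S\to S$ and $|\tilde S|=|S|$. Let $\nu(g)=|S|^{-1}$ for $g\in\tilde S$ and $\nu(g)=0$ otherwise. I claim the hypotheses of Theorem~\ref{thm:15} hold with $K=|S|^{\eta}$ (we may assume $0<\eta\le 1$, since for $\eta>1$ the hypothesis of the corollary forces $S$ to be empty and there is nothing to prove). Hypothesis (1) holds because $\norm{\nu}_\infty=|S|^{-1}\le |S|^{-\eta}=K^{-1}$. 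For hypothesis (2), fix $g\in SL_2(\F_p)$ and a proper subgroup $\Gamma\le SL_2(\F_p)$. Since $\pi$ is injective on $\tilde S$,
\[
\nu(g\Gamma)=\frac{|\tilde S\cap g\Gamma|}{|S|}=\frac{|\pi(\tilde S\cap g\Gamma)|}{|S|}\le\frac{|S\cap\pi(g)\pi(\Gamma)|}{|S|}.
\]
The key point is that $\pi(\Gamma)$ is a \emph{proper} subgroup of $PSL_2(\F_p)$: if $\pi(\Gamma)=PSL_2(\F_p)$ then $\Gamma\{\pm I\}=SL_2(\F_p)$, so $[SL_2(\F_p):\Gamma]\le 2$, which is impossible for $p\ge 5$ because $SL_2(\F_p)$ is perfect and hence has no subgroup of index $2$. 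Thus the coset-concentration hypothesis of the corollary applies to $\pi(g)\pi(\Gamma)$ and gives $|S\cap\pi(g)\pi(\Gamma)|\le|S|^{1-\eta}$, whence $\nu(g\Gamma)\le|S|^{-\eta}=K^{-1}$.

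With the hypotheses verified, applying Theorem~\ref{thm:15} with $z=I$ yields
\[
\left|\sum_g\nu(g)\,|Y\cap gY|-\frac{|Y|^2}{p}\right|\le C_*|Y|\,p^{-\delta},\qquad \delta=2^{-(k+2)},\quad k=\frac{3\log p}{c_*\log K}.
\]
Since the action of $SL_2(\F_p)$ on $\prob^1(\F_p)$ factors through $PSL_2(\F_p)$, we have $|Y\cap gY|=|Y\cap\pi(g)Y|$, hence $\sum_g\nu(g)|Y\cap gY|=|S|^{-1}\sum_{s\in S}|Y\cap sY|=|S|^{-1}I(Y\times Y,S)$ by the identity $I(Y\times Y,S)=\sum_{s\in S}|Y\cap sY|$ recorded above. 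Multiplying through by $|S|$ gives $\bigl|I(Y\times Y,S)-|S||Y|^2/p\bigr|\le C_*|Y||S|p^{-\delta}$. Finally, $|S|\ge p^\epsilon$ gives $\log K=\eta\log|S|\ge\eta\epsilon\log p$, so $k\le 3(c_*\eta\epsilon)^{-1}$; as $\delta=2^{-(k+2)}$ decreases in $k$, the bound remains valid with $k$ replaced by the stated value $3(c_*\eta\epsilon)^{-1}$, completing the proof.

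The real content is entirely in Theorem~\ref{thm:15}, so I do not expect a serious obstacle; the only point demanding care is the passage between $SL_2(\F_p)$ and $PSL_2(\F_p)$, i.e.\ showing that control on how much $S$ concentrates in cosets of proper subgroups of $PSL_2(\F_p)$ controls how much the lift $\tilde S$ concentrates in cosets of proper subgroups of $SL_2(\F_p)$. This rests on the elementary fact that $SL_2(\F_p)$ has no index-$2$ subgroup for $p\ge 5$. One should also confirm the routine bookkeeping that turns $|S|\ge p^\epsilon$ into the stated value of $k$ and uses the monotonicity of $\delta$ in $k$.
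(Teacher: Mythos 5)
Your proof is correct and follows the same route as the paper, which simply applies Theorem~\ref{thm:15} with $K=p^{\eta\epsilon}$ (giving $k=3(c_*\eta\epsilon)^{-1}$ exactly). You instead take $K=|S|^{\eta}$ and then invoke $|S|\ge p^{\epsilon}$ together with monotonicity of $\delta$ in $k$ --- a harmless bookkeeping variant --- and you spell out the small $SL_2$-versus-$PSL_2$ passage (lifting $S$ to a transversal $\tilde S$, and checking that a proper subgroup of $SL_2(\F_p)$ projects to a proper subgroup of $PSL_2(\F_p)$ since $SL_2(\F_p)$ is perfect for $p\ge 5$) that the paper's one-line proof leaves implicit.
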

\begin{proof}
  Apply Theorem~\ref{thm:15} with $K=p^{\eta\cdot\epsilon}$.
\end{proof}

The following bound applies when we know more structural information about the set of linear fractional transformations.
\begin{thm}
  \label{thm:5}
There is an absolute constant $b_0>1$ such that the following holds for all $0<\alpha<1$, all sufficiently large primes $p\gg 1$, and all $0<\tau\leq 1/8$.

Let $B=\{1,\ldots,M\}$ and let $C=\{1,\ldots, N\}$.
Suppose that $5\leq\min(M,N)\leq p^\tau$.

Set
\[
S =
\left\{
  \begin{pmatrix}
    1 & -b \\
    c & 1-bc \\
  \end{pmatrix}
\colon
b\in B, c\in C
\right\}.
\]
Let $\delta=0.25 b_0^{-1/\tau}$ and
let $Y\subseteq\prob^1(\F_p)$ be a subset of size $1\leq |Y| \leq p^{1-\delta}$.

If $|Y\cap gY|\geq \alpha|Y|$ for all $g$ in $S$, then
\[
\min(M,N) \leq 2\left( \frac 2\alpha \right)^{\tau/\delta}+1.
\]
\end{thm}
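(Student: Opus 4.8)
The plan is to turn the richness hypothesis into a statement about an $\ell^2$-flattened random walk on $SL_2(\F_p)$ and then apply Theorem~\ref{thm:15}. Write $m=\min(M,N)$ (after intersecting $B$ and $C$ with $\{1,\dots,p-1\}$, which leaves $m$ unchanged, we may assume $M,N<p$), and note the factorization
\[
 \begin{pmatrix} 1 & -b \\ c & 1-bc\end{pmatrix}=\begin{pmatrix} 1 & 0 \\ c & 1\end{pmatrix}\begin{pmatrix} 1 & -b \\ 0 & 1\end{pmatrix},
\]
so that $S=LU$ is a product of subsets of the two opposite unipotent subgroups, with $|S|=MN$. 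Let $\mu$ be the uniform probability measure on $S\cup S^{-1}$; it is symmetric. First I would verify that $\mu$ is non-concentrated on cosets of proper subgroups, i.e.\ that hypothesis~(2) of Theorem~\ref{thm:15} holds for $\mu$ with $K\asymp m$. For a proper subgroup $\Gamma\le SL_2(\F_p)$ containing no nontrivial unipotent — a torus, a torus normalizer, or one of the bounded exceptional subgroups (for $p\gg1$ none of these has an element of order $p$) — this follows from the identity that any two elements of $S$ with the same value of $b$, or with the same value of $c$, have quotient of trace $2$, hence a nontrivial unipotent, which cannot lie in $\Gamma$; so $|S\cap g\Gamma|\le m$. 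For unipotent subgroups and Borel subgroups one instead uses that a Borel coset $g\mathcal B$ is cut out by a single linear condition on a column of the matrix, which for fixed $b$ (or fixed $c$) again leaves at most one element of $S$, so $|S\cap g\Gamma|\le\max(M,N)$. In every case $\mu(g\Gamma)\le 1/m$, and $\|\mu\|_\infty\le 1/m$.

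Next I would flatten. Choose an even integer $j\asymp\tau\log p/\log m$, small enough that $(m+2)^{4j}<p$; then two length-$\le 2j$ words in the elementary matrices generating $S\cup S^{-1}$ that agree modulo $p$ already agree over $\mathbb Z$ (bound the operator norm of the difference of integer lifts), so the walk has room to spread inside $SL_2(\F_p)$. By the $\ell^2$-flattening result of Section~\ref{sec:ell2-flatt-energ} — the engine behind Theorem~\ref{thm:15} — applied to $\mu$ with non-concentration parameter $\asymp m$, the convolution $\nu:=\mu^{\ast j}$ satisfies $\|\nu\|_\infty\le K^{-1}$ and $\nu(g\Gamma)\le K^{-1}$ for all $g$ and all proper $\Gamma$, with $K\ge p^{\gamma\tau}$ for a suitable absolute constant $\gamma$; the whole point of taking $j\asymp\tau\log p/\log m$ is to reach $K=p^{\gamma\tau}$ while staying inside the injective range $j\ll\log p/\log m$. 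Then, choosing the absolute constant $b_0$ so that $\tfrac14 b_0^{-1/\tau}$ equals $\delta(k)=2^{-(k+2)}$ with $k=3\log p/(c_*\log(p^{\gamma\tau}))=3/(c_*\gamma\tau)$ (a constant), and applying Theorem~\ref{thm:15} to $\nu$ with $z$ the identity (so $\delta_z\ast\nu=\nu$), I would obtain — using $|Y|\le p^{1-\delta}$ to bound $|Y|^2/p\le|Y|p^{-\delta}$ —
\[
 \sum_g \nu(g)\,|Y\cap gY|\ \le\ \frac{|Y|^2}p+C_*|Y|p^{-\delta}\ \le\ 2C_*|Y|p^{-\delta}.
\]

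Then I would show that richness survives the convolution. Put $f=\mathbf 1_Y$ and let $T_\mu\phi=\mu\ast\phi$ act on $\ell^2(\prob^1(\F_p))$; since $\mu$ is symmetric, $T_\mu$ is self-adjoint with $\|T_\mu\|\le1$ and $T_\mu\mathbf 1=\mathbf 1$. Because $|Y\cap gY|=|Y\cap g^{-1}Y|$, the hypothesis gives $|Y\cap gY|\ge\alpha|Y|$ for all $g\in S\cup S^{-1}$, so $\langle f,T_\mu f\rangle=\sum_g\mu(g)|Y\cap gY|\ge\alpha|Y|$. Write $f=f_0+f_1$ with $f_0$ the projection onto constants, so $\|f_0\|_2^2=|Y|^2/(p+1)$. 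If $p^{\delta}<2/\alpha$ then $p$, hence $m\le p^\tau$, is already bounded in terms of $\alpha$ as the theorem claims, so I may assume $p^{\delta}\ge 2/\alpha$; then $\|f_0\|_2^2\le|Y|p^{-\delta}\le\tfrac12\alpha|Y|$, giving $\langle f_1,T_\mu f_1\rangle\ge\tfrac12\alpha\|f_1\|_2^2$. Diagonalizing $T_\mu$ on $\mathbf 1^\perp$ and applying the power-mean inequality to its eigenvalues (legitimate because $j$ is even) yields $\langle f_1,T_\mu^{j}f_1\rangle\ge(\alpha/2)^{j}\|f_1\|_2^2$, hence
\[
 \sum_g\nu(g)\,|Y\cap gY|=\langle f,T_\mu^{j}f\rangle=\|f_0\|_2^2+\langle f_1,T_\mu^{j}f_1\rangle\ \ge\ (\alpha/2)^{j}|Y|.
\]
Comparing this with the upper bound above gives $(\alpha/2)^{j}\le 2C_*p^{-\delta}$, i.e.\ $j\log(2/\alpha)\ge\delta\log p-\log(2C_*)$. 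Substituting $j\asymp\tau\log p/\log m$ (one checks a proportionality constant at most $\tfrac12$ suffices), dividing by $\log p$, and using $p\gg1$, this becomes $\log m\le(\tau/\delta)\log(2/\alpha)$; together with $\min(M,N)\le p^\tau$ (used to clear the remaining power of $p$) and the choice of $b_0$, this is exactly $m\le 2(2/\alpha)^{\tau/\delta}+1$.

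The hard part will be the flattening step: making the $\ell^2$-flattening quantitative for a generating set $S$ that may be tiny (as small as $25$ elements), so that the non-concentration parameter $K$ of $\mu^{\ast j}$ is a genuine power of $p$, while keeping $j\asymp\tau\log p/\log m$ small enough that the relevant matrix products remain distinct modulo $p$. One must couple the flattening rate, the injectivity range $j\ll\log p/\log m$, and the hypothesis $\min(M,N)\le p^\tau$ so that the exponent in the conclusion comes out as $\tau/\delta$ with $\delta$ independent of $p$; by contrast the non-concentration of $\mu$ itself and the propagation of richness are comparatively routine. It is also essential to symmetrize — to work with $S\cup S^{-1}$ and not $S$ — because $S^{j}$ by itself stays concentrated on unipotent cosets, whereas $\mu^{\ast j}$ for the symmetric $\mu$ does not.
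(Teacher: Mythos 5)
Your overall architecture — establish non-concentration of the uniform measure $\mu$ on $S\cup S^{-1}$, flatten $\mu$ by convolution using a girth/injectivity range, propagate richness through the convolution by a spectral argument, then invoke Theorem~\ref{thm:15} — is the same shape as the paper's (which goes through Theorem~\ref{thm:11}). Your non-concentration argument for $\mu$ itself (quotients of elements of $S$ with the same $b$ or $c$ are nontrivial unipotents) and your propagation-of-richness step (decompose $f=f_0+f_1$, diagonalize the self-adjoint averaging operator, and use convexity of $x\mapsto x^j$ for even $j$) both look correct. The gap is in the flattening step, and it is not merely technical.

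You argue that choosing $j$ so that $(m+2)^{4j}<p$ guarantees that short words in the integer lifts of $S\cup S^{-1}$ collide mod $p$ only if they collide over $\mathbb Z$, and conclude that ``the walk has room to spread.'' But injectivity of $\phi_p$ on short words is only half of what is needed: for the Kesten-type return-probability bound underlying the flattening (Lemma~\ref{lem:3}), the Cayley graph of $\langle S\rangle$ over $\mathbb Z$ must itself be locally free up to that length — i.e.\ distinct \emph{words} of length $\le 2j$ must give distinct matrices in $SL_2(\mathbb Z)$. For the full set $S$ this fails badly. Taking $b=c=1$ gives $g=\left(\begin{smallmatrix}1&-1\\1&0\end{smallmatrix}\right)$, whose characteristic polynomial is $\lambda^2-\lambda+1$, so $g^3=-I$ and $g$ has order $3$ in $PSL_2(\mathbb Z)$; taking $b=1,c=2$ gives a trace-zero element of order $2$. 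So $\langle S\rangle$ has torsion, the girth of $\mathrm{Cay}(\langle S\rangle,S\cup S^{-1})$ over $\mathbb Z$ is at most $6$, and words of length $3$ already collide. Without a girth bound you cannot invoke Lemma~\ref{lem:3} to get $\|\mu^{(j)}\|_\infty\lesssim(2/|S|)^{j/2}$; and you cannot simply feed $K\asymp m$ into Theorem~\ref{thm:8} either, since its bound $C_*^kK^{-c_*k}=(C_*K^{-c_*})^k$ is vacuous unless $K>C_*^{1/c_*}$, and with $C_*\ge 6$ and $c_*<1$ the absolute constant $m\ge 5$ is not known to clear that threshold.

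This is exactly the obstruction the paper overcomes by not using $S$ as its Cayley generating set. It passes to the subset $T\subseteq S$ of elements with $b=c=2j$, namely $T=\bigl\{\left(\begin{smallmatrix}1&-2j\\2j&1-4j^2\end{smallmatrix}\right)\colon 1\le j\le\lfloor\min(M,N)/2\rfloor\bigr\}$. Over $\mathbb Z$ one has $\left(\begin{smallmatrix}1&-2j\\2j&1-4j^2\end{smallmatrix}\right)=v^ju^{-j}$ with $u=\left(\begin{smallmatrix}1&2\\0&1\end{smallmatrix}\right)$ and $v=\left(\begin{smallmatrix}1&0\\2&1\end{smallmatrix}\right)$, which generate a free subgroup of $\Gamma(2)\le PSL_2(\mathbb Z)$, and the algebraic fact that $\{a b, a^2 b^2,\ldots,a^n b^n\}$ freely generates a rank-$n$ free subgroup of $F_2$ (Theorem~\ref{thm:111}, Magnus--Karrass--Solitar) gives that $T$ freely generates a free group of rank $|T|$. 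Margulis's girth bound (Theorem~\ref{thm:13}) then yields $d(\mathrm{Cay}(G,T))\gtrsim\log_{|T|}p$, after which Theorem~\ref{thm:11} (which packages Lemma~\ref{lem:3}, Lemma~\ref{lem:4}, and Theorem~\ref{thm:15}) closes the argument. Your write-up is missing precisely this step — exhibiting a torsion-free, free-generating subset of $S$ — and without it the ``flatten to $K=p^{\gamma\tau}$ with $j\asymp\tau\log p/\log m$'' claim is not justified.
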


\section{Proofs of bounds for sums of reciprocals}
\label{sec:proofs-bounds-sums}

In this section, we prove Theorems~\ref{thm:1} and \ref{thm:2} using the results from the previous section.

\subsection{Proof of Theorem~\ref{thm:1}}

Before proving Theorem~\ref{thm:1}, we state some classification results for the subgroups of $SL_2(\F_p)$, then state a key lemma, which states that the matrices relevant to Theorem~\ref{thm:1} do not concentrate in subgroups.

\subsubsection{Subgroups of $SL_2(\F_p)$}

Let $\B$ denote the \emph{standard Borel subgroup}\/ of $SL_2(\F_p)$:
\[
\B =
\left\{
  \begin{pmatrix}
    a & b\\
    0 & d\\
  \end{pmatrix}
\colon a,b,d\in\F_p, ad=1
\right\}.
\]
We use $\B'$ to denote the projection of $\B$ to $PSL_2(\F_p)$.

Dickson \cite{dickson1901theory,dickson1958linear} classified the subgroups of $SL_2(\F_p)$ and $PSL_2(\F_p)$, see \cite[Theorem 6.17, Theorem 6.25]{suzuki1982group}.
\begin{thm}[Dickson]
\label{thm:6}
Let $p\geq 5$ be a prime.
Every proper subgroup of $PSL_2(\F_p)$ is \emph{isomorphic}\/ to one of the following groups:
\begin{enumerate}
\item the dihedral groups of order $p\pm 1$ and their subgroups,
\item the  \emph{standard Borel subgroup}\/ $\B'$ of $PSL_2(\F_p)$ and its subgroups,
\item $A_4,S_4,A_5$.
\end{enumerate}

Further, every proper subgroup of $SL_2(\F_p)$ is \emph{isomorphic}\/ to one of the following groups:
\begin{enumerate}
\item the dihedral groups of order $2(p\pm 1)$  and their subgroups,
\item the dicyclic groups of order $4p,4(p\pm1)$ and their subgroups,
\item the \emph{standard Borel subgroup}\/ $\B$ of upper triangular matrices, and its subgroup,
\item a finite group of order at most 120.
\end{enumerate}
\end{thm}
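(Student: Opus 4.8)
Theorem~\ref{thm:6} is Dickson's classical classification, so my plan is to reconstruct the standard proof through the action of $G=PSL_2(\F_p)$ on $\prob^1(\F_p)$. First I would collect the structural facts recalled earlier in this section: for $p\geq 5$, $G$ acts sharply $3$-transitively on the $p+1$ points of $\prob^1(\F_p)$ and is simple; a Sylow $p$-subgroup $U$ has order $p$, is the unipotent radical of a point-stabilizer, satisfies $N_G(U)=\B'$, and there are exactly $p+1$ Sylow $p$-subgroups; finally, a nontrivial element of $G$ is either unipotent of order $p$ or semisimple, a nontrivial semisimple element lies in a unique maximal torus, each maximal torus is cyclic, distinct maximal tori intersect trivially, and each torus normalizer is dihedral over its torus with index $2$. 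Then, given a proper $H\leq G$ of order $n$, I would split into the cases $p\mid n$ and $p\nmid n$.

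If $p\mid n$, then $H$ contains a Sylow $p$-subgroup $P$ of $G$ (its order is the full $p$-part of $|G|$). If $P$ is normal in $H$, then $H\leq N_G(P)$, a conjugate of $\B'$, so $H$ is of type~(2). Otherwise the number of Sylow $p$-subgroups of $H$ is congruent to $1$ modulo $p$ and larger than $1$, hence at least $p+1=n_p(G)$; since every Sylow $p$-subgroup of $H$ is one of $G$, we get $n_p(H)=p+1$, so $H$ contains every Sylow $p$-subgroup of $G$, hence the subgroup they generate, which is normal in $G$ and nontrivial, hence all of $G$ by simplicity --- contradicting properness. So $p\mid n$ forces type~(2).

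If $p\nmid n$, every nontrivial element of $H$ is semisimple, so the cyclic subgroups $H\cap T$ (over maximal tori $T$ with $H\cap T\neq\{1\}$) partition $H\setminus\{1\}$. Letting $C_1,\dots,C_r$ represent their $H$-conjugacy classes, $m_i=|C_i|\geq 2$, and $\epsilon_i=[N_H(C_i):C_i]\in\{1,2\}$ (since $N_H(C_i)$ lies in a dihedral torus normalizer), counting nonidentity elements gives
\[
1-\frac1n=\sum_{i=1}^r\frac1{\epsilon_i}\Bigl(1-\frac1{m_i}\Bigr).
\]
Each summand lies in $[\tfrac14,1)$ while the left-hand side is in $[\tfrac12,1)$, so $r\leq 3$; the ensuing Diophantine analysis --- identical to the one classifying finite subgroups of $PGL_2(\mathbb{C})$ --- leaves exactly the possibilities: $H$ cyclic, $H$ dihedral, or $H\cong A_4,S_4,A_5$ (with $n=12,24,60$ respectively). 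This establishes the list for $PSL_2(\F_p)$.

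For $SL_2(\F_p)$ I would transfer the result along the central extension $1\to\{\pm I\}\to SL_2(\F_p)\to PSL_2(\F_p)\to 1$: a subgroup $\widetilde H\leq SL_2(\F_p)$ either maps isomorphically onto its image $H\leq PSL_2(\F_p)$ or is a degree-$2$ central extension of $H$. Since $p$ is odd, an involution of $PSL_2(\F_p)$ lifts to an element of order $4$, so a dihedral $H$ lifts to a dihedral or a dicyclic group, the preimage of a Borel subgroup is the corresponding upper-triangular group, and the preimages of $A_4,S_4,A_5$ have orders $24,48,120$; this yields the four families claimed. The main obstacle is the $p\nmid n$ step: one must enumerate the $H$-conjugacy classes of maximal cyclic subgroups and their normalizer indices $\epsilon_i$ precisely, then match each numerical solution of the displayed relation to the correct abstract group --- this bookkeeping is the substance of Dickson's theorem. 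One must also use $p\geq 5$ throughout (simplicity of $PSL_2(p)$, torus normalizers dihedral of the expected order, no small-prime degeneracies) and, in the last step, be careful about exactly which dihedral groups acquire a genuinely dicyclic lift.
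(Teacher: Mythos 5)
The paper does not prove Theorem~\ref{thm:6}: it is stated as Dickson's classical classification and is cited to Dickson's original work and to Suzuki's \emph{Group Theory} (Theorems 6.17 and 6.25), so there is no in-text proof to compare yours against. Your sketch is the standard argument and its skeleton is correct: Sylow theory on the action on $\prob^1(\F_p)$ handles $p\mid|H|$ (either $H$ normalizes a Sylow $p$-subgroup and hence sits in a Borel, or $H$ contains all $p+1$ Sylow $p$-subgroups and equals $G$ by simplicity), and for $p\nmid|H|$ the class-equation identity over the cyclic pieces $H\cap T$ reduces to the same Diophantine analysis as for finite subgroups of $PGL_2(\mathbb{C})$, followed by transport along the central extension $1\to\{\pm I\}\to SL_2(\F_p)\to PSL_2(\F_p)\to 1$. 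Two points worth tightening. First, when you assert $\epsilon_i\in\{1,2\}$ you are implicitly using that $N_H(C_i)\leq N_G(T)$ because $T$ is the unique maximal torus containing $C_i$, and that $N_H(C_i)\cap T=H\cap T=C_i$; together these give $\epsilon_i=[N_H(C_i):C_i]\leq[N_G(T):T]=2$, and it is worth saying so. Second, as you flag yourself, the genuine content of Dickson's theorem is exactly the bookkeeping you defer: matching each admissible tuple $(r;\epsilon_i,m_i)$ to an isomorphism type, and sorting out which dihedral subgroups acquire a dicyclic (as opposed to dihedral or $\mathbb{Z}/2\times$dihedral) lift to $SL_2$ — recall the torus normalizers in $SL_2(\F_p)$ are themselves dicyclic, not dihedral, since the Weyl element has order $4$. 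Your outline is a correct road map for the classical proof; it is not the paper's proof only in the trivial sense that the paper supplies none.
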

Thus every proper subgroup of $PSL_2(\F_p)$ containing more than 60 elements is solvable.
See \cite[Section 3.6]{suzuki1982group} for a proof of the classification of subgroups of $SL_2(\F)$ when $\F$ is an arbitrary field of characteristic $p$.

\begin{lem}
  \label{lem:6}
Every cyclic subgroup of $SL_2(\F_p)$ is conjugate (by matrices in $SL_2(\F_p)$) to a subgroup of $\B$ or to a subgroup of the following form:
\[
K_\epsilon := \left\{
  \begin{pmatrix}
    x & \epsilon y\\
    y & x\\
  \end{pmatrix}
\colon x,y\in\F_p,  x^2-\epsilon y^2=1
\right\},
\]
where $\epsilon$ is a non-square.
\end{lem}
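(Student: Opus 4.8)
The statement is about classifying cyclic subgroups of $SL_2(\F_p)$ up to conjugacy, and the natural approach is to reduce the question about a cyclic group $\langle g\rangle$ to a question about the single generator $g$, and then to classify elements of $SL_2(\F_p)$ up to conjugacy via their action on eigenvectors (equivalently, via rational canonical form / the characteristic polynomial). First I would recall that a cyclic subgroup is determined by a generator $g \in SL_2(\F_p)$, and that $\langle h^{-1}gh\rangle = h^{-1}\langle g\rangle h$, so it suffices to show every $g \in SL_2(\F_p)$ is conjugate to an element of $\B$ or of some $K_\epsilon$. Write $\chi_g(t) = t^2 - (\operatorname{tr} g)t + 1$ for the characteristic polynomial (the constant term is $\det g = 1$).

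The key case division is according to how $\chi_g$ factors over $\F_p$. \emph{Case 1: $\chi_g$ has a root $\lambda \in \F_p$.} Then $g$ has an eigenvector $v \in \F_p^2$; extending $v$ to a basis, $g$ is conjugate (over $\F_p$) to an upper triangular matrix, hence to an element of $\B$. (One should note that conjugation can be arranged inside $SL_2$ rather than $GL_2$: a base change matrix $h \in GL_2(\F_p)$ can be rescaled to have determinant $1$ by dividing one column by $\det h$, which does not affect the ``upper triangular'' property.) \emph{Case 2: $\chi_g$ is irreducible over $\F_p$.} Then its two roots lie in $\F_{p^2}\setminus\F_p$ and are Galois-conjugate, say $\lambda, \lambda^p$ with $\lambda\lambda^p = 1$. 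Fix a non-square $\epsilon \in \F_p^*$ and write $\F_{p^2} = \F_p(\sqrt\epsilon)$; then $\lambda = x + y\sqrt\epsilon$ for some $x,y \in \F_p$, and the norm condition $\lambda\lambda^p = x^2 - \epsilon y^2 = 1$ holds. The matrix $\begin{pmatrix} x & \epsilon y\\ y & x\end{pmatrix} \in K_\epsilon$ has the same characteristic polynomial $\chi_g$, which is irreducible, hence (rational canonical form) this matrix and $g$ are conjugate in $GL_2(\F_p)$, and again the conjugating matrix can be normalized to lie in $SL_2(\F_p)$.

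To finish I would make the $GL_2$-to-$SL_2$ normalization argument once, cleanly: if $h^{-1}gh = g'$ with $h \in GL_2(\F_p)$ and $d = \det h$, replace $h$ by $h\operatorname{diag}(d^{-1}, 1)$; this conjugates $g'$ to $\operatorname{diag}(d, 1)g'\operatorname{diag}(d^{-1},1)$. When $g'$ is upper triangular this is still upper triangular, so Case~1 is fine directly. For Case~2 one instead argues that any two matrices with the same irreducible characteristic polynomial are already conjugate in $SL_2(\F_p)$: the centralizer of such a matrix in $GL_2(\F_p)$ is the multiplicative group $\F_{p^2}^*$, whose determinant (=norm) map onto $\F_p^*$ is surjective, so one can correct the determinant of $h$ by multiplying on the left by a centralizing element of the appropriate determinant. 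I expect this normalization — ensuring the conjugacy happens inside $SL_2$, not just $GL_2$, especially in the irreducible case — to be the only real subtlety; the eigenvector/canonical-form skeleton is routine. (Strictly, there is also the degenerate possibility $g = \pm I$, which lies in $\B$ trivially, and the case $\chi_g = (t\mp1)^2$ with $g$ a non-trivial unipotent times $\pm1$, which is again visibly conjugate into $\B$; these are absorbed into Case~1 since $\chi_g$ has a root in $\F_p$.)
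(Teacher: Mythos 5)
Your proof is correct, and it has the same skeleton as the paper's: a case split on whether the characteristic polynomial of a generator $g$ is split over $\F_p$ (equivalently, whether $\tr(g)^2-4$ is a square), with the split case conjugated into $\B$ and the non-split case into a $K_\epsilon$. The differences are in how each case is closed. In the split case you give a self-contained determinant-rescaling argument ($h\mapsto h\,\mathrm{diag}(d^{-1},1)$ preserves upper-triangularity and forces $\det=1$), whereas the paper simply cites Borel's Proposition 16.6 for the $SL_2$-conjugacy. In the non-split case you argue abstractly via rational canonical form and then fix the determinant using the fact that the centralizer is $\F_{p^2}^\times$ and the norm map $\F_{p^2}^\times\to\F_p^\times$ is surjective; the paper instead writes down an explicit conjugating matrix
\[
\begin{pmatrix} 1 & 0\\ (d-a)/2b & 1 \end{pmatrix},
\]
which is unipotent and therefore already in $SL_2(\F_p)$, so there is nothing to correct. (One checks that $b\neq0$ in this case, since $b=0$ forces the discriminant to be a square.) Your version is more self-contained and makes the underlying structure (split versus non-split torus) explicit; the paper's version is shorter and entirely computational. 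Both are valid; the subtlety you correctly isolate — ensuring the conjugation lands in $SL_2$ and not just $GL_2$ — is precisely the point the paper handles via the citation in Case 1 and via the unipotent choice of conjugator in Case 2.
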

\begin{proof}
Suppose 
\[
g=
\begin{pmatrix}
  a & b\\
  c & d\\
\end{pmatrix}
\]
generates a cyclic subgroup $H$ of $SL_2(\F_p)$.
If $\tr(g)^2-4$ is a square over $\F_p$, then $g$ is conjugate (over $\F_p$) to a matrix of the form
\[
  \begin{pmatrix}
    a & 0 \\
    0 & a^{-1}\\
  \end{pmatrix}
\qquad\mbox{or}\qquad
\begin{pmatrix}
  1 & b \\
  0 & 1 \\
\end{pmatrix}.
\]
Now, if $H$ is a subgroup of $SL_2(\F_p)$ that is isomorphic to a subgroup of the standard Borel subgroup $\B$, then $H$ is conjugate to a subgroup of $\B$ by an element of $SL_2(\F_p)$ \cite[Proposition 16.6]{borel1969linear}.

Otherwise, if $\tr(g)^2-4$ is not a square, we can write
\begin{equation}
  \label{eq:47}
    \begin{pmatrix}
    1 & 0\\
    (d-a)/2b&1\\
  \end{pmatrix}^{-1}
  \begin{pmatrix}
    a & b\\
    c& d\\
  \end{pmatrix}
  \begin{pmatrix}
    1 & 0\\
    (d-a)/2b&1\\
  \end{pmatrix}
=
  \begin{pmatrix}
    x & y\\
    \epsilon y & x\\
  \end{pmatrix},
\end{equation}
where
\[
x=\frac{a+d}2,\qquad y=b,\andd \epsilon = \frac{(a+d)^2-4}{4b^2}.
\]
\end{proof}
See also \cite[Section 6, (6.3)]{suzuki1982group} and \cite[Section 5.2]{fulton1991representation}.

\subsubsection{Non-concentration in subgroups}
For subsets $B,C\subseteq\F_p$, let
\begin{equation}
  \label{eq:2}
S=  S_\rho= S_\rho(B,C):=
\left\{
  \begin{pmatrix}
    -\rho^{-1}c & -1+\rho^{-1}bc\\
    1 & -b\\
  \end{pmatrix}
\colon b\in B, c\in C
\right\}.
\end{equation}
Since $S/Z$ has the same cardinality as $S$, we may consider $S$ as a subset of $PSL_2(\F_p)$.
\begin{lem}
\label{lem:8}
%	Suppose that in the definition of the set $S$ one has $b_1 \in B_1$, $b_2 \in B_2$.
Let $S=S_\rho(B,C)$ be defined as in \eqref{eq:2}.
	Then for any $g_1, g_2 \in PSL_2 (\F_p)$ one has 
	\begin{equation}\label{f:intersection-}
	|g_1 \B' g_2 \cap S| \le \max\{ |B|, |C| \} \,.
%	|g_1 \B g_2 \cap S| \le \max\{ |B_1|, |B_2| \} \,.
	\end{equation}
	In particular, if $B=C$, then 
%	In particular, if $B_1=B_2$, then 
	\begin{equation}\label{f:intersection}
	|g_1 \B g_2 \cap S| \le |S|^{1/2} \,.
	\end{equation}
	Moreover, 
	%the same holds for any dihedral subgroup of $\SL_2 (\F_p)$ up to a factor of four. 
	for any dihedral subgroup $\G$ one has
\begin{equation}\label{f:intersection+}
	|g_1 \G g_2 \cap S| \le 8. %8\max\{ |B|, |C| \} \,.
%	|g_1 \G g_2 \cap S| \le 8\max\{ |B_1|, |B_2| \} \,.
\end{equation}
	\label{l:intersection}
\end{lem}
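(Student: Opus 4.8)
The plan is to reduce the three estimates to statements about how many times a given linear fractional transformation can be obtained by composing the elementary maps attached to $S$. Write an element of $S$ as $g_{b,c}$, the matrix in \eqref{eq:2}. A direct computation shows that $g_{b,c}$ factors as a product $h_c \cdot k_b$, where $k_b = \begin{pmatrix} 1 & 0 \\ 0 & 1\end{pmatrix}$-type unipotents and $h_c$ are the matrices depending only on $c$; more precisely one should exhibit $g_{b,c}$ as $\ell \cdot u(b) \cdot m \cdot u(c) \cdot r$ for fixed matrices $\ell, m, r$ and unipotent one-parameter subgroups $u(\cdot)$ (this is exactly the kind of factorization that makes $S$ a ``rich'' family). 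The key point is that the map $(b,c)\mapsto g_{b,c}$ is injective, and that $g_{b,c}$ determines the pair $(b,c)$ — indeed $b$ is the bottom-right entry up to sign and $c$ is the bottom-left entry up to the scalar $\rho^{-1}$, so the fibres of $(b,c)\mapsto g_{b,c}$ in $PSL_2$ are singletons. Thus $|E\cap S|$ for any set $E$ equals the number of pairs $(b,c)\in B\times C$ with $g_{b,c}\in E$.

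For \eqref{f:intersection-}, I would take $E = g_1\B' g_2$ and argue that for each fixed value of $b\in B$ there is at most one $c\in C$ with $g_{b,c}\in g_1\B' g_2$, and symmetrically for each fixed $c\in C$ at most one $b\in B$. The cleanest way to see this: the coset $g_1\B' g_2$ is the set of transformations sending a fixed point $P := g_2^{-1}(\infty)$ to a fixed point $Q := g_1(\infty)$ (since $\B'$ is the stabilizer of $\infty$ in $PGL_2$, conjugates of $\B'$ are stabilizers of points, and a double coset $g_1\B'g_2$ is $\{g : g(P) = Q\}$). Now $g_{b,c}(P) = Q$ is, after clearing denominators, a polynomial equation in the entries of $g_{b,c}$, hence in $(b,c)$; substituting the explicit factorization shows it is affine-linear (degree $\le 1$) in $c$ for each fixed $b$, and affine-linear in $b$ for each fixed $c$. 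A nondegenerate linear equation in one variable has at most one solution, and the degenerate case (coefficient of the variable vanishes) forces $P$ or $Q$ to be a specific exceptional point, which can contribute at most $\min(|B|,|C|)$ extra pairs but is easily checked to be subsumed; either way one gets $|g_1\B'g_2\cap S|\le\max\{|B|,|C|\}$. Inequality \eqref{f:intersection} is then immediate: when $B=C$ we have $\max\{|B|,|C|\}=|B|=|C|=|S|^{1/2}$, and the coset $g_1\B g_2$ in $SL_2$ projects into $g_1\B'g_2$ in $PSL_2$ without increasing the count.

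For \eqref{f:intersection+}, I would again use that a dihedral subgroup $\G\le PGL_2(\F_p)$ fixes (setwise) a pair of points or is contained in the stabilizer of an unordered pair, so $\G$, being conjugate into the normalizer of a torus, consists of transformations permuting a $2$-element set $\{\xi,\eta\}\subseteq\prob^1(\overline{\F_p})$ (the two fixed points of the torus, possibly Galois-conjugate). A double coset $g_1\G g_2$ then consists of the $g$ that send the fixed pair $\{P_1,P_2\}=g_2^{-1}\{\xi,\eta\}$ into the fixed pair $\{Q_1,Q_2\}=g_1\{\xi,\eta\}$. The condition $g_{b,c}(\{P_1,P_2\})\subseteq\{Q_1,Q_2\}$ splits into finitely many (at most $4$, by choice of which point goes where) systems of the form $g_{b,c}(P_i)=Q_j$; each such system, by the argument above, pins down at most one value of $c$ given $b$ and at most one value of $b$ given $c$, but combining two such constraints pins down $(b,c)$ up to boundedly many solutions — concretely, two independent affine equations in $(b,c)$ have a unique solution, and the finitely many degenerate configurations contribute an absolute constant. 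Bookkeeping the cases gives the bound $8$.

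The main obstacle I anticipate is the degenerate-coefficient analysis in \eqref{f:intersection-}: one must verify that the linear equation ``$g_{b,c}(P)=Q$ as a function of $c$'' is genuinely nondegenerate except for a controlled set of $(P,Q)$, and rule out (or absorb) those exceptional point-pairs using the specific shape of the matrices in \eqref{eq:2} — in particular that the entry $1$ in the lower-left forces $c$ to appear with a nonvanishing coefficient in most configurations. This is a finite but slightly fiddly computation with the explicit $2\times2$ matrices, and getting the constant exactly right in \eqref{f:intersection+} (as opposed to some larger absolute constant) requires carefully enumerating the at-most-four point assignments together with the at-most-two ways each can degenerate.
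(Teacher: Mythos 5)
Your approach is correct and takes a genuinely different, cleaner route than the paper. The paper works entirely at the level of explicit matrix equations: it writes out $g_1^{-1}sg_2^{-1}$ entrywise (once against the standard Borel form, once against the non-split torus $K_\epsilon$ from Lemma~\ref{lem:6}) and uses a determinant computation, respectively a quadratic in $(u,v)$ with nonvanishing leading coefficient, to show that $(b,c)$ is over-determined. You instead pass to the geometric picture on $\prob^1$: since $\B'=\mathrm{Stab}(\infty)$, the double coset $g_1\B'g_2$ is exactly $\{g: g(P)=Q\}$ with $P=g_2^{-1}(\infty)$, $Q=g_1(\infty)$, and the condition $g_{b,c}(P)=Q$ unwinds to $(b-P)(c+\rho Q)=\rho$, which is visibly bilinear and so pins down $c$ given $b$ (and vice versa). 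That is exactly the kind of one-line structural fact the paper's matrix calculation is laboring to extract; your route also makes transparent where the $\max(|B|,|C|)$ comes from (the only way the per-$b$ or per-$c$ count can fail to be $\le 1$ is when $P=\infty$ or $Q=\infty$, each of which wipes out one of the two constraints but not both). For the dihedral case your reduction to ``$g$ maps the fixed pair of the torus into the fixed pair'' is again the conceptual content of the paper's two matrix computations, and it has the additional merit of treating the split and non-split tori uniformly (the paper handles $K_\epsilon$ explicitly but for split tori merely appeals to ``the previous analysis,'' which as stated gives the weaker $\max(|B|,|C|)$ bound rather than a constant).

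Two things to tighten. First, your phrase ``the degenerate case (coefficient of the variable vanishes) forces $P$ or $Q$ to be a specific exceptional point'' is not quite right: from $(b-P)(c+\rho Q)=\rho$ the coefficient $b-P$ vanishing gives $0=\rho$, i.e.\ \emph{no} solutions, so that case is harmless; the genuine degeneracy is $P=\infty$ or $Q=\infty$, where the equation itself changes shape ($c=-\rho Q$ with $b$ free, or $b=P$ with $c$ free). You should separate these. Second, the constant $8$ in \eqref{f:intersection+} needs to be actually extracted: with your parametrization, for each of the two assignments $\{P_1,P_2\}\to\{Q_1,Q_2\}$ the two hyperbola equations $(b-P_i)(c+\rho Q_{\sigma(i)})=\rho$ intersect in at most two points (eliminate $c$ to get a quadratic in $b$ whose leading coefficient is nonzero because $P_1\neq P_2$), giving at most $4$ over both assignments; the paper's bound of $8$ is slightly generous, coming from the dicyclic case over $SL_2$. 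Over $\F_{p^2}$ (non-split torus) the same computation works since $b,c\in\F_p$ and the two conjugate constraints collapse to one $\F_{p^2}$-equation, i.e.\ two $\F_p$-equations of the same shape. With these two points cleaned up, your argument is a complete and somewhat more illuminating alternative to the paper's proof.
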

\begin{proof}
We will consider $S$ as a subset of $SL_2(\F_p)$; projection to $PSL_2(\F_p)$ cannot increase the size of the intersection of $S$ with subgroups.

First, we consider the number of elements of $S$ that are contained in a coset of a Borel subgroup.
Since all Borel subgroups are conjugate to the standard Borel subgroup $\B$, we consider the equation
\[
  \begin{pmatrix}
    x & y \\
    z & w \\
  \end{pmatrix}
  \begin{pmatrix}
    r & q \\
    0 & r^{-1}\\
  \end{pmatrix}
=
  \begin{pmatrix}
    -\rho^{-1}c & -1+\rho^{-1}bc\\
    1 & -b\\
  \end{pmatrix}
  \begin{pmatrix}
    X & Y \\
    Z & W \\
  \end{pmatrix},
\]
with $xw-yz=XW-YZ=1$;
that is,
\begin{equation}
  \label{eq:21}
    \begin{pmatrix}
    xr & qx+y/r \\
    zr & qz+w/r \\
  \end{pmatrix}
=
\begin{pmatrix}
  -\rho^{-1}c(X-bZ)-Z&   -\rho^{-1}c(Y-bW)-W \\
  X-bZ & Y-bW\\
\end{pmatrix}.
\end{equation}

Either $x$ or $z$ is non-zero, since $xz-yz=1$.
Suppose that $z\not=0$.
Then
\[
xzr=-z(\rho^{-1}c(X-bZ)+Z)
\]
so substituting $zr=X-bZ$, we have
\[
x(X-bZ)=-z(\rho^{-1}c(X-bZ)+Z).
\]
Thus
\[
c = \frac{-bxZ +(xX+zZ)}{-z\rho^{-1}(X-bZ)}
=\frac{(-xZ)b+(xX+zZ)}{(zZ\rho^{-1})b+(-z\rho^{-1}X)}.
\]
Since
\[
\det
\begin{pmatrix}
  -xZ & xX+zX \\
  \rho^{-1}zZ & -\rho^{-1}zX\\
\end{pmatrix}
=-\rho^{-1}z^2Z^2,
\]
$c$ is determined uniquely by $b$ if $Z\not=0$.
On the other hand, if $Z=0$, then $X\not=0$ since $XW-YZ=1$, so $c=-\rho xr/X$ and there are at most $|B|$ solutions to \eqref{eq:21}.
If we assume $x\not=0$, then a similar situation occurs, and in general there are at most $\max(|B|,|C|)$ solutions to \eqref{eq:21}.

Next, we consider the number of elements of $S$ contained in a dihedral or dicyclic subgroup of $SL_2(\F_p)$.
The number of such elements is at most four times the number of elements contained in a cyclic subgroup of $SL_2(\F_p)$; by Lemma~\ref{lem:6}, every cyclic subgroup is conjugate either to a subgroup of the standard Borel subgroup $\B$, in which case the previous analysis applies, or to a subgroup of the form
\[
K_\epsilon := \left\{
  \begin{pmatrix}
    u & \epsilon v\\
    v & u\\
  \end{pmatrix}
\colon u,v\in\F_p,  u^2-\epsilon v^2=1
\right\},
\]
where $\epsilon$ generates $\F_p^*$.
Thus we consider the equation
\[
  \begin{pmatrix}
    x & y \\
    z & w \\
  \end{pmatrix}
  \begin{pmatrix}
   u & \epsilon v \\
    v & u \\
  \end{pmatrix}
=
  \begin{pmatrix}
    -\rho^{-1}c & -1+\rho^{-1}bc\\
    1 & -b\\
  \end{pmatrix}
  \begin{pmatrix}
    X & Y \\
    Z & W \\
  \end{pmatrix},
\]
with $xw-yz=XW-YZ=1$;
that is,
\begin{equation}
\label{eq:31}
    \begin{pmatrix}
      xu+yv & \epsilon xv+yu\\
      zu+wv & \epsilon zv + wu\\
  \end{pmatrix}
=
\begin{pmatrix}
  -\rho^{-1}c(X-bZ)-Z&   -\rho^{-1}c(Y-bW)-W \\
  X-bZ & Y-bW\\
\end{pmatrix}.
\end{equation}
From \eqref{eq:31} we derive
\begin{equation}
  \label{eq:33}
  -Z=(x+\rho^{-1}cz)u+(y+\rho^{-1}cw)v 
= Au+Bv.
\end{equation}
Since $xw-yz=1$, either $x+\rho^{-1}cz\not=0$ or $y+\rho^{-1}cw\not=0$.
Solving \eqref{eq:33} for $u$ or $v$ and substituting into $u^2-\epsilon v^2=1$ yields
\[
(Bv+Z)^2-\epsilon A^2v^2=A^2\implies (B^2-\epsilon A^2)v^2+2BZv+Z^2-A^2=0
\]
or
\[
B^2u^2-\epsilon(Au+Z)^2=B^2\implies (B^2-\epsilon A^2)u^2 - 2\epsilon AZu - (Z^2+B^2)=0.
\]
In both cases, the leading coefficient is non-zero since $\epsilon$ is not a square, so there at are most two solutions for $u$ or $v$; since $u$ and $v$ determine one another by \eqref{eq:33}, there are at most two pairs $(u,v)$ such that \eqref{eq:31} holds.

The pair $(u,v)$ determines the left-hand side of equation \eqref{eq:31}.
Since either $Z\not=0$ or $W\not=0$, and either $X-bZ\not=0$ or $Y-bW\not=0$, once $(u,v)$ is fixed, $b$ and $c$ are determined.
Thus there are at most $2$ elements of $g K_\epsilon g'$ contained in $S$, and hence at most 4 elements of a coset of a dihedral group contained in $S$ or at most 8 elements of a coset of a dicyclic group contained in $S$.
\end{proof}

\subsubsection{Proof of Theorem~\ref{thm:1}}

\begin{proof}[Proof of Theorem~\ref{thm:1}]
 Suppose $|A+B| + | \rho\cdot A^{-1}+C|\leq M|A|$.
Let $Y= (A+B)\cup (\rho\cdot A^{-1}+C)$, so that $|Y|\leq M|A|$.
Let
\[
g_{b,c}(x)=\frac{\rho}{x-b}+c
\]
and let $S$ denote the set of matrices in $GL_2(\F_p)$ corresponding to the linear fractional transformations $g_{b,c}$ with $b\in B$ and $c\in C$:
\[
S=
\left\{
  \begin{pmatrix}
    c & \rho-bc\\
    1 & -b\\
  \end{pmatrix}
\colon b\in B, c\in C\right\}.
\]
Let 
\[
z^{-1}=
\begin{pmatrix}
  -\rho^{-1} & 0\\
  0 & 1\\
\end{pmatrix}
\]
and let $S'=z^{-1}S$ so that $S'\subseteq SL_2(\F_p)$.

By Lemma~\ref{lem:8}, we have $|g\Gamma\cap S'|\leq \max(|B|,|C|)$ for any proper subgroup $\Gamma\leq SL_2(\F_p)$, assuming that $\max(|B|,|C|)\geq 4$, which holds for $p$ sufficiently large.
Let $K=\min(|B|,|C|)$ and let $\nu$ be the uniform measure on $S'$.
Then $\norm{\nu}_\infty=|S'|^{-1}\leq K^{-1}$ and
\[
\nu(g\Gamma)=\frac{|g\Gamma\cap S'|}{|S'|}\leq K^{-1}
\]
for all proper subgroups $\Gamma\leq SL_2(\F_p)$.
It follows from Theorem~\ref{thm:15} that
\begin{equation}
  \label{eq:57}
  \sum_g (\delta_z\ast \nu)(g)|Y\cap gY| \leq \frac{|Y|^2}p +C_*|Y|p^{-\delta},
\end{equation}
where
\[
\delta =\frac 14 b_0^{-\log_Kp}.
\]
Since $K=\min(|B|,|C|)=p^\epsilon$, we have $\delta\approx 2^{-O(1/\epsilon)}$.

On the other hand, for all $g\in zS'=S$ we have $|Y\cap gY|\geq |A|\geq \frac 1{M}|Y|$, hence
\begin{equation}
  \label{eq:58}
  \sum_g (\delta_z\ast\nu)(g)|Y\cap gY|=\frac 1{|S|}\sum_{g\in z\in S}|Y\cap gY|\geq\frac 1{M}|Y|.
\end{equation}
If $|Y|\leq p^{1-\delta}$, then equations \eqref{eq:57} and \eqref{eq:58} yields
\begin{equation}
  \label{eq:59}
  M\geq \frac 1{(C_*+1)}p^\delta,
\end{equation}
as claimed.
\end{proof}

\subsection{Proof of Theorem~\ref{thm:2}}

The proof of Theorem~\ref{thm:2} uses the same idea as the proof of Theorem~\ref{thm:1}, but we use Theorem~\ref{thm:5} in place of Theorem~\ref{thm:15}.
In particular, Theorem~\ref{thm:2} does not require the non-concentration results from the previous proof.
(However, the proof of Theorem~\ref{thm:5} does use non-concentration.)

\begin{proof}[Proof of Theorem~\ref{thm:2}]
Suppose $|A+B| + | A^{-1}+C|\leq \alpha^{-1}|A|$.
Let $Y= -(A+B)\cup (A^{-1}+C)^{-1}$, so that $|Y|\leq \alpha^{-1}|A|$.
Let
\[
\frac 1{g_{b,c}(x)}=\frac{-1}{x-b}+c
\]
and let $S$ denote the set of matrices in $SL_2(\F_p)$ corresponding to the linear fractional transformations $g_{b,c}$ with $b\in B$ and $c\in C$.
For each $g\in S$, we have $|Y\cap gY|\geq \alpha|A|$.

The elements of $S$ have the form
\[
  \begin{pmatrix}
     1 & -b \\
     c & 1-bc\\
  \end{pmatrix},
\]
where $b\in\{1,\ldots,M\}$ and $c\in\{1,\ldots, N\}$.

By Theorem~\ref{thm:5} we have
\[
\min(|B|,|C|)\leq 2 \left( \frac 2\alpha \right)^{\tau/\delta}+1,
\]
provided that $\min(|B|,|C|)\geq 11$.

It follows that
\[
\alpha^{-1}\geq\frac 12 \left( \frac{\min(|B|,|C|)}{2} \right)^{\delta/\tau},
\]
which implies
\[
|A+B|+|A^{-1}+C|\geq \frac{|A|}2 \left( \frac{\min(|B|,|C|)}{2} \right)^{\delta/\tau}.
\]
\end{proof}

\section{$\ell^2$-flattening/higher energies}
\label{sec:ell2-flatt-energ}

For a probability measure $\mu$ on $SL_2(\F_p)$, let $\mu^{(\ell)}$ denote the $\ell$-fold convolution of $\mu$ with itself; that is, $\mu^{(1)}=\mu$ and $\mu^{(\ell+1)}=\mu\ast \mu^{(\ell)}$.
The \emph{adjoint $\mu^{\sim}$}\/ of a finitely supported measure $\mu$ is defined by $\mu^\sim(x)=\mu(-x)$.

The following theorem combines the ``middle-game'' and ``end-game'' steps of Bourgain and Gamburd's proof of uniform expansion for Cayley graphs of $SL_2(\F_p)$ \cite{bourgain2008uniform}.
See \cite{yehudayoff2012proving} and \cite{sarnak2014notes} for an overview of the three steps of the proof of the main theorem from \cite{bourgain2008uniform}.
\begin{thm}
  \label{thm:8}
Let $\mu$ be a symmetric probability measure on $SL_2(\F_p)$ such that for some parameter $K\geq 1$ 
\begin{itemize}
\item $\mu(g\Gamma)\leq K^{-1}$ for any proper subgroup $\Gamma\leq SL_2(\F_p)$ and element $g\in SL_2(\F_p)$, and
\item $\norm{\mu}_\infty\leq K^{-1}$.
\end{itemize}
Then for any integer $k$
\[
\left|\norm{\mu^{(2^k)}}_2^2 - \frac 1{|SL_2(\F_p)|}\right| \leq C_*^k K^{-c_* k}
\]
where $c_*\in(0,1)$ and $C_*>1$ are absolute constants.
\end{thm}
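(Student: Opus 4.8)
The plan is to decompose the argument into two classical pieces: an $\ell^2$-flattening step (the ``middle game''), which says that repeated self-convolution decreases the $\ell^2$-norm by a power of $K$ at each doubling until one reaches near-uniformity on the whole group, and a spectral-gap/mixing step (the ``end game''), which upgrades near-uniformity in $\ell^2$ to exact closeness to the uniform measure. I would first recall the Bourgain--Gamburd $\ell^2$-flattening lemma in the quantitative form: under the non-concentration hypothesis $\mu(g\Gamma)\le K^{-1}$ for all proper subgroups $\Gamma$, there are absolute constants $c_*\in(0,1)$, $C_*>1$ such that for any symmetric probability measure $\mu$ on $SL_2(\F_p)$ with $\|\mu\|_2^2 \ge |SL_2(\F_p)|^{-1}+\text{(error)}$, one has $\|\mu\ast\mu\|_2^2 \le C_* K^{-c_*}\|\mu\|_2^2$. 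The key input here is the Bourgain--Gamburd machine combining the Balog--Szemer\'edi--Gowers theorem, Helfgott-type product theorems (or Dickson's classification, Theorem~\ref{thm:6}, to rule out concentration in proper subgroups), and the fact that $SL_2(\F_p)$ is quasirandom so that a measure cannot be ``approximately invariant'' under a large set without being spread out.

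Second, I would iterate: starting from $\|\mu\|_\infty \le K^{-1}$, which gives $\|\mu^{(1)}\|_2^2 = \|\mu\|_2^2 \le K^{-1}$, apply the flattening lemma $k$ times to obtain
\[
\|\mu^{(2^k)}\|_2^2 \le \max\left( C_*^k K^{-c_* k}, \; \frac{C}{|SL_2(\F_p)|} \right)
\]
for an absolute constant $C$; that is, either the $\ell^2$-norm has shrunk by the advertised factor, or we have already reached the ``floor'' at scale $|SL_2(\F_p)|^{-1}$. The crucial observation is that the non-concentration hypothesis is preserved (or only mildly degraded) under convolution, since the support of $\mu^{(2^k)}$ still cannot concentrate in a coset of a proper subgroup once $\mu$ does not; more carefully, $\mu^{(\ell)}(g\Gamma)\le \max_h \mu(h\Gamma)\le K^{-1}$ because $\mu^{(\ell)} = \mu^{(\ell-1)}\ast\mu$ and convolving on the right with $\mu$ averages the value of an indicator of a right coset. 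So the hypotheses needed for each application of the flattening lemma remain valid throughout the iteration, which is what allows the clean geometric decay $C_*^k K^{-c_* k}$.

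Third, once $\|\mu^{(2^k)}\|_2^2$ is within a constant factor of $|SL_2(\F_p)|^{-1}$, I would invoke the end-game: writing $\mu^{(2^k)} = |SL_2(\F_p)|^{-1} + f$ with $\int f = 0$, one has $\|f\|_2^2 = \|\mu^{(2^k)}\|_2^2 - |SL_2(\F_p)|^{-1}$, and one more application of flattening (or directly a spectral gap argument using quasirandomness of $SL_2(\F_p)$ in the sense of Gowers/Sarnak-Xue, bounding the smallest nontrivial dimension of an irreducible representation by $\gg p$) contracts $\|f\|_2$ by a further power, yielding $\big|\|\mu^{(2^k)}\|_2^2 - |SL_2(\F_p)|^{-1}\big| \le C_*^k K^{-c_* k}$ after possibly adjusting the constants $c_*, C_*$. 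Combining the two regimes and absorbing constants gives the stated bound for every integer $k$ (the bound being trivial when the right side exceeds $1$).

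\medskip

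The main obstacle, and the place where real work is hidden, is the $\ell^2$-flattening lemma itself: making the single-step contraction $\|\mu\ast\mu\|_2^2 \le C_* K^{-c_*}\|\mu\|_2^2$ effective with \emph{absolute} constants independent of $p$ and of $\mu$. This requires running Balog--Szemer\'edi--Gowers together with a quantitative non-concentration/product estimate in $SL_2(\F_p)$ and checking that the exponent loss at each step is a fixed power of $K$; the bookkeeping that the constant $C_*^k$ accumulates only multiplicatively (rather than, say, $C_*^{2^k}$) is exactly what makes the final bound usable in Theorem~\ref{thm:15}. I expect to cite \cite{bourgain2008uniform} for the core of this and to spend the bulk of a detailed proof verifying that the non-concentration hypothesis propagates and that the constants can be taken uniform.
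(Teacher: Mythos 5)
Your proposal is correct in outline and uses the same tool kit the paper does: preservation of non-concentration under convolution, Balog--Szemer\'edi--Gowers plus a Helfgott-type product theorem for the flattening step, and quasirandomness of $SL_2(\F_p)$. The difference is organizational: the paper works with the balanced function $f^{(\ell)}=\mu^{(\ell)}-|SL_2(\F_p)|^{-1}$ rather than with $\mu^{(\ell)}$ directly, which folds your separate ``end game'' into the per-doubling contraction. Concretely, the paper shows $\|f^{(2\ell)}\|_2^2 \leq M^{-1}\|f^{(\ell)}\|_2^2$ with $M^{-1}=C_*K^{-c_*}$ in \emph{every} regime: when $\|f^{(\ell)}\|_2^2 < (p^2M)^{-1}$ the quasirandomness bound $\|f^{(2\ell)}\|_2^2\le p^2\|f^{(\ell)}\|_2^4$ of Theorem~\ref{thm:9} already gives the contraction, and otherwise the weighted BSG lemma (Lemma~\ref{lem:7}), the BSG theorem (Lemma~\ref{lem:1}), Helfgott's product theorem (Theorem~\ref{thm:10}), and the non-concentration hypothesis force a contradiction. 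Iterating $k$ times and using $\|f^{(1)}\|_2^2\le\|\mu\|_2^2\le K^{-1}$ then gives the stated bound with no separate floor case. Your version, which flattens $\|\mu^{(2^j)}\|_2^2$ until it is within a constant of $|SL_2(\F_p)|^{-1}$ and then appeals to ``one more application of flattening \ldots{} after possibly adjusting the constants'', is morally the same but leaves the quantitative matching of the floor to the target $C_*^kK^{-c_*k}$ underspecified (you would need to check the floor is already below the target at the step where you reach it and that the quadratic decay from quasirandomness catches up); the balanced-function formulation avoids that bookkeeping entirely. Your observation that $\mu^{(\ell)}(g\Gamma)\le\|\mu^{(\ell-1)}\|_1\sup_h\mu(h\Gamma)\le K^{-1}$ is precisely the one the paper uses and is indeed what lets the iteration run with absolute constants.
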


Before proving Theorem~\ref{thm:8}, we state some preliminaries: a ``quasi-randomness'' bound for convolution on $PSL_2(\F_p)$, and results from arithmetic combinatorics.

%%% FROBENIUS BOUND
The following bound is due to Gowers \cite{gowers2008quasirandom} and Babai, Nikolov, and Pyber \cite{babai2008product}.
\begin{thm}
  \label{thm:9}
Let $\mu$ be a probability measure on $PSL_2(\F_p)$ and let $f\colon PSL_2(\F_p)\to \CC$ have mean zero: $\sum_g f(g)=0$.
Then
\[
\norm{\mu\ast f}_2 \leq p\norm{\mu}_2\norm{f}_2.
\]
\end{thm}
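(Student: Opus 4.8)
The plan is to deduce Theorem~\ref{thm:9} from non-abelian Fourier analysis on the finite group $G=PSL_2(\F_p)$, using the fact that $G$ is \emph{quasirandom}: every nontrivial irreducible representation of $PSL_2(\F_p)$ has dimension at least $D:=(p-1)/2$. This dimension bound is classical and can be read off from the character table of $PSL_2(\F_p)$ (see e.g.\ \cite{fulton1991representation}); it is the only external input, and everything else is routine bookkeeping. In effect this is just the Gowers / Babai--Nikolov--Pyber mixing estimate specialized to this group.

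Concretely, for $\phi\colon G\to\CC$ and each irreducible unitary representation $\rho$ of $G$, of dimension $d_\rho$, set $\hat\phi(\rho)=\sum_{g\in G}\phi(g)\rho(g)$, and write $\norm{M}_{\mathrm{HS}}$ and $\norm{M}_{\mathrm{op}}$ for the Hilbert--Schmidt and operator norms of a matrix $M$. With the convolution normalization fixed in the Notation section, $(\mu\ast f)(x)=\sum_g\mu(g)f(g^{-1}x)$, the Fourier transform turns convolution into matrix multiplication, $\widehat{\mu\ast f}(\rho)=\hat\mu(\rho)\hat f(\rho)$, and Plancherel's identity reads
\[
\norm{\phi}_2^2=\sum_{g\in G}|\phi(g)|^2=\frac1{|G|}\sum_\rho d_\rho\norm{\hat\phi(\rho)}_{\mathrm{HS}}^2 ,
\]
the sum running over the irreducible representations of $G$. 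The first step is to observe that the mean-zero hypothesis $\sum_g f(g)=0$ says precisely that $\hat f$ vanishes on the trivial representation, so in the Plancherel expansion of $\mu\ast f$ only nontrivial $\rho$ contribute. The second step is the submultiplicativity estimate $\norm{\hat\mu(\rho)\hat f(\rho)}_{\mathrm{HS}}\le\norm{\hat\mu(\rho)}_{\mathrm{op}}\norm{\hat f(\rho)}_{\mathrm{HS}}\le\norm{\hat\mu(\rho)}_{\mathrm{HS}}\norm{\hat f(\rho)}_{\mathrm{HS}}$, combined with the trivial consequence of Plancherel that a single summand is at most the whole sum: $d_\rho\norm{\hat\mu(\rho)}_{\mathrm{HS}}^2\le|G|\norm{\mu}_2^2$, hence $\norm{\hat\mu(\rho)}_{\mathrm{HS}}^2\le|G|\norm{\mu}_2^2/d_\rho\le|G|\norm{\mu}_2^2/D$ for nontrivial $\rho$, using $d_\rho\ge D$.

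Putting these together and applying Plancherel once more to $f$,
\[
\norm{\mu\ast f}_2^2=\frac1{|G|}\sum_{\rho\ne\mathbf{1}}d_\rho\norm{\hat\mu(\rho)\hat f(\rho)}_{\mathrm{HS}}^2\le\frac{\norm{\mu}_2^2}{D}\cdot\frac1{|G|}\sum_\rho d_\rho\norm{\hat f(\rho)}_{\mathrm{HS}}^2\cdot|G|=\frac{|G|}{D}\,\norm{\mu}_2^2\norm{f}_2^2 .
\]
Since $|G|=|PSL_2(\F_p)|=\tfrac12 p(p^2-1)$ and $D=(p-1)/2$, we get $|G|/D=p(p+1)$, hence $\norm{\mu\ast f}_2\le\sqrt{p(p+1)}\,\norm{\mu}_2\norm{f}_2$; the clean bound $p\norm{\mu}_2\norm{f}_2$ is this up to the harmless factor $\sqrt{1+1/p}$ (one may equally write $p+1$ for $p$, which changes nothing downstream).

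I do not expect a genuine obstacle here: the one nontrivial ingredient is the representation-dimension bound $D\ge(p-1)/2$, which is classical, and the argument never even uses that $\mu$ is a probability measure — it holds for arbitrary $\mu\colon G\to\CC$. The only point requiring care is matching the convolution convention so that $\widehat{\mu\ast f}=\hat\mu\hat f$ holds exactly (a mismatched convention introduces transposes or adjoints that do not affect Hilbert--Schmidt norms but should be checked), and tracking the numerical constant through Plancherel.
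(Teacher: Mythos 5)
Your proof is correct in structure and is exactly the argument of the cited references: the paper states Theorem~\ref{thm:9} with an attribution to Gowers and Babai--Nikolov--Pyber and an explicit note that it ``isn't exactly the form stated in either of the quoted papers,'' and it does not include a proof. So the comparison is really with the quasirandomness argument in those references, which is what you reproduce: Plancherel on $G=PSL_2(\F_p)$, the vanishing of $\hat f$ at the trivial representation, the bound $\norm{\hat\mu(\rho)}_{\mathrm{op}}\le\norm{\hat\mu(\rho)}_{\mathrm{HS}}\le\sqrt{|G|\norm{\mu}_2^2/d_\rho}$, and the lower bound $d_\rho\ge(p-1)/2$ on nontrivial irreducible dimensions. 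The convolution convention you adopt matches the one fixed in the paper's notation section, and the identity $\widehat{\mu\ast f}(\rho)=\hat\mu(\rho)\hat f(\rho)$ does hold with that convention, so that check is fine. Do note that the paper's own Proposition~\ref{prop:1}, proved in the appendix by an elementary Cauchy--Schwarz completion argument with no representation theory, cannot substitute for Theorem~\ref{thm:9}: it needs the $G$-action on $X$ to be doubly transitive, and the left regular action of $G$ on itself is merely simply transitive. Proposition~\ref{prop:1} is used instead for the $PSL_2(\F_p)$ action on $\prob^1(\F_p)$ (Corollary~\ref{cor:1b}), which is the version actually applied in the proof of Theorem~\ref{thm:15}.

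The one issue to flag is the constant. With the bound $d_\rho\ge(p-1)/2$ your argument yields $\sqrt{|G|/D}=\sqrt{p(p+1)}=p\sqrt{1+1/p}$, which is slightly larger than the $p$ in the statement, and you note this. A little more precisely: when $p\equiv 1\pmod 4$ the minimal nontrivial irreducible dimension of $PSL_2(\F_p)$ is $(p+1)/2$, giving $\sqrt{p(p-1)}<p$ and the stated bound; when $p\equiv 3\pmod 4$ it is $(p-1)/2$ and one only gets $\sqrt{p(p+1)}$. In the worst case the statement with constant exactly $p$ is slightly optimistic as written, but the discrepancy is a factor of $\sqrt{1+1/p}$ and is immaterial for the paper's use of this bound (it feeds into the proof of Theorem~\ref{thm:8}, where only the order of magnitude matters). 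So apart from this harmless constant, your proof is complete and sound.
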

\fxnote{need to add details... this isn't exactly the form stated in either of the quoted papers}

%%% ARITHMETIC COMBINATORICS
The following version of \bsg{} theorem can be found in \cite{murphy2017group} or derived from arguments in \cite{tao2008product}.
Recall that the \emph{multiplicative energy}\/ of a finite subset $A$ of a multiplicative group is defined by
\[
E(A):=|\{(a_1,a_2,a_3,a_4)\in A^4\colon a_1a_2=a_3a_4\}|.
\]
\begin{lem}[\bsg{} theorem]
  \label{lem:1}
If $A$ is a finite subset of a group $G$ and $E(A)\geq \zeta |A|^3$, then there exists a set $S\subseteq G$ and an element $a$ in $A$ such that $S\subseteq a^{-1}A$, $|S|\gg \zeta^C|A|$, and $|S^3|\ll \zeta^{-C}|S|$, where $C>0$ is an absolute constant.
\end{lem}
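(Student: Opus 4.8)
The plan is to extract a large subset $A'\subseteq A$ that is ``well-connected'' in the multiplicative graph, then pass to a small tripling set via a covering argument. First I would set up the bipartite (or Cayley) graph whose vertices are two copies of $A$ and whose edges record the popular quotients: since $E(A)\geq\zeta|A|^3$, a dyadic pigeonhole on the multiplicity function $r(x)=|\{(a,b)\in A\times A\colon ab^{-1}=x\}|$ shows that a $\gg\zeta$-proportion of the energy is carried by quotients $x$ with $r(x)\asymp t$ for some fixed $t$, and the corresponding edge set has size $\gg\zeta|A|^2$. Applying the graph-theoretic heart of BSG (the ``dependent random choice''/path-counting lemma, as in \cite{tao2008product} or \cite{murphy2017group}) to this dense graph yields subsets $A_1,A_2\subseteq A$ with $|A_1|,|A_2|\gg\zeta^{O(1)}|A|$ such that for all but a $\zeta^{O(1)}$-fraction of pairs $(a_1,a_2)\in A_1\times A_2$ there are $\gg\zeta^{O(1)}|A|^2$ representations $a_1a_2^{-1}=(xy^{-1})(uv^{-1})$ with $x,y,u,v\in A$; this is the step that controls $|A_1A_2^{-1}|$ or, after the usual symmetrization, the tripling of a single set.

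Next I would convert small doubling/tripling of a pair of sets into the clean statement in the lemma. Concretely, one obtains $|A_1 A_2^{-1}|\ll\zeta^{-O(1)}|A|$, and then a Ruzsa-covering argument (Ruzsa triangle inequality for the noncommutative setting, valid because we only ever multiply a bounded number of set factors) upgrades this to a single set $S$ with $|S^3|\ll\zeta^{-O(1)}|S|$ and $|S|\gg\zeta^{O(1)}|A|$; translating by an element $a\in A$ to arrange $S\subseteq a^{-1}A$ is harmless since translation does not change $|S^3|/|S|$. Throughout, the exponent $C$ stays absolute because every pigeonhole and every graph estimate costs only a fixed power of $\zeta$, and the number of set-multiplications in the covering step is bounded.

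The main obstacle is the noncommutative bookkeeping in the graph step: in a general group one must be careful that the ``paths'' counted by dependent random choice correspond to the multiplicative relation $a_1a_2^{-1} = (\text{word in }A,A^{-1})$ in the right order, and that the output is genuinely a small-tripling set rather than merely a set with $|AA^{-1}A|$ controlled on one side. This is exactly why the lemma is stated with $S^3$ rather than $S+S$ or $SS$, and why I would quote the packaged version from \cite{murphy2017group} (or reconstruct it from \cite{tao2008product}) rather than re-derive it: the commutative proof via Plünnecke does not apply, and one instead runs the Ruzsa–Tao argument, keeping track of the fact that all Ruzsa-type inequalities used hold for at most triple products. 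Given that packaged statement, the remaining work — the dyadic pigeonhole on $E(A)$ and the final covering — is routine.
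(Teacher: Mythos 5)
The paper offers no proof of Lemma~\ref{lem:1}; it simply cites \cite{murphy2017group} and \cite{tao2008product}. Your sketch correctly outlines the standard noncommutative Balog--Szemer\'edi--Gowers argument (dyadic pigeonhole on the representation function, graph-theoretic BSG via path counting, Ruzsa covering to pass to a small-tripling set, and translation by $a\in A$ to land inside $a^{-1}A$), and you appropriately conclude by deferring to the same packaged references, so the approach is essentially the same.
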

The following lemma allows us to reduce a statement about measures whose self-convolutions have large $\ell^2$ norm to a statement about multiplicative energy.
\begin{lem}[Weighted \bsg{} {\cite[Lemma 1.4.1]{tao2015expansion}}]
  \label{lem:7}
Let $\nu$ be a finitely supported function on a multiplicative group with $\norm{\nu}_1\ll 1$.
Suppose that $\norm{\nu\ast \nu}_2^2 \geq M^{-1}\norm{\nu}_2^2$ for some $M>1$.
Then there exists a set $A\subseteq\supp(\nu)$ such that
\begin{equation}
  \label{eq:60}
\frac 1{M\norm{\nu}_2^2}\ll |A|\ll \frac{M^2}{\norm{\nu}_2^{2}},
\end{equation}
\begin{equation}
  \label{eq:61}
  |\nu(g)|\gg \frac {\norm{\nu}_2^2}{M^2}
\end{equation}
for all $g\in A$, and
\begin{equation}
  \label{eq:62}
  E(A)\gg M^{-3}\norm{\nu}_2^{-6}\gg M^{-9}|A|^3.
\end{equation}
\end{lem}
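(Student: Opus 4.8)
The plan is to produce $A$ as a super‑level set of $\nu$ and to read off the three conclusions by elementary estimates; the only delicate point is getting the precise power of $M$ in the energy bound \eqref{eq:62}. First I would reduce to $\nu\ge 0$: replacing $\nu$ by $|\nu|$ preserves $\norm{\nu}_1$ and $\norm{\nu}_2$, and the pointwise inequality $|(\nu\ast\nu)(x)|\le(|\nu|\ast|\nu|)(x)$ shows it cannot decrease $\norm{\nu\ast\nu}_2$. Writing $\Delta:=\norm{\nu}_2^2$, I would then record the algebraic identity
\[
\norm{\nu\ast\nu}_2^2=\sum_{x}\Bigl(\sum_{ab=x}\nu(a)\nu(b)\Bigr)^2=\sum_{ab=cd}\nu(a)\nu(b)\nu(c)\nu(d)=:E(\nu),
\]
a weighted multiplicative energy, so that the hypothesis becomes $E(\nu)\ge M^{-1}\Delta$. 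Two trivial facts will be used throughout: $\norm{\nu}_\infty^2\le\norm{\nu}_2^2=\Delta$, and, taking $c=a,d=b$ in the sum, $E(\nu)\ge\norm{\nu}_2^4=\Delta^2$; in particular the hypothesis is substantive only when $M\Delta<1$, and when $M\Delta\ge 1$ all three conclusions hold as soon as $A$ is nonempty (which it will be, since $\norm{\nu}_\infty\ge\Delta/\norm{\nu}_1\gg\Delta>\lambda$ for the threshold $\lambda$ below).

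Next I would set $A:=\{g:\nu(g)>\lambda\}$ with $\lambda=c\Delta/M$ for a small absolute constant $c$, and put $\nu_A:=\nu\cdot 1_A$. Two truncation estimates drive everything. Since $\sum_{g\notin A}\nu(g)^2\le\lambda\norm{\nu}_1\ll\lambda$, choosing $c$ small gives $\norm{\nu_A}_2^2\ge\tfrac12\Delta$, hence $\norm{\nu_A}_2^2\asymp\Delta$. And in the expansion of $E(\nu)$ any monomial with a factor $\nu(g)$, $g\notin A$, is at most $\lambda$ times a triple sum in which one variable is pinned by $ab=cd$; summing, such monomials contribute $\ll\lambda\norm{\nu}_1^3\ll\lambda$ in total, so for $c$ small $E(\nu_A)\ge\tfrac12E(\nu)\ge\Delta/(2M)$. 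With these in hand: \eqref{eq:61} holds since $\nu(g)>\lambda=c\Delta/M\ge c\Delta/M^2$ on $A$; the upper bound in \eqref{eq:60} follows from $\Delta\ge\norm{\nu_A}_2^2>\lambda^2|A|$; and for the lower bound in \eqref{eq:60} I would use the Cauchy--Schwarz bound $(\nu_A\ast\nu_A)(x)\le\norm{\nu_A}_2^2\le\Delta$, giving
\[
\frac{\Delta}{2M}\le E(\nu_A)=\norm{\nu_A\ast\nu_A}_2^2\le\norm{\nu_A\ast\nu_A}_\infty\,\norm{\nu_A\ast\nu_A}_1\le\Delta\,\norm{\nu_A}_1^2,
\]
whence $\norm{\nu_A}_1^2\gg M^{-1}$, and then $|A|\ge\norm{\nu_A}_1^2/\norm{\nu_A}_2^2\gg M^{-1}\Delta^{-1}$ by Cauchy--Schwarz once more.

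It remains to prove the energy bound \eqref{eq:62}, and this is the step I expect to be the main obstacle. Writing $\nu_A=\norm{\nu_A}_\infty h$ with $0\le h\le 1_A$ and using nonnegativity of all terms, $E(\nu_A)=\norm{\nu_A}_\infty^4\,E(h)\le\norm{\nu_A}_\infty^4\,E(A)$, so $E(A)\ge E(\nu_A)/\norm{\nu_A}_\infty^4\gg\Delta/(M\norm{\nu}_\infty^4)$; however the trivial bound $\norm{\nu}_\infty\le\sqrt\Delta$ only yields $E(A)\gg M^{-1}\Delta^{-1}$, which falls short of the claimed $M^{-3}\Delta^{-3}$ precisely in the range $M\Delta<1$, where $\nu$ may carry a few atoms much larger than the bulk. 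To recover the sharp exponent I would need to see the structured part of $A$: dyadically decompose $\nu_A$ according to the size of $\nu$ and pass to a single level set $A'\subseteq A$ on which $\nu$ is of constant order and which still carries a comparable share of $E(\nu_A)$, converting there the weighted energy into the combinatorial energy $E(A')\le E(A)$; one checks that the hypothesis in fact forces $\nu_A$ to be essentially as structured as possible, so the losses in this step are absorbed by the powers of $M$. This is the standard \bsg{}-type bookkeeping, carried out in full in \cite[Lemma~1.4.1]{tao2015expansion}. Finally, the second inequality in \eqref{eq:62} is immediate from the upper bound in \eqref{eq:60}: $M^{-9}|A|^3\ll M^{-9}(M^2\Delta^{-1})^3=M^{-3}\Delta^{-3}$.
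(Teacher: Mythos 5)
Your derivation of \eqref{eq:60} and \eqref{eq:61} from the single lower truncation $A=\{g:\nu(g)>\lambda\}$, $\lambda\asymp\norm{\nu}_2^2/M$, is correct and closely parallels the paper's opening moves (with the small variant that you extract the lower bound on $|A|$ from $\norm{\nu_A\ast\nu_A}_2^2\le\norm{\nu_A}_2^2\norm{\nu_A}_1^2$ and Cauchy--Schwarz, whereas the paper reads it off afterwards from $E(A)\le|A|^3$). You also correctly flag the gap: a lower truncation alone cannot give \eqref{eq:62}, and your fallback $E(A)\gg \norm{\nu}_2^2/(M\norm{\nu}_\infty^4)\gg M^{-1}\norm{\nu}_2^{-2}$ is indeed strictly weaker than $M^{-3}\norm{\nu}_2^{-6}$ throughout the relevant range $M\norm{\nu}_2^2<1$.

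The dyadic repair you sketch does not obviously close that gap. On $A$ the values of $\nu$ range from $\lambda\asymp\norm{\nu}_2^2/M$ up to $\norm{\nu}_\infty\le\norm{\nu}_2$, so a dyadic decomposition has on the order of $\log\bigl(M/\norm{\nu}_2\bigr)$ levels, which in the regime $M\norm{\nu}_2^2<1$ is not controlled by any fixed power of $\log M$; the resulting pigeonhole loss cannot be ``absorbed by the powers of $M$'' without additional input. The paper's proof avoids this by a one-step \emph{upper} truncation rather than a dyadic one. It decomposes $\nu=\nu_1+\nu_2+\nu_3$ with $\nu_1$ the part where $\nu<\lambda\norm{\nu}_2^2$, $\nu_3$ the part where $\nu>\Lambda\norm{\nu}_2^2$, and $\Lambda\asymp M^{1/2}$. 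The two estimates you are missing are $\norm{\nu_1}_2^2\ll\lambda\norm{\nu}_2^2$ and, crucially, $\norm{\nu_3}_1\le 1/\Lambda$; by Young's inequality these give $\norm{\nu_1\ast\nu}_2\ll\lambda^{1/2}\norm{\nu}_2$ and $\norm{\nu_3\ast\nu}_2\le\Lambda^{-1}\norm{\nu}_2$, so $\norm{\nu_2\ast\nu_2}_2\gg M^{-1/2}\norm{\nu}_2$ after subtracting these errors from $\norm{\nu\ast\nu}_2$. Since $\supp(\nu_2)\subseteq A$ and $\nu_2\le\Lambda\norm{\nu}_2^2\cdot 1_A$ pointwise, one has $\norm{\nu_2\ast\nu_2}_2\le\Lambda^2\norm{\nu}_2^4\,\norm{1_A\ast 1_A}_2=\Lambda^2\norm{\nu}_2^4\,E(A)^{1/2}$, whence $E(A)\gg M^{-1}\Lambda^{-4}\norm{\nu}_2^{-6}\asymp M^{-3}\norm{\nu}_2^{-6}$. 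The structural fact you were lacking is precisely the $\ell^1$-smallness of the heavy tail $\nu_3$: the atoms above $M^{1/2}\norm{\nu}_2^2$ carry total mass at most $M^{-1/2}$ and can therefore be discarded outright, with no dyadic pigeonhole and no logarithmic loss.
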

We prove Lemma~\ref{lem:7} in the Appendix.

The final ingredient in the proof of Theorem~\ref{thm:8} is Helfgott's \emph{product theorem}\/ for $SL_2(\F_p)$ \cite{helfgott2008growth}.
We quote the version from \cite{kowalski2013explicit}.
\begin{thm}[Growth in $SL_2(\F_p)$]
  \label{thm:10}
% For all $\delta>0$ there exists $\epsilon=\epsilon(\delta)>0$ such that the following holds.
% If $A\subseteq SL_2(\F_p)$, $|A|<p^{3-\delta}$, and $A$ is not contained in a proper subgroup of $SL_2(\F_p)$ then
% \[
% |AAA| \gg_\delta |A|^{1+\epsilon}.
% \]
For $p$ prime and $A\subseteq SL_2(\F_p)$, if $A$ generates $SL_2(\F_p)$, then either
\begin{equation}
  \label{eq:all-of-sl2}
  (A\cup A^{-1}\cup\{e\})^3=SL_2(\F_p)
\end{equation}
or
\begin{equation}
  \label{eq:growth}
  |(A\cup A^{-1}\cup\{e\})^3|\geq |A|^{1+\delta},
\end{equation}
where $\delta=\frac 1{3024}$.
\end{thm}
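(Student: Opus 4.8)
The plan is to reproduce Helfgott's argument, in the streamlined form later given by Pyber--Szab\'o and Breuillard--Green--Tao, whose engine is a ``growth-or-concentration'' dichotomy controlled by maximal tori and fed by the sum--product theorem in $\F_p$. Replacing $A$ by $A\cup A^{-1}\cup\{e\}$ at the outset, we may assume $A=A^{-1}\ni e$; assuming $A^{3}\neq SL_2(\F_p)$, we must show $|A^{3}|\geq |A|^{1+\delta}$. Argue by contradiction: suppose $|A^{3}|\leq K|A|$ with $K=|A|^{\delta}$. Tao's noncommutative Pl\"unnecke--Ruzsa inequalities then give $|A^{m}|\leq K^{O_m(1)}|A|$ for every fixed $m$ and exhibit $H:=A^{3}$ as a $K^{O(1)}$-approximate subgroup (symmetric, containing $e$, with $A$ covered by $K^{O(1)}$ translates of $H$). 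The upshot is that ``staying inside a bounded power of $A$'' costs only a factor $K^{O(1)}$, so any construction producing $\gg|A|^{1+c}$ distinct elements of some $A^{O(1)}$ already contradicts $\delta<c/O(1)$.

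\emph{Escape from subvarieties.} Since $A$ generates $SL_2(\F_p)$, which for large $p$ lies in no proper subvariety of bounded degree, a standard pivoting argument shows that for any such subvariety $V$ a proportion $\gg K^{-O(1)}$ of the elements of some fixed power $A^{O(1)}$ avoid $V$. Taking $V=\{g:\tr(g)^{2}=4\}$ produces $\gg K^{-O(1)}|A|$ regular semisimple elements inside $A^{O(1)}$; fix one such $g_0$, with centraliser a maximal torus $T_0$ (split or nonsplit, $|T_0|=p\mp 1$).

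\emph{The dichotomy.} Let $m:=\max|A^{O(1)}\cap gTg^{-1}|$, the maximum over maximal tori $T$ and conjugators $g$. If $m\leq |A|^{1/3}$ (``$A$ is torus-diffuse''), then using $g_0$ to fix coordinates one extracts genuine sum--product structure: the interaction inside a Borel subgroup $B=T_0U$ between the multiplicative action of $T_0$ and the additive group $U$, together with the trace maps $x\mapsto\tr(x),\tr(g_0 x)$, yields an $\F_p$-set carrying simultaneous additive and multiplicative non-structure, whence the Bourgain--Katz--Tao / Glibichuk--Konyagin sum--product theorem in $\F_p$ forces $|A^{O(1)}|\gg|A|^{1+c}$ for an absolute $c>0$, a contradiction. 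If instead $m>|A|^{1/3}$, let $T'$ witness it and set $A':=A^{O(1)}\cap T'$, an abelian set of size $>|A|^{1/3}$; since $A$ generates $SL_2(\F_p)$, some $h\in A^{O(1)}$ fails to normalise $T'$, so $hA'h^{-1}$ lies in the distinct torus $hT'h^{-1}$, and as two distinct maximal tori meet in $O(1)$ elements the product $A'\cdot hA'h^{-1}\subseteq A^{O(1)}$ has $\gg|A'|^{2}\gg|A|^{2/3}$ elements, again contradicting $|A^{3}|\leq K|A|$ once $\delta$ is small. Either branch refutes near-non-growth, so $|A^{3}|\geq|A|^{1+\delta}$; tracking the constants through the escape bounds, the sum--product exponent, and the Ruzsa calculus yields the explicit value $\delta=\tfrac1{3024}$.

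\emph{Main obstacle.} The delicate heart is the torus-diffuse case: one must convert the purely group-theoretic hypothesis that $A$ does not concentrate in any conjugate of a torus into the hypothesis of the $\F_p$ sum--product theorem, i.e.\ produce a concrete subset of $\F_p$ (built from traces and Borel coordinates of bounded products of $A$) that is simultaneously far from an arithmetic progression and far from a geometric progression, with all sizes and losses controlled by powers of $K$. Interfacing this abelian input cleanly with the noncommutative Ruzsa calculus while keeping every exponent explicit is exactly Helfgott's ``trace amplification,'' and is where essentially all the work lies.
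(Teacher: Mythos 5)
The paper does not prove this theorem: it is quoted verbatim from Kowalski's explicit write-up of Helfgott's product theorem (\cite{kowalski2013explicit}, building on \cite{helfgott2008growth}), and the explicit constant $\delta=\frac{1}{3024}$ is taken from there. So there is no ``paper proof'' to compare against; you have instead attempted to sketch the proof from the literature.

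As a high-level outline of the Helfgott/Pyber--Szab\'o/Breuillard--Green--Tao strategy, most of your sketch is on the right track (symmetrization, Ruzsa calculus to pass to $A^{O(1)}$, escape from subvarieties to find regular semisimple elements, the torus dichotomy, trace amplification via sum--product). But the torus-concentrated branch as you have written it does not close. You take $A':=A^{O(1)}\cap T'$ with $|A'|>|A|^{1/3}$, conjugate once by an $h$ not normalising $T'$, and conclude that $A'\cdot hA'h^{-1}\subseteq A^{O(1)}$ has $\gg |A'|^2\gg |A|^{2/3}$ elements, ``contradicting $|A^3|\leq K|A|$''. This is not a contradiction: $|A|^{2/3}<|A|\leq K|A|$ for any set of size $>1$. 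The actual argument must exhibit $\gg K^{-O(1)}|A|^{2/3}$ \emph{distinct} conjugate tori $gT'g^{-1}$ (using that $A$ generates $SL_2(\F_p)$ and that the stabiliser of $T'$ under conjugation has index $\gg K^{-O(1)}|A|/m$ in $A^{O(1)}$), each containing $\geq m>|A|^{1/3}$ elements of $A^{O(1)}$, and then invoke that two distinct maximal tori meet in $\leq 2$ elements to sum these contributions and get $\gg K^{-O(1)}|A|^{1+c}$ elements in $A^{O(1)}$. A single conjugate does not suffice. Additionally, the explicit value $1/3024$ is not derivable from your sketch as stated: it depends on carefully tracking the Ruzsa-calculus exponents, the escape constants, and the explicit sum--product exponent exactly as in \cite{kowalski2013explicit}, so the phrase ``tracking the constants\ldots yields $\delta=\tfrac{1}{3024}$'' is asserting rather than proving the quantitative content of the theorem.
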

Since \cite[Equation (3.2)]{helfgott2015growth}
\[
|(A\cup A^{-1}\cup\{e\})^3|\leq \left( 3\frac{|A^3|}{|A|} \right)^3|A|,
\]
equation~\eqref{eq:growth} implies
\begin{equation}
  \label{eq:growth-2}
  |A^3|\geq\frac 13|A|^{1+\delta/3}.
\end{equation}
\fxnote{Say why this theorem applies to $PSL_2(\F_p)$?}

\begin{proof}[Proof of Theorem~\ref{thm:8}]
  Let $\gamma=|SL_2(\F_p)|^{-1}$ and set $f(x)=\mu(x)-\gamma$.
By induction, one can show that $f^{(n)}(x)=\mu^{(n)}(x)-\gamma$.
Thus
\[
\norm{f^{(n)}}_2^2=\norm{\mu^{(n)}}_2^2-\gamma.
\]
Thus to prove Theorem~\ref{thm:8}, it suffices to show that
\begin{equation}
  \label{eq:444}
  \norm{f^{(2\ell)}}_2^2\leq\frac 1M \norm{f^{(\ell)}}_2^2,
\end{equation}
where $M^{-1}=C_*K^{-c_*}$ and $\ell$ is a dyadic integer.

By Theorem~\ref{thm:9}
\begin{equation}
  \label{eq:5}
  \norm{f^{(2\ell)}}_2^2\leq p^2\norm{f^{(\ell)}}_2^4,
\end{equation}
so we may assume that
\begin{equation}
  \label{eq:6}
  \norm{f^{(\ell)}}_2^2\geq\frac 1{p^2M},
\end{equation}
otherwise we are done.

Let $r(x)=f^{(\ell)}(x)$.
Then
\begin{equation}
  \label{eq:43}
\norm{r}_1 \leq 1+\norm{\mu^{(\ell)}}_1 = 2.
\end{equation}
Thus if \eqref{eq:444} is false, then we may apply Lemma~\ref{lem:7} with $\nu=r$ to find a subset $P\subseteq \supp(r)$ such that 
\begin{equation}
  \label{eq:63}
  E(P)\gg M^{-9}|P|^3,
\end{equation}
\begin{equation}
  \label{eq:64}
M^{-1}\norm{r}_2^{-2}\ll  |P|\ll M^2\norm{r}_2^{-2},
\end{equation}
and
\begin{equation}
  \label{eq:65}
  |r(g)|\gg M^{-2}\norm{r}_2^2
\end{equation}
for all $g$ in $P$.

Equations \eqref{eq:6} and \eqref{eq:64} imply that
\begin{equation}
  \label{eq:49}
  |P|\ll M^2\norm{r}_2^{-2}\ll M^3p^2.
\end{equation}

We have a lower bound on $|r(x)|=|f^{(\ell)}(x)|=|\mu^{(\ell)}(x)-\gamma|$ and would like a lower bound on $|\mu^{(\ell)}(x)|$.
If $\mu^{(\ell)}(x)<2\gamma$, then $|r(x)|<\gamma$; however by (\ref{eq:6}) and (\ref{eq:65}) this implies that
\[
\frac 1{p^3-p}=\gamma > |r(x)| \gg \frac{\norm{r}_2^2}{M^2}\geq \frac 1{p^2M^3},
\]
hence $M\gg p^{1/3}$.
Choosing, say $M\leq p^{1/4}$, for $p$ sufficiently large, we have
\begin{equation}
  \label{eq:50}
  \mu^{(\ell)}(x)\gg M^{-2}\norm{r}_2^2
\end{equation}
for all $x$ in $P$.

Now we apply Lemma~\ref{lem:1} to $P$ to find a subset $S\subseteq g^{-1}P$ for some $g$ in $P$ such that $|S|\gg M^{-C}|P|$ and $|S^3|\ll M^{C}|S|$ for an absolute constant $C>0$.
By \eqref{eq:49} and $M\leq p^{1/4}$, we may apply Theorem~\ref{thm:10} with $\delta<1/4$ to find that either $S$ is contained in a proper subgroup $\Gamma\leq SL_2(\F_p)$ or \fxnote{Need to fix application of product theorem}
\begin{equation}
  \label{eq:51}
  |S|^{1+\epsilon}\ll_\delta |S^3|\ll M^{C}|S|,
\end{equation}
for some $\epsilon=\epsilon(\delta)>0$.
(We may assume $\delta$ is fixed, say $\delta=1/5$.)

We will choose our parameters so that \eqref{eq:51} cannot happen.
Equation~\eqref{eq:51} implies that
\[
M^{-C\epsilon}|P|^\epsilon\ll |S|^\epsilon\ll M^{C},
\]
hence
\begin{equation}
  \label{eq:54}
  |P|\ll M^{C(1+1/\epsilon)}.
\end{equation}

On the other hand, by the assumption $\norm{\mu}_\infty\leq K^{-1}$, we have
\begin{equation}
  \label{eq:52}
  \norm{r}_\infty\leq \gamma + \norm{\mu^{(\ell)}}_\infty \leq \gamma+\norm{\mu^{(\ell-1)}}_1\norm{\mu}_\infty \leq \frac 2K,
\end{equation}
since we may assume $K\leq\gamma^{-1}=p^3-p$.
Thus 
\begin{equation}
  \label{eq:70}
 \norm{r}_2^2\ll K^{-1}\norm{r}_1\ll K^{-1}.
\end{equation}

By equations \eqref{eq:64} and \eqref{eq:70}, it follows that
\begin{equation}
  \label{eq:53}
  |P| \gg M^{-1}K \gg K^{1-c_*}.
\end{equation}
Combining \eqref{eq:54} and \eqref{eq:53} yields a contradiction if $c_*$ is sufficiently small, depending on $\epsilon$ (hence on $\delta$).

Thus we may assume that \eqref{eq:51} does not hold, and hence $S$ is a contained in a proper subgroup $\Gamma\subseteq SL_2(\F_p)$.
Again, we will derive a contradiction.
Since $S\subseteq g^{-1}P$, we have 
\begin{equation}
  \label{eq:55}
  |g\Gamma\cap P|\geq |S|\gg M^{-C}|P|.
\end{equation}
By \eqref{eq:50} and \eqref{eq:64}, it follows that
\begin{equation}
  \label{eq:56}
  \mu^{(\ell)}(g\Gamma)\geq \frac{|g\Gamma\cap P|}{M^2\norm{r}_2^{-2}} \gg \frac{M^{-C}|P|}{M^3|P|}=M^{-C-3}.
\end{equation}
However, by assumption we have
\[
\mu^{(\ell)}(g\Gamma)\leq \norm{\mu^{(\ell-1)}}_1\sup_x \mu(xg\Gamma) \leq \frac 1K.
\]
If $c_*$ is sufficiently small, this contradicts \eqref{eq:56}.

It follows that \eqref{eq:444} must hold, and the proof is complete.
\end{proof}

% \begin{proof}[Proof of Theorem~\ref{thm:9}]
%   ...just cite Ilya's paper?
% \End{proof}

\section{Proof of Theorem~\ref{thm:15}}
\label{sec:proof-theor-thm:15}

Now we prove Theorem~\ref{thm:15}, which we recall here.
\begin{repthm}{thm:15}
Let $\nu$ be a probability measure on $G=SL_2(\F_p)$ such that
\begin{enumerate}
\item $\norm{\nu}_\infty \leq K^{-1}$
\item for all $g\in G$ and all proper subgroups $\Gamma\leq G$, we have $\nu( g\Gamma)\leq K^{-1}$.
\end{enumerate}
Then for any set $Y\subseteq\prob^1(\F_p)$ and any element $z\in GL_2(\F_p)$
\[
\left|
\sum_g (\delta_z\ast\nu)(g)|Y\cap gY|-\frac{|Y|^2}p
\right|
\leq C_*|Y| p^{-\delta(k)},
\]
where \fxnote{$k$ should be an integer...}
\[
k =\frac{3\log p}{c_*\log K},
\]
and
\[
\delta(k)=\frac 1{2^{k+2}}
\]
for absolute constants $c_*\in(0,1)$ and $C_*\geq 6$.
\end{repthm}

The proof of Theorem~\ref{thm:15} requires pseudo-randomness bounds for the action of $PSL_2(\F_p)$ on the projective line $\prob^1(\F_p)$.

%%% QUASI-RANDOM GROUP BOUND
Let $G$ be a group acting on a set $X$, let $\mu\colon G\to\mathbb{C}$ and let $f\colon X\to\mathbb{C}$.
We define the convolution of $\mu$ and $f$ by \fxnote{move to notation section}
\[
\mu\ast f(y) := \sum_{gy=x}\mu(g)f(y).
\]
\begin{prop}
  \label{prop:1}
Suppose $G$ is a finite group that acts doubly transitively on a set $X$.
Suppose $\mu\colon G\to\CC$ and $f,h\colon X\to\CC$ satisfy $\sum_x f(x)=\sum_x h(x)=0$.
Then
\[
|\ip{\mu\ast f,h}|\leq \sqrt{\frac{|G|}{|X|-1}}\norm{\mu}_2\norm{f}_2\norm{h}_2.
\]
\end{prop}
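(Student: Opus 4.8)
The plan is to exploit the fact that a doubly transitive action gives a permutation representation $\CC[X]$ which decomposes as the trivial representation plus a single irreducible summand $V$; the mean-zero condition places $f$ and $h$ inside $V$. First I would decompose the permutation module: write $\CC[X] = \CC\mathbf{1}\oplus V$, where $\mathbf 1$ is the all-ones function and $V = \{\phi\colon X\to\CC : \sum_x \phi(x)=0\}$ is the mean-zero subspace. Double transitivity of $G$ on $X$ is equivalent to $V$ being an irreducible $G$-module (this is the standard Burnside/Wielandt criterion). Since $\sum_x f(x)=\sum_x h(x)=0$, both $f,h\in V$, and the convolution operator $T_\mu\colon \phi\mapsto \mu\ast\phi$ commutes with the $G$-action and preserves $V$ (it maps mean-zero functions to mean-zero functions, since $\sum_x (\mu\ast\phi)(x) = \left(\sum_g\mu(g)\right)\left(\sum_x\phi(x)\right)$... more carefully, $T_\mu$ restricted to the regular-type structure; the key point is $T_\mu|_V$ is a $G$-endomorphism of the irreducible $V$).

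Second, I would estimate the operator norm of $T_\mu$ acting on $V$ with the $\ell^2$ inner product. Write $\mu = \sum_g \mu(g)\lambda(g)$ where $\lambda(g)$ is the (unitary, for $G$ acting on $X$ — note $G$ acts by permutations so $\lambda(g)$ is unitary on $\ell^2(X)$) translation operator. The standard trick: bound the Hilbert–Schmidt (Frobenius) norm of $T_\mu|_V$ from above and use that $\|T_\mu|_V\|_{\mathrm{op}}^2 \le \|T_\mu|_V\|_{HS}^2 / (\text{rank considerations})$ — actually the cleaner route is $\|T_\mu|_V\|_{HS}^2 \le \dim(V)\cdot\|T_\mu|_V\|_{\mathrm{op}}^2$ in the wrong direction, so instead I would compute $\|T_\mu|_V\|_{HS}^2$ directly and observe that since $T_\mu|_V$ is an intertwiner of the irreducible $V$, by a Schur-type / averaging argument its singular values are controlled: one computes $\operatorname{tr}(T_\mu^* T_\mu)$ restricted to $V$ and relates it to $\|\mu\|_2^2$. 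Concretely, $\operatorname{tr}_{\CC[X]}(T_\mu^*T_\mu) = \sum_{g,g'}\overline{\mu(g)}\mu(g')\operatorname{tr}(\lambda(g)^{-1}\lambda(g')) = |X|\sum_{g}|\mu(g)|^2\cdot(\text{number of fixed points averaging})$; using that $\operatorname{tr}(\lambda(h))$ counts fixed points of $h$ on $X$ and that on the trivial summand the contribution is exactly $\|\mu\|_1^2$-type, subtracting off the trivial part gives $\|T_\mu|_V\|_{HS}^2 = \operatorname{tr}_{\CC[X]}(T_\mu^*T_\mu) - (\text{trivial part})$. Then the Cauchy–Schwarz bound $|\langle T_\mu f, h\rangle| \le \|T_\mu|_V\|_{\mathrm{op}}\|f\|_2\|h\|_2$ combined with $\|T_\mu|_V\|_{\mathrm{op}}\le \|T_\mu|_V\|_{HS}$ would give the result, but the factor $\sqrt{|G|/(|X|-1)}$ strongly suggests the intended argument is: average $T_\mu$ over $G$-conjugation to replace $\mu$ by the uniform measure on a $G$-orbit and invoke that $V$ appears with multiplicity one, so any $G$-intertwiner of $V$ is a scalar — the relevant scalar is $\frac{1}{\dim V}\operatorname{tr}(T_\mu|_V)$, and $\dim V = |X|-1$. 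Running Cauchy–Schwarz on $\operatorname{tr}(T_\mu|_V) = \sum_g \mu(g)(\operatorname{Fix}(g)-1)$ against $\|\mu\|_2$ and $\sqrt{\sum_g(\operatorname{Fix}(g)-1)^2}$, then bounding $\sum_g(\operatorname{Fix}(g)-1)^2 \le \sum_g(\operatorname{Fix}(g)^2 - 1) = |G|\cdot(\text{number of orbits on }X\times X) - |G| = |G|$ (since the action is doubly transitive, $X\times X$ has exactly $2$ orbits, so $\sum_g\operatorname{Fix}(g)^2 = 2|G|$), gives exactly $|\operatorname{tr}(T_\mu|_V)| \le \sqrt{|G|}\,\|\mu\|_2$, whence $\|T_\mu|_V\|_{\mathrm{op}} = |\operatorname{tr}(T_\mu|_V)|/(|X|-1) \le \sqrt{|G|/(|X|-1)^2}\cdot\|\mu\|_2 \le \sqrt{|G|/(|X|-1)}\,\|\mu\|_2$.

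The main obstacle I expect is the claim that $T_\mu|_V$ is a scalar multiple of the identity: this is false for a general $\mu\colon G\to\CC$, since $T_\mu$ need not commute with the $G$-action unless $\mu$ is a class function. So I would instead symmetrize: replace $\mu$ by its average over conjugacy, $\tilde\mu(g) = \frac{1}{|G|}\sum_{h}\mu(h^{-1}gh)$, which is a class function with $\|\tilde\mu\|_2\le\|\mu\|_2$; but this changes $\langle T_\mu f,h\rangle$. The correct fix is to not symmetrize $\mu$ but rather to bound the operator norm directly via the Frobenius norm on $V$: $\|T_\mu|_V\|_{\mathrm{op}}^2 \le \|T_\mu|_V\|_{HS}^2$, and compute $\|T_\mu|_V\|_{HS}^2 = \operatorname{tr}((T_\mu|_V)^*(T_\mu|_V))$. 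Here one must be careful: $T_\mu^* = T_{\mu^\sim}$ where $\mu^\sim(g)=\overline{\mu(g^{-1})}$, so $(T_\mu|_V)^*T_\mu|_V = T_{\mu^\sim * \mu}|_V$ and $\operatorname{tr}_{V}(T_\eta|_V) = \sum_g \eta(g)(\operatorname{Fix}(g)-1)$ for $\eta = \mu^\sim*\mu$; since $\eta(g) = \sum_h\overline{\mu(h)}\mu(hg)$, we get $\operatorname{tr}_V(T_{\mu^\sim*\mu}) = \sum_{h,g}\overline{\mu(h)}\mu(hg)(\operatorname{Fix}(g)-1) = \sum_{h,k}\overline{\mu(h)}\mu(k)(\operatorname{Fix}(h^{-1}k)-1)$. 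Bounding $|\operatorname{Fix}(h^{-1}k)-1|\le$ something won't immediately give the clean bound, so the genuine difficulty is organizing this double sum; the standard resolution is Cauchy–Schwarz in the form $\operatorname{tr}_V(T_{\mu^\sim*\mu}) \le \|\mu\|_2^2 \cdot \max_h \sum_k|\operatorname{Fix}(h^{-1}k)-1|$ — no, rather one uses that the matrix $M_{h,k} = \operatorname{Fix}(h^{-1}k)-1$ is, up to scaling, the orthogonal projection onto $V$ (this is the key identity: $\frac{1}{|G|}\sum_g \lambda(g)\otimes\overline{\lambda(g)}$ acting appropriately), so its operator norm is $|G|/(|X|-1)$ on the relevant subspace, giving $\|T_\mu|_V\|_{HS}^2 = \langle \mu, M\mu\rangle \le \frac{|G|}{|X|-1}\|\mu\|_2^2$. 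Thus $\|T_\mu|_V\|_{\mathrm{op}} \le \|T_\mu|_V\|_{HS} \le \sqrt{|G|/(|X|-1)}\,\|\mu\|_2$, and Cauchy–Schwarz finishes. I would present this projection identity — that $g\mapsto \operatorname{Fix}(g)-1$ is the character of $V$, hence $\frac{1}{|G|}\sum_g(\operatorname{Fix}(g)-1)\lambda(g)$ is the projection onto the $V$-isotypic component — as the crux, since everything else is bookkeeping.
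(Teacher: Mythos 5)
Your final argument is correct, but it takes a genuinely different route from the paper. The paper proves Proposition~\ref{prop:1} by an entirely elementary ``completion'' argument: apply Cauchy--Schwarz in $g$ to get $|\ip{\mu\ast f,h}|\le\norm{\mu}_2\bigl(\sum_g|\sum_x f(g^{-1}x)\overline{h(x)}|^2\bigr)^{1/2}$, expand the square into diagonal and off-diagonal parts, and then use transitivity on $X$ (to evaluate the diagonal) and transitivity on ordered pairs of distinct points (to evaluate the off-diagonal) to compute the inner sum exactly as $\frac{|G|}{|X|-1}\norm f_2^2\norm h_2^2$; the mean-zero hypotheses on $f,h$ enter only at the very end, to convert $|\sum_x f(x)|^2-\norm f_2^2$ into $-\norm f_2^2$. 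Your approach instead packages the same orbit-counting into representation theory: $\CC[X]=\CC\mathbf 1\oplus V$ with $V$ irreducible (equivalent to double transitivity), bound $\norm{T_\mu|_V}_{\mathrm{op}}\le\norm{T_\mu|_V}_{\mathrm{HS}}$, and identify $\norm{T_\mu|_V}_{\mathrm{HS}}^2=\sum_{h,k}\overline{\mu(h)}\mu(k)\chi_V(h^{-1}k)$ as $\frac{|G|}{|X|-1}\norm{P\mu}_2^2$ via the fact that $\frac{\dim V}{|G|}\sum_g\chi_V(g)R(g)$ is the isotypic projection. Both are correct and give the identical constant. The paper's version is self-contained and requires nothing beyond the definition of double transitivity (which is why they present it in the appendix, while citing Gill's quasirandomness result as an alternative), whereas yours isolates the structural reason for the $\sqrt{|G|/(|X|-1)}$ factor ($\dim V=|X|-1$ and the HS/projection bound) and would generalize more transparently to other multiplicity-one situations.

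One stylistic caution: the first half of your write-up pursues a blind alley (treating $T_\mu|_V$ as a scalar, which as you note fails unless $\mu$ is a class function) before arriving at the correct HS-norm route. You should prune this: the correct argument does not need $T_\mu|_V$ to be an intertwiner, only that it preserves $V$ (which follows because $T_\mu$ preserves mean-zero functions and the constants), and the crux is then purely the computation of $\operatorname{tr}_V(T_{\mu^\ast\ast\mu})$ and the eigenvalue bound on the matrix $(h,k)\mapsto\chi_V(h^{-1}k)$.
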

We give an elementary proof of Proposition~\ref{prop:1} in the Appendix, but it also follows from a result of Gill \cite[Proposition 1.4]{gill2016quasirandom} and a lower bound on the degree of doubly-transitive permutation representations \cite[Exercise 2.6]{serre1977linear}.

Since $G=PSL_2(\F_p)$ acts doubly-transitively on $\prob^1(\F_p)$ we have the following bound.
\begin{cor}
  \label{cor:1b}
Let $\mu$ be a probability measure on $PSL_2(\F_p)$ and let $f$ be a function on $\prob^1(\F_p)$ with mean zero.
Then
\[
\norm{\mu\ast f}_2 \leq p \norm{\mu}_2\norm{f}_2.
\]
\end{cor}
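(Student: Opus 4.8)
The final statement to prove is Corollary~\ref{cor:1b}, which specializes Proposition~\ref{prop:1} to the action of $PSL_2(\F_p)$ on $\prob^1(\F_p)$. The plan is to apply Proposition~\ref{prop:1} directly, taking $G = PSL_2(\F_p)$, $X = \prob^1(\F_p)$, $h = f$, and using the fact that this action is doubly transitive. First I would recall that $|\prob^1(\F_p)| = p+1$ and that $|PSL_2(\F_p)| = \frac{1}{2}p(p^2-1) = \frac{1}{2}p(p-1)(p+1)$, since the action is doubly transitive (as noted in Section~\ref{sec:bounds-rich-linear}, $PSL_2(\F)$ acts doubly transitively on $\prob^1(\F)$ by a direct computation). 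Then $\sqrt{|G|/(|X|-1)} = \sqrt{\frac{1}{2}p(p-1)(p+1)/p} = \sqrt{\frac{1}{2}(p^2-1)} \leq p$.

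The key steps in order: (1) verify the hypotheses of Proposition~\ref{prop:1} are met — $G = PSL_2(\F_p)$ is finite and acts doubly transitively on $X = \prob^1(\F_p)$, and $f$ has mean zero by assumption; (2) apply the conclusion with $h = f$ to get $|\ip{\mu\ast f, f}| \leq \sqrt{|G|/(|X|-1)}\,\norm{\mu}_2\norm{f}_2^2$; (3) deduce the operator norm bound $\norm{\mu\ast f}_2 \leq \sqrt{|G|/(|X|-1)}\,\norm{\mu}_2\norm{f}_2$ by the standard argument that testing against $h = \mu\ast f$ (which also has mean zero, since convolution by a probability measure preserves mean-zero-ness for a doubly transitive — indeed transitive — action) gives $\norm{\mu\ast f}_2^2 = \ip{\mu\ast f, \mu\ast f} \leq \sqrt{|G|/(|X|-1)}\,\norm{\mu}_2\norm{f}_2\norm{\mu\ast f}_2$, then divide; (4) bound $\sqrt{|G|/(|X|-1)} = \sqrt{\tfrac12(p^2-1)} \leq p$ and conclude.

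\begin{proof}
Apply Proposition~\ref{prop:1} with $G=PSL_2(\F_p)$ acting on $X=\prob^1(\F_p)$. This action is doubly transitive, as noted above, so the hypotheses are satisfied: $\mu$ is a probability measure (in particular a function $G\to\CC$) and $f$ has mean zero by assumption. Taking $h=\mu\ast f$, we first observe that $\sum_x (\mu\ast f)(x) = \sum_x \sum_{gy=x}\mu(g)f(y) = \sum_{g}\mu(g)\sum_y f(y) = 0$, since $\sum_g \mu(g)=1$ and $\sum_y f(y)=0$; hence $\mu\ast f$ also has mean zero and Proposition~\ref{prop:1} applies with $h=\mu\ast f$. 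This gives
\[
\norm{\mu\ast f}_2^2 = \ip{\mu\ast f,\mu\ast f} \leq \sqrt{\frac{|G|}{|X|-1}}\,\norm{\mu}_2\norm{f}_2\norm{\mu\ast f}_2,
\]
so that $\norm{\mu\ast f}_2 \leq \sqrt{|G|/(|X|-1)}\,\norm{\mu}_2\norm{f}_2$ (the inequality is trivial if $\norm{\mu\ast f}_2=0$). Finally, $|X|-1 = p$ and $|G| = |PSL_2(\F_p)| = \tfrac12 p(p^2-1)$, so
\[
\sqrt{\frac{|G|}{|X|-1}} = \sqrt{\frac{p(p^2-1)}{2p}} = \sqrt{\frac{p^2-1}{2}} \leq p,
\]
which yields $\norm{\mu\ast f}_2 \leq p\,\norm{\mu}_2\norm{f}_2$, as claimed.
\end{proof}

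The main obstacle is essentially bookkeeping: one must be careful that the convolution $\mu\ast f$ is indeed mean-zero so that Proposition~\ref{prop:1} can be reapplied in the duality step, and one must correctly identify $|PSL_2(\F_p)|$ and $|\prob^1(\F_p)|$ to see that the constant $\sqrt{|G|/(|X|-1)}$ is at most $p$. There is no substantive difficulty beyond invoking Proposition~\ref{prop:1}; the real content of the corollary lies entirely in that proposition, whose proof is deferred to the Appendix.
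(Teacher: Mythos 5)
Your proof is correct and follows exactly the paper's approach: apply Proposition~\ref{prop:1} with $h=\mu\ast f$, divide by $\norm{\mu\ast f}_2$, and bound $\sqrt{|G|/(|X|-1)}=\sqrt{(p^2-1)/2}\leq p$. You supply more detail than the paper's one-line proof (e.g.\ checking that $\mu\ast f$ has mean zero), but the argument is the same.
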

\begin{proof}
  Apply Proposition~\ref{prop:1} with $h=\mu\ast f$.
\end{proof}

The following theorem is better than Corollary~\ref{cor:1b} when $|Y|$ is small.
\fxnote{We do not use this, but could add in a variation on Theorem~\ref{thm:1} that uses it}
A second bound is more useful if $|Y|$ is small.
\begin{thm}
  \label{thm:4}
Let $W$ be a subset of $\prob^1(\F_p)$ and let $\mu$ be a probability measure on $PSL_2(\F_p)$.
Then either $\gen{\mu\ast W, W}<4$ or
\begin{equation}
  \label{eq:34}
\gen{\mu\ast W, W}\leq 2\norm{\mu}_\infty^{1/3} \left( |W|-\frac{|W|}{p+1} \right)^2.
\end{equation}

If $f_W(x):=W(x)-|W|/(p+1)$ is the balanced function of $W$, then either $\gen{\mu\ast f_W,f_W}\leq 8$ or
\begin{equation}
  \label{eq:35}
  \gen{\mu\ast f_W,f_W}\leq 4 \norm{\mu}_\infty^{1/3} \left( |W|-\frac{|W|}{p+1} \right)^2.
\end{equation}
\end{thm}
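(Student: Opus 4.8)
The plan is to prove the bound for the indicator $W$ by a ``cube'' (tensor power) argument exploiting that $PGL_2(\F_p)$, hence $PSL_2(\F_p)$, acts simply $3$-transitively on $\prob^1(\F_p)$---equivalently, its diagonal action on the set of distinct triples in $\prob^1(\F_p)^3$ has trivial stabilizers---and then deduce the statement for $f_W$ from the one for $W$. Write $N_g=|W\cap gW|$, so that $\gen{\mu\ast W,W}=\sum_g\mu(g)N_g$. Since $\mu$ is a probability measure and $t\mapsto t^3$ is convex on $[0,\infty)$, Jensen's inequality gives
\[
\gen{\mu\ast W,W}^3\le \sum_g\mu(g)N_g^3=\sum_g\mu(g)\,\#\{(x,y,z)\in W^3:\ g^{-1}x,g^{-1}y,g^{-1}z\in W\}.
\]
I would now split this count according to the coincidence pattern of $(x,y,z)$: all three distinct, exactly two equal, all three equal. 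Writing $X=\gen{\mu\ast W,W}$ and $X_2=\sum_g\mu(g)N_g^2$, the ``exactly two equal'' contribution is $3(X_2-X)$ and the ``all equal'' contribution is $X$, so the decomposition reads $\sum_g\mu(g)N_g^3=(\text{distinct part})+3X_2-2X$, using $N_g(N_g-1)(N_g-2)$ for the distinct part.

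For the distinct part I would fix the image triple $(x,y,z)\in W^3$ of distinct points and bound $\sum_g\mu(g)[g^{-1}(x,y,z)\in W^3]\le\norm{\mu}_\infty\cdot\#\{g:g^{-1}(x,y,z)\in W^3\}$; by simple $3$-transitivity distinct group elements send $(x,y,z)$ to distinct triples, so this last count is at most the number of distinct triples in $W^3$, namely $|W|(|W|-1)(|W|-2)$. Summing over the at most $|W|(|W|-1)(|W|-2)$ choices of image triple bounds the distinct part by $\norm{\mu}_\infty\bigl(|W|(|W|-1)(|W|-2)\bigr)^2$, and the elementary inequality $(|W|-1)(|W|-2)\le|W|^2\bigl(1-\tfrac1{p+1}\bigr)^2$, valid for all $1\le|W|\le p+1$, turns this into $C_0\norm{\mu}_\infty\bigl(|W|-\tfrac{|W|}{p+1}\bigr)^6$ with $C_0=(p/(p+1))^{-2}\le 16/9$ for $p\ge3$. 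Setting $L=\bigl(|W|-\tfrac{|W|}{p+1}\bigr)^2$ and using the power-mean bound $X_2\le\bigl(\sum_g\mu(g)N_g^3\bigr)^{2/3}$, one gets, with $t:=\bigl(\sum_g\mu(g)N_g^3\bigr)^{1/3}\ge X$, the inequality $t^3\le C_0\norm{\mu}_\infty L^3+3t^2-2X$. If $X\ge4$ then $t\ge4$, so $t-3\ge t/4$, which forces $t^3\le 4C_0\norm{\mu}_\infty L^3\le 8\norm{\mu}_\infty L^3$ and hence $\gen{\mu\ast W,W}\le t\le 2\norm{\mu}_\infty^{1/3}L$; otherwise $\gen{\mu\ast W,W}<4$. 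This is exactly the claimed dichotomy for $W$ (with \eqref{eq:34} being the case $\gen{\mu\ast W,W}\ge4$).

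For the balanced function, a direct expansion gives $\gen{g f_W,f_W}=N_g-\tfrac{|W|^2}{p+1}$, hence $\gen{\mu\ast f_W,f_W}=\gen{\mu\ast W,W}-\tfrac{|W|^2}{p+1}\le\gen{\mu\ast W,W}$. So if $\gen{\mu\ast W,W}\ge4$ the first part immediately yields $\gen{\mu\ast f_W,f_W}\le 2\norm{\mu}_\infty^{1/3}L\le 4\norm{\mu}_\infty^{1/3}L$, giving \eqref{eq:35}; and if $\gen{\mu\ast W,W}<4$ then $\gen{\mu\ast f_W,f_W}<4\le 8$ (the looser threshold $8$ in the statement leaves ample room and also accommodates cruder bookkeeping if one prefers to argue with $f_W$ directly).

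The main obstacle is the quantitative handling of the ``error'' (coincidence) terms $3X_2-2X$: conceptually they are lower order precisely because the action on distinct triples is free, but that freeness does not by itself bound them, and one cannot afford to estimate them via point- or pair-stabilizer sizes (those introduce powers of $p$ and would force $|W|\gtrsim\sqrt p$). Instead one must reabsorb them using only the trivial bound $N_g\le|W|$ together with the hypothesis $\gen{\mu\ast W,W}\ge4$; keeping track of the constants so that the threshold is exactly $4$ and the constant exactly $2$ (for $p\ge3$, say) is where all the slack in the argument lives and requires the careful bookkeeping sketched above.
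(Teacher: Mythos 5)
Your proof is correct, and the constant-tracking works out: $\gen{\mu\ast W,W}\ge4$ forces some $N_g\ge4$, hence $|W|\ge4$ and $p\ge3$, so your bound $4C_0=4((p+1)/p)^2\le 64/9<8$ is available exactly when you invoke it. The route, however, differs from the paper's. Both proofs rest on the freeness of the $PSL_2(\F_p)$-action on triples of distinct points in $\prob^1(\F_p)$, but the paper applies Jensen directly to $x\mapsto 3!\binom{x}{3}$ (suitably extended to a convex function on $[0,\infty)$), which at the integers $N_g=|W\cap gW|$ already equals the ordered distinct-triple count $N_g(N_g-1)(N_g-2)$; the coincidence types you separate out in $N_g^3=N_g(N_g-1)(N_g-2)+3N_g^2-2N_g$ are thereby eliminated at the source, and one lands directly on $X(X-1)(X-2)\le\norm{\mu}_\infty\bigl(|W|(|W|-1)(|W|-2)\bigr)^2$ with $X:=\gen{\mu\ast W,W}$, before invoking $X(X-1)(X-2)\ge X^3/4$ for $X\ge4$. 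Your passage through $t\mapsto t^3$ leaves the surplus $3X_2-2X$, which you reabsorb via the power-mean bound $X_2\le t^2$ together with $t-3\ge t/4$; this is sound but the less economical packaging of the same idea. For the balanced function the advantage is yours: the paper splits $f_W=(1-\alpha)W-\alpha W^c$ with $\alpha=|W|/(p+1)$ and reapplies the $W$-case to both $W$ and $W^c$, whereas your exact identity $\gen{\mu\ast f_W,f_W}=\gen{\mu\ast W,W}-|W|^2/(p+1)$ is shorter, reaches the dichotomy at threshold $4$ with constant $2$ (sharper than the stated $8$ and $4$), and avoids the identification of $(1-\alpha)^2|W|^2$ with $\norm{f_W}_2^2$ which the paper's write-up treats only loosely; retaining $(|W|-1)(|W|-2)\le|W|^2(1-1/(p+1))^2$ also lets you genuinely land on the $(|W|-|W|/(p+1))^2$ factor appearing in \eqref{eq:34}, which the paper's displayed estimate reaches only in the cruder form $|W|^2$.
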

We prove Theorem~\ref{thm:4} in the appendix.

\begin{proof}[Proof of Theorem~\ref{thm:15}]
For convenience, write $\nu_0=\delta_z\ast \nu$.
Let
\[
\sigma = \sum_x Y(x)(\nu_0\ast Y)(x)
\]  
and let $f_Y(x)=Y(x) - |Y|/(p+1)$, so that 
\[
\sigma = \frac{|Y|^2}{p+1} +\sum_x Y(x) (\nu_0\ast f_Y)(x) = \frac{|Y|^2}{p+1} +\sigma_*.
\]
By Cauchy-Schwarz, we have
\[
\left|
\sigma-\frac{|Y|^2}{p+1}
\right|^2=
\sigma_*^2 \leq |Y|\sum_x f_Y(x) (\nu_0^\sim\ast \nu_0\ast f_Y)(x).
\]
(Recall that $\nu_0^\sim(x)=\nu_0(x^{-1})$.)

Set $\eta(x)=(\nu_0^\sim\ast \nu_0)(x)=(\nu^\sim\ast\nu)(x)$, so that
\[
\sigma_*^2 \leq |Y|\sum_x f_Y(x) (\eta\ast f_Y)(x).
\]
Note that $\eta$ is a symmetric probability measure on $SL_2(\F_p)$ satisfying $\norm{\eta}_\infty\leq \norm{\nu}_\infty$ and
\begin{equation}
  \label{eq:29}
  \eta(g\Gamma)\leq \norm{\nu}_1 \max_x\nu(x^{-1}g\Gamma)\leq \frac 1{K}
\end{equation}
for all $g\in G$ and proper subgroups $\Gamma\leq G$.

Iterating Cauchy-Schwarz, we have
\begin{equation}
  \label{eq:14}
  \sigma_*^{2^{k+1}}\leq |Y|^{2^{k+1}-1}\sum_x f_Y(x) (\eta^{(2^k)}\ast f_Y)(x).
\end{equation}
Thus by Corollary~\ref{cor:1b} we have
\begin{equation}
  \label{eq:15}
  \sigma_*^{2^{k+1}}\leq p|Y|^{2^{k+1}-1}\norm{f_Y}_2^2 \norm{\eta^{(2^k)}}_2\leq p|Y|^{2^{k+1}}\norm{\eta^{(2^k)}}_2,
\end{equation}
since $\norm{f_Y}_2^2=|Y|-|Y|^2/(p+1)\leq |Y|$.

By (\ref{eq:29}), we may apply Theorem~\ref{thm:8} with $\mu=\eta$ to find
\begin{equation}
  \label{eq:16}
  \norm{\eta^{(2^k)}}_2^2\leq \frac 1{p^3-p} +\frac{C_*^k}{K^{c_*k}}.
\end{equation}

Choosing $k$ such that $K^{c_*k}=p^3$, we have
\begin{equation}
  \label{eq:17}
  \norm{\eta^{(2^k)}}_2\leq \frac {\sqrt{C_*^k+2}}{p^{3/2}}.
\end{equation}
Combining \eqref{eq:17} with \eqref{eq:15}, we have
\begin{equation}
  \label{eq:18}
  \sigma_*^{2^{k+1}}\leq (C_*^{k}+2)^{1/2}p^{-1/2}|Y|^{2^{k+1}}.
\end{equation}
Hence
\begin{equation}
  \label{eq:19}
  \sigma \leq \frac{|Y|^2}p +C_k \frac{|Y|}{p^{\delta}},
\end{equation}
where $\delta = 2^{-(k+2)}$ and
\begin{equation}
  \label{eq:20}
  C_k^{2^{k+1}} =\sqrt{C_*^{k}+2}.
\end{equation}
If $C_*\geq 6$, then a calculation shows that $C_k\leq C_*$.

Since $K^{c_*k}=p^3$, we have 
\[
k=\frac{3\log p}{c_*\log K}.
\]
\end{proof}

\begin{cor}
  \label{cor:8}
Let $\nu$ be a probability measure on $G=SL_2(\F_p)$ such that
\begin{enumerate}
\item $\norm{\nu}_\infty \leq K^{-1}$
\item for all $g\in G$ and all proper subgroups $\Gamma\leq G$, we have $\nu( g\Gamma)\leq K^{-1}$.
\end{enumerate}
Then for any set $Y\subseteq\prob^1(\F_p)$ and any element $z\in GL_2(\F_p)$
\[
\left|
\sum_g (\delta_z\ast\nu)(g)|Y\cap gY|-\frac{|Y|^2}p
\right|
\leq C_*|Y|^{1-\delta}
\]
where 
\[
  \delta= \frac 1{2^{k+1}} \left( \frac{c_*(k-1)\log K}{3\log |Y|}-1 \right),
\]
and $c_*\in(0,1)$ and $C_*\geq 6$ are absolute constants.
\end{cor}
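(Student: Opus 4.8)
The plan is to re-run the proof of Theorem~\ref{thm:15} unchanged up to equation~\eqref{eq:14}, and then, at the final step, to invoke the $\ell^\infty$-pseudorandomness estimate of Theorem~\ref{thm:4} in place of the $\ell^2$-estimate of Corollary~\ref{cor:1b}; Theorem~\ref{thm:4} is the sharper of the two when $|Y|$ is small, which is exactly the regime where Corollary~\ref{cor:8} improves on Theorem~\ref{thm:15}. Concretely, I would set $\nu_0=\delta_z\ast\nu$, $\sigma=\sum_x Y(x)(\nu_0\ast Y)(x)$, and $f_Y(x)=Y(x)-|Y|/(p+1)$, so that $\sigma=|Y|^2/(p+1)+\sigma_*$ with $\sigma_*=\sum_x Y(x)(\nu_0\ast f_Y)(x)$; since $|Y|\le p+1$, the difference $|Y|^2/(p+1)-|Y|^2/p$ is $O(1)$, so everything reduces to bounding $|\sigma_*|$. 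Put $\eta=\nu_0^\sim\ast\nu_0=\nu^\sim\ast\nu$; this is a symmetric probability measure on $SL_2(\F_p)$ with $\norm{\eta}_\infty\le K^{-1}$ and $\eta(g\Gamma)\le K^{-1}$ for every proper subgroup $\Gamma$, just as in \eqref{eq:29}. Iterating Cauchy--Schwarz exactly as in the proof of Theorem~\ref{thm:15} then yields \eqref{eq:14},
\[
\sigma_*^{2^{k+1}}\le |Y|^{2^{k+1}-1}\sum_x f_Y(x)\bigl(\eta^{(2^k)}\ast f_Y\bigr)(x).
\]

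Next I would estimate the inner sum with Theorem~\ref{thm:4}, applied to $\mu=\eta^{(2^k)}$ viewed as a probability measure on $PSL_2(\F_p)$ (projection affects $\norm{\mu}_\infty$ by at most a factor $2$): either the sum is at most $8$, or it is at most $4\norm{\eta^{(2^k)}}_\infty^{1/3}|Y|^2$. To bound $\norm{\eta^{(2^k)}}_\infty$, I would use Young's inequality and the symmetry of $\eta^{(2^{k-1})}$ to write $\norm{\eta^{(2^k)}}_\infty=\norm{\eta^{(2^{k-1})}\ast\eta^{(2^{k-1})}}_\infty\le\norm{\eta^{(2^{k-1})}}_2^2$, and then apply Theorem~\ref{thm:8} to $\eta$ at level $k-1$ to get
\[
\norm{\eta^{(2^{k-1})}}_2^2\le \frac{1}{|SL_2(\F_p)|}+C_*^{k-1}K^{-c_*(k-1)}\le \frac{2}{p^3}+C_*^{k-1}K^{-c_*(k-1)}.
\]
Substituting these into the display above and taking the $2^{k+1}$-th root, the dominant contribution is of the form $|Y|^{1+2^{-(k+1)}}\bigl(K^{-c_*(k-1)}\bigr)^{1/(3\cdot 2^{k+1})}$, up to the factor $C_*^{(k-1)/(3\cdot 2^{k+1})}\le C_*$. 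Requiring this to be at most $|Y|^{1-\delta}$ amounts, after taking logarithms, to the inequality $1+2^{k+1}\delta\le \frac{c_*(k-1)\log K}{3\log|Y|}$, and the largest admissible $\delta$ is precisely the one in the statement.

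The hard part will be the bookkeeping at the two degenerate ends. When Theorem~\ref{thm:4} returns the alternative bound $8$, one only gets $|\sigma_*|\le 2|Y|^{1-2^{-(k+1)}}$, which meets the target as long as $\delta\le 2^{-(k+1)}$; the stated $\delta$ exceeds $2^{-(k+1)}$ only when $K^{c_*(k-1)}>|Y|^6$, but in that range $K$ is large enough that the crude bound $|\sigma_*|\le\norm{Y}_2\norm{f_Y}_2\le|Y|$, or a direct appeal to Theorem~\ref{thm:15}, already suffices, so I would dispose of that range at the outset. Similarly, the $2/p^3$ term coming from Theorem~\ref{thm:8} contributes a quantity of size $\ll|Y|^{2^{k+1}+1}/p$ to $\sigma_*^{2^{k+1}}$, which is negligible precisely when $K^{c_*(k-1)}\ll p^3$ --- the analogue of the balancing choice $K^{c_*k}=p^3$ used in Theorem~\ref{thm:15} --- so I would again fall back on Theorem~\ref{thm:15} in the complementary range $K^{c_*(k-1)}>p^3$. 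Once these two cases are peeled off, all that remains is the routine verification that the accumulated numerical constants (the powers of $2$ and the $4$'s and $8$'s from Cauchy--Schwarz and Theorem~\ref{thm:4}, together with $C_*^{(k-1)/(3\cdot 2^{k+1})}$) stay below the absolute constant $C_*\ge 6$.
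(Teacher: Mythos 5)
Your proposal is correct and follows essentially the same route as the paper: you run the Theorem~\ref{thm:15} argument up to \eqref{eq:14}, replace the Corollary~\ref{cor:1b} step with the $\ell^\infty$-incidence bound of Theorem~\ref{thm:4} applied to $\mu=\eta^{(2^k)}$, bound $\norm{\eta^{(2^k)}}_\infty\le\norm{\eta^{(2^{k-1})}}_2^2$ via Young's inequality and Theorem~\ref{thm:8} at level $k-1$, and then solve for $\delta$; this is exactly the computation \eqref{eq:10}--\eqref{eq:44} in the paper. You also spell out the two degenerate cases (the trivial branch of Theorem~\ref{thm:4} and the regime $K^{c_*(k-1)}>p^3$) more explicitly than the paper does, which is a minor but genuine improvement in rigor rather than a different method.
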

\begin{proof}
  Starting from (\ref{eq:14}) and applying Theorem~\ref{thm:4}, we have
  \begin{equation}
    \label{eq:10}
    \sigma_*^{2^{k+1}}\leq 4|Y|^{2^{k+1}+1}\norm{\eta^{(2^k)}}_\infty^{1/3}.
  \end{equation}
By Cauchy-Schwarz and Theorem~\ref{thm:8}, recalling that $\gamma=1/(p^3-p)$, we have
\begin{equation}
  \label{eq:11}
  \norm{\eta^{(2^k)}}_\infty=  \norm{\eta^{(2^{k-1})}\ast \eta^{(2^{k-1})}}_\infty\leq \norm{\eta^{(2^{k-1})}}_2^2\leq\gamma+C_*^{k-1}K^{-c_*(k-1)}.
\end{equation}
Assuming $K^{c_*(k-1)}\leq p^3$, by (\ref{eq:10}) and (\ref{eq:11}), we have
\begin{equation}
  \label{eq:37}
  \sigma_*^{2^{k+1}}\leq 8C_*^{(k-1)/3}|Y|^{2^{k+1}+1}K^{-c_*(k-1)/3}.
\end{equation}
Choosing
\[
K=|Y|^{3(1+2^{k+1}\delta)/c_*(k-1)},
\]
we have
\begin{equation}
  \label{eq:41}
  \sigma_*\leq (8C_*^{(k-1)/3})^{2^{-k-1}}|Y|^{1-\delta},
\end{equation}
hence
\begin{equation}
  \label{eq:42}
  \left|\sum_xY(x)(\delta_z\ast\nu\ast Y)(x) - \frac{|Y|^2}{p+1} \right|\leq (8C_*^{(k-1)/3})^{2^{-k-1}}|Y|^{1-\delta},
\end{equation}
where
\begin{equation}
  \label{eq:44}
  \delta= \frac 1{2^{k+1}} \left( \frac{c_*(k-1)\log K}{3\log |Y|}-1 \right).
\end{equation}
For instance, if $k= 1 + 3\log|Y|/c_*\log K$, we have $\delta=2^{-k}$.
\end{proof}

\section{Proof of Theorem~\ref{thm:5}}
\label{sec:proof-theor-thm:5}

\subsection{Notation and statement of main lemmas}
\newcommand{\calG}{\mathcal{G}}

For a group $G$ and a finite subset $S\subseteq G$, the \emph{Cayley graph}\/ $\G=\cay(G,S)$ has vertex set $G$ and edges defined by $\{x,sx\}$ with $x\in G$ and $s\in S\cup S^{-1}$; it is $|S\cup S^{-1}|$-regular.
The \emph{girth}\/ of a graph is the length of its shortest cycle; we introduce a related quantity for Cayley graphs.
Let $d(\calG)$ be the smallest positive integer such any two distinct paths in $\calG$ of length $\leq d(\calG)$ starting at the identity have distinct end points.
Since $\calG$ is vertex-transitive, the girth of $\calG$ is either $2d(\calG)$ or $2d(\calG)-1$.
Hence a lower bound for $d(\calG)$ is equivalent to a lower bound for the girth of $\calG$.
\begin{thm}
  \label{thm:11}
For all $0<\alpha<1$ the following holds for all sufficiently large primes $p$:

Let $S\subseteq G=PSL_2(\F_p)$ be a set of transformations such that 
\begin{equation}
  \label{eq:4}
  d(\cay(G,S))\geq \tau_0\log_{|S|}(p)
\end{equation}
for some $\tau_0 > 0$.

Let $\delta=0.25\cdot b_0^{-1/\tau}$ where $0<\tau<\tau_0/2$ and $ b_0>1$ is an absolute constant.\fxnote{$b_0=2^{12/c_*}$ where $c_*$ is the constant from the flattening lemma}
Let $Y\subseteq\prob^1(\F_p)$ be a subset of size $1\leq |Y| \leq p^{1-\delta}$.

If $5\leq |S|\leq p^{\tau}$ and there is an element $z$ in $GL_2(\F_p)$ such that
\begin{equation}
  \label{eq:5}
  \sum_{g\in zS}|Y_g| \geq \alpha |Y||S|,
\end{equation}
then
\[
|S| \leq \left( \frac 2\alpha \right)^{\tau/\delta}.
\]
\end{thm}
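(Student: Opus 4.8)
The plan is to run the argument behind Theorem~\ref{thm:1}: apply Theorem~\ref{thm:15} to a probability measure on $SL_2(\F_p)$ built from $S$, and play the resulting upper bound for $\sum_g(\delta_z\ast\nu)(g)|Y\cap gY|$ against the lower bound \eqref{eq:5}. The one genuinely new ingredient is that the non-concentration in subgroups, which for Theorem~\ref{thm:1} was supplied by Lemma~\ref{lem:8}, must now be extracted from the girth hypothesis \eqref{eq:4}. Lift each element of $S$ to a preimage in $SL_2(\F_p)$, obtaining $S'$ with $|S'|=|S|$; a non-trivial reduced relation in $S'\cup S'^{-1}$ pushes down to one in $S\cup S^{-1}$, so $\cay(SL_2(\F_p),S')$ has girth at least $2d-1$, where $d:=d(\cay(PSL_2(\F_p),S))\ge\tau_0\log_{|S|}p$.

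Take $\nu$ to be the uniform measure on $S'$, so $\norm{\nu}_\infty=|S|^{-1}$ and $\sum_g(\delta_z\ast\nu)(g)|Y\cap gY|=\tfrac1{|S|}\sum_{g\in zS}|Y\cap gY|\ge\alpha|Y|$ by \eqref{eq:5}. The crux (below) is that \eqref{eq:4} forces $|g\Gamma\cap S'|\le|S|^{1-\eta}$ for every proper subgroup $\Gamma\le SL_2(\F_p)$ and an absolute $\eta>0$, so that $\nu(g\Gamma)\le|S|^{-\eta}$. Then Theorem~\ref{thm:15} applies with $K=|S|^{\eta}$ and the given $z$; its exponent $\delta(k_0)=2^{-(k_0+2)}$ with $k_0=\tfrac{3\log p}{c_*\eta\log|S|}$ satisfies $\delta(k_0)\ge\delta=\tfrac14 b_0^{-1/\tau}$ as soon as $k_0\le\tfrac1\tau\log_2 b_0$, i.e.\ $\log|S|\ge\tfrac{3\tau}{c_*\eta\log_2 b_0}\log p$, which for a large enough absolute constant $b_0$ holds whenever $|S|\ge p^{c_1\tau}$ with $c_1$ a small absolute constant. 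In the remaining range $|S|<p^{c_1\tau}$ the girth $2d-1$ is already a large absolute constant (here the hypothesis $\tau<\tau_0/2$ gives the needed room, forcing $\log_{|S|}p$ large relative to $1/\tau$), and one instead takes $\nu$ uniform on non-backtracking words of a length $k\asymp d$, raising $K$ to $|S|^{\Theta(d)}\ge p^{\Theta(\tau_0)}$, at the cost of re-deriving a lower bound of the shape \eqref{eq:5} along the walk; alternatively this range is vacuous when $|S|$ is bounded, since $(2/\alpha)^{\tau/\delta}$ is then a fixed constant.

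For the non-concentration: if $g\Gamma$ contains distinct $s_1,\dots,s_m\in S'$, then each $s_i^{-1}s_j\in\Gamma$ is a reduced word of length $\le2$ in $S'\cup S'^{-1}$. By Dickson's classification (Theorem~\ref{thm:6}) together with Lemma~\ref{lem:6}, $\Gamma$ is either solvable of derived length $\le2$ (inside a Borel, dihedral, or dicyclic group) or has order $\le120$. In the first case $[\Gamma,\Gamma]$ is abelian, so
\[
[[s_1^{-1}s_2,\,s_3^{-1}s_4],\,[s_5^{-1}s_6,\,s_7^{-1}s_8]]=e
\]
(and already $[s_1^{-1}s_2,\,s_3^{-1}s_4]=e$ if $\Gamma$ itself is abelian), which is a reduced relation of length $\le32$ in $S'\cup S'^{-1}$ as soon as $m\ge8$ and $s_1,\dots,s_8$ are in general position; in the bounded-order case a pigeonhole among the $\binom m2$ distinct elements $s_i^{-1}s_j\in\Gamma$, or among products of four of them, similarly yields a non-trivial reduced relation of bounded length. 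Either conclusion contradicts the girth lower bound $2d-1$ (valid for $p$ large) once $m$ exceeds an absolute constant, so $|g\Gamma\cap S'|=O(1)\le|S|^{1-\eta}$. The delicate points — and where I would spend the most effort — are (i) ensuring the commutator words are genuinely non-trivial, i.e.\ do not reduce to the empty word, which needs a general-position argument for the $s_i$, and (ii) pinning down the absolute constants uniformly across the subgroup types and matching them against $2d-1$.

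Finally, combining $\alpha|Y|\le\sum_g(\delta_z\ast\nu)(g)|Y\cap gY|$ with the upper bound $\tfrac{|Y|^2}p+C_*|Y|p^{-\delta}$ from Theorem~\ref{thm:15}, and using $|Y|\le p^{1-\delta}$ (hence $|Y|^2/p\le|Y|p^{-\delta}$), gives $\alpha\le(C_*+1)p^{-\delta}$, so $p^\delta\le(C_*+1)/\alpha$ and therefore $|S|\le p^\tau=(p^\delta)^{\tau/\delta}\le((C_*+1)/\alpha)^{\tau/\delta}$; absorbing the constant $C_*+1$ into the choice of $b_0$ (equivalently, mildly shrinking $\delta$) yields the claimed bound $|S|\le(2/\alpha)^{\tau/\delta}$.
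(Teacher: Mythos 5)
Your sketch correctly identifies the two ingredients that must be extracted from the girth hypothesis (an $\ell^\infty$ bound and a non-concentration bound, feeding into Theorem~\ref{thm:15}), but the route you propose has a genuine gap in its main case, and the fallback you gesture at is in fact the paper's entire argument, left unexecuted.

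The gap: taking $\nu$ to be the uniform measure on $S'$ caps the flattening parameter at $K\le|S|$, no matter how good the non-concentration is. But $|S|$ is allowed to be as small as $5$ while $p\to\infty$. In that regime $k=3\log p/(c_*\log K)\gtrsim\log p$ grows without bound, so the exponent $\delta(k)=2^{-(k+2)}$ coming out of Theorem~\ref{thm:15} tends to $0$, far below the fixed target $\delta=\tfrac14 b_0^{-1/\tau}$, and the final inequality $\alpha\le(C_*+1)p^{-\delta(k)}$ yields nothing. Your own patch for this range — replace $\nu$ by a measure uniform over length-$m$ walks and ``re-derive a lower bound of the shape \eqref{eq:5} along the walk'' — \emph{is} the proof, not a supplement to it, and it is not carried out. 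The paper sets $\nu=\mu^{(m)}$ where $\mu$ is uniform on $S\cup S^{-1}$ and $m\asymp\tau\log_{|S|}p\le\gamma/2$; the Cauchy--Schwarz iteration
\[
\Bigl(\tfrac\alpha2\Bigr)^{2^j}|Y|\ \le\ \sum_g\mu^{(2^j)}(g)\,|Y\cap gY|
\]
is what replaces \eqref{eq:5} for the convolved measure. With that choice, $K=|S|^{m/4}=p^{\tau/4}$ is a fixed power of $p$ \emph{independently of $|S|$}, which is what makes $\delta$ come out as an absolute constant $\tfrac14 b_0^{-1/\tau}$ with $b_0=2^{12/c_*}$. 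The $\ell^\infty$ control is from Kesten's spectral radius bound for the free group walk (Lemma~\ref{lem:3}), and the non-concentration is Lemma~\ref{lem:4}, which uses the polynomial bound $m^6$ from Bourgain--Gamburd on the number of tuples in a ball of the free group satisfying the two-step-solvability identity. Your hand-built non-concentration via commutator words of length $\le 32$ is a weaker substitute with exactly the ``general position'' issue you flag (a specific commutator can reduce to the empty word even when $\Gamma$ is far from abelian), and in any case it is aiming for $|g\Gamma\cap S'|=O(1)$, which is both harder than needed and insufficient for small $|S|$ as above. Also note that the remaining range $|S|<p^{c_1\tau}$ is not vacuous: $(2/\alpha)^{\tau/\delta}$ is a fixed constant in $\alpha$ and $\tau$, while $p^{c_1\tau}\to\infty$, so for large $p$ there is plenty of room between them.
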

The girth condition (\ref{eq:4}) is satisfied by random subsets (asymptotically almost surely) \cite{gamburd2007girth} and projections of generators of non-elementary subgroups of $SL_2(\Z)$ \cite{bourgain2008uniform}.

\begin{thm}
  \label{thm:3}
Let $N\geq 5$ be a positive integer and let $T$ denote the following set of elements of $PSL_2(\F_p)$:
\begin{equation}
  \label{eq:7}
  T=
\left\{
  \begin{pmatrix}
    1 & -2j \\
    2j & 1-4j^2\\
  \end{pmatrix}
\colon 1\leq j \leq N
\right\}.
\end{equation}
Then for all $p\gg 1$
\[
%d(\cay(\phi_p(\gen T),\phi_p (T))) \geq \frac 14\log_N p.
d(\cay(PSL_2(\F_p),T)) \geq \frac 14\log_N p.
\]
\end{thm}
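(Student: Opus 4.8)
The plan is to deduce the girth bound from two facts about the integer matrices $\tilde g_j:=\begin{pmatrix}1 & -2j\\ 2j & 1-4j^2\end{pmatrix}\in SL_2(\Z)$ whose reductions modulo $p$ are the elements of $T$: (a) $\tilde g_1,\dots,\tilde g_N$ freely generate a free subgroup of $PSL_2(\Z)$, so there are no short relations to worry about over $\Z$; and (b) a word of length at most $d:=\lfloor\tfrac{1}{4}\log_N p\rfloor$ in the matrices $\tilde g_j^{\pm1}$ lifts to a matrix all of whose entries have absolute value $<p/2$. Granting (a) and (b), suppose two distinct non-backtracking paths of length $\le d$ starting at the identity in $\cay(PSL_2(\F_p),T)$ ended at the same vertex. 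Reading them as reduced words $w_1,w_2$ in $T\cup T^{-1}$ and lifting to the corresponding products $\tilde w_1,\tilde w_2\in SL_2(\Z)$, we would have $\tilde w_1\equiv\pm\tilde w_2\pmod p$; by (b) every entry of $\tilde w_1\mp\tilde w_2$ is a multiple of $p$ of absolute value $<p$, so $\tilde w_1=\pm\tilde w_2$ over $\Z$, i.e.\ $w_1=w_2$ in $PSL_2(\Z)$. By (a) the $\tilde g_j$ form a free generating set, so $w_1$ and $w_2$ are equal as words and the paths coincide, a contradiction. Hence $d(\cay(PSL_2(\F_p),T))\ge d\ge\tfrac{1}{4}\log_N p$ for $p\gg1$ (the statement being vacuous when $\tfrac14\log_N p<1$).

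The heart of the argument is (a), which I would prove by conjugation. A direct computation with $w=\begin{pmatrix}0 & -1\\ 1 & 0\end{pmatrix}$ gives $w\tilde g_j w^{-1}=h_j:=\begin{pmatrix}1-4j^2 & -2j\\ 2j & 1\end{pmatrix}=b^{\,j}a^{\,j}$, where $a=\begin{pmatrix}1 & 0\\ 2 & 1\end{pmatrix}$ and $b=\begin{pmatrix}1 & -2\\ 0 & 1\end{pmatrix}$; since the same $w$ works for every $j$, it suffices to show $h_1,\dots,h_N$ freely generate. By Sanov's theorem $\langle a,b\rangle=\langle\begin{pmatrix}1 & 0\\ 2 & 1\end{pmatrix},\begin{pmatrix}1 & 2\\ 0 & 1\end{pmatrix}\rangle$ is free of rank $2$ on $\{a,b\}$, hence torsion-free and embedded in $PSL_2(\Z)$. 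Now take a reduced word $W=h_{j_1}^{\epsilon_1}\cdots h_{j_k}^{\epsilon_k}$ with $k\ge1$, $\epsilon_i\in\{\pm1\}$, and no cancelling pair $h_jh_j^{-1}$, substitute $h_j=b^{\,j}a^{\,j}$ and $h_j^{-1}=a^{-j}b^{-j}$, and free-reduce in $F(a,b)$. Each junction between consecutive $h$-syllables is of the form $a^{j_i}b^{j_{i+1}}$ or $b^{-j_i}a^{-j_{i+1}}$ (different letters, no cancellation) or of the form $a^{j_i}a^{-j_{i+1}}$ or $b^{-j_i}b^{j_{i+1}}$ (same letter); in the latter two cases $\epsilon_i\ne\epsilon_{i+1}$, so reducedness forces $j_i\ne j_{i+1}$, the merged block is a nonzero power of $a$ or $b$, and — the key point — it is flanked on both sides by blocks in the opposite letter, so no further cancellation propagates. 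Thus $W$ reduces to a nonempty alternating word in nonzero powers of $a$ and $b$ (with at least $k+1$ syllables), so $W\ne e$. This shows $\{h_j\}$, and therefore $\{\tilde g_j\}$, is a free generating set.

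For (b): each of $\tilde g_j$ and $\tilde g_j^{-1}=\begin{pmatrix}1-4j^2 & 2j\\ -2j & 1\end{pmatrix}$ has all entries of absolute value $\le 4N^2-1<4N^2$ for $1\le j\le N$, and a product of $k$ matrices with entries bounded by $B$ has entries bounded by $(2B)^k$; so a word of length $k\le d$ lifts to a matrix with entries less than $(8N^2)^{\frac14\log_N p}=p^{1/2+3/(4\log N)}$, which for $N\ge5$ is at most $p^{0.83}<p/2$ once $p$ is large. This is exactly the computation that pins down the constant $\tfrac14$. The only genuinely delicate step in the whole proof is the junction bookkeeping in (a); the entry bound (b) and the descent from $\F_p$ to $\Z$ are routine.
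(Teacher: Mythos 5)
Your proof is correct, and at the highest level it follows the same route as the paper: lift $T$ to $PSL_2(\Z)$, show the lifts freely generate a free group, and deduce the girth bound from entry growth. The paper gets there by two citations that you prove inline. For freeness, the paper writes $\tilde g_j = v^j u^{-j}$ with $u,v$ the standard free generators of $\Lambda\leq\Gamma(2)$ and then invokes the fact (Theorem~\ref{thm:111}, an exercise in Magnus--Karrass--Solitar) that $\{ab, a^2b^2, \ldots, a^nb^n\}$ freely generates a rank-$n$ subgroup of $F_2$; your conjugation by $w$ to reach $h_j = b^j a^j$ and the direct free-reduction bookkeeping in $F(a,b)$ is a self-contained proof of exactly that fact (the junction analysis is right: merges can only occur when $\epsilon_i\ne\epsilon_{i+1}$, reducedness of the $h$-word forces the merged exponent to be nonzero, and merges at consecutive junctions involve disjoint $\{a,b\}$-syllables of $h_{j_{i+1}}^{\epsilon_{i+1}}$ so there is no chain reaction). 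For the girth bound, the paper invokes Margulis' theorem $d \geq \log_{n(\tilde T)}(p/2)$ with the operator-norm estimate $n(\tilde T)\le 5N^2\le N^3$; your entrywise bound $(8N^2)^k < p/2$ is the standard elementary proof of that Margulis bound, with a marginally weaker base constant but one that still comfortably delivers the exponent $1/4$ for $N\ge 5$. The trade is one of self-containment versus brevity; nothing is missing.
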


Theorem~\ref{thm:5}, which we recall here, follows from Theorems~\ref{thm:11} and \ref{thm:3}.
\begin{repthm}{thm:5}
There is an absolute constant $b_0>1$ such that the following holds for all $0<\alpha<1$, all sufficiently large primes $p\gg 1$, and all $0<\tau\leq 1/8$.

Let $B=\{1,\ldots,M\}$ and let $C=\{1,\ldots, N\}$.
Suppose that $11\leq\min(M,N)\leq p^\tau$.

Set
\[
S =
\left\{
  \begin{pmatrix}
    1 & -b \\
    c & 1-bc \\
  \end{pmatrix}
\colon
b\in B, c\in C
\right\}.
\]
Let $\delta=0.25 b_0^{-1/\tau}$ and
let $Y\subseteq\prob^1(\F_p)$ be a subset of size $1\leq |Y| \leq p^{1-\delta}$.

If $|Y\cap gY|\geq \alpha|Y|$ for all $g$ in $S$, then
\[
\min(M,N) \leq 2\left( \frac 2\alpha \right)^{\tau/\delta}+1.
\]
\end{repthm}

\begin{proof}
Let $N_0=\floor{\min(M,N)/2}$.
If $T$ is defined as in \eqref{eq:7} with $N=N_0$, then $T\subseteq S$, so $|Y\cap gY|\geq\alpha |Y|$ for all $g$ in $T$.
By Theorem~\ref{thm:3}, we have $d(\cay(G,T))\geq\frac 14\log_{|T|}p$, thus by Theorem~\ref{thm:11} with $\tau_0=1/4$, we have 
\[
\min(\floor{M/2},\floor{N/2}) \leq \left( \frac 2\alpha \right)^{\tau/\delta}
\]
for all $0<\tau<1/8$, provided that $5\leq |T|\leq p^{\tau}$.
\end{proof}

\subsection{Proof of Theorem~\ref{thm:11}}

%Throughout this section, $G=PSL_2(\F_p)$, $S$ is a symmetric subset of $G$, and $d=2k=|S|$.
Throughout this section, $G=PSL_2(\F_p)$, $S$ is a subset of $G$, and $k=|S|$.

%Let $\gamma=\floor{\frac 12\tau\log_d(p)}-1$.
The girth bound (\ref{eq:4}) implies that the products $S^m$ of $S$ grow as quickly as possible for $m\leq \gamma:=d(\cay(G,S))$.
The following lemma is an immediate consequence of the definition of $\gamma$.
\begin{lem}[Girth bound implies locally free]
  \label{lem:2}
For $m\leq\gamma$, the ball of radius $m$ about the identity in $\cay(G,S)$ is isomorphic to the ball of radius $m$ about the identity in the Cayley graph of the free group $F_k$ on $k$ generators.
\end{lem}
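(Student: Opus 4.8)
The plan is to verify that the obvious candidate map is a graph isomorphism. Fix an enumeration $S=\{s_1,\ldots,s_k\}$ (recall $k=|S|$), and let $\psi\colon F_k\to G$ be the homomorphism from the free group on generators $x_1,\ldots,x_k$ that sends $x_i\mapsto s_i$. With generating set $\{x_1,\ldots,x_k\}$, the Cayley graph $\mathcal{T}:=\cay(F_k,\{x_1,\ldots,x_k\})$ is the $2k$-regular tree: its vertices are the reduced words in $x_1^{\pm 1},\ldots,x_k^{\pm 1}$, and its ball $B_m(\mathcal{T})$ of radius $m$ about the identity is exactly the set of reduced words of length at most $m$. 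Since $\psi$ sends each edge $\{w,x_i^{\pm 1}w\}$ of $\mathcal{T}$ to the edge $\{\psi(w),s_i^{\pm 1}\psi(w)\}$ of $\cay(G,S)$, it induces a graph homomorphism $\Psi\colon\mathcal{T}\to\cay(G,S)$; and since distinct generator-letters give distinct neighbours on both sides, $\Psi$ is locally bijective — a covering map from $\mathcal{T}$ onto the connected component of the identity (the induced subgraph on $\langle S\rangle$).

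The lemma would then follow from three points. First, $\Psi$ is injective on $B_m(\mathcal{T})$ whenever $m\le\gamma$: the geodesic of $\mathcal{T}$ from the identity to a reduced word $w$ maps under $\Psi$ to a non-backtracking path from the identity in $\cay(G,S)$ of length $|w|$, and the defining property of $\gamma=d(\cay(G,S))$ says precisely that distinct such paths of length $\le\gamma$ have distinct endpoints; since $\Psi$ is a local bijection, equal images of the paths force equal images of the words. (This is the sense in which the lemma is ``immediate from the definition of $\gamma$''.) Second, $\Psi$ carries $B_m(\mathcal{T})$ onto $B_m(\cay(G,S))$: the inclusion $\subseteq$ holds because a covering map does not increase distances, and $\supseteq$ by lifting, through $\Psi$, a geodesic in $\cay(G,S)$ from the identity to any $g\in B_m(\cay(G,S))$, which yields a path of the same length in $\mathcal{T}$ starting at the identity. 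Third, $\Psi$ preserves edges by construction and reflects them because it is a local bijection: if $w,w'\in B_m(\mathcal{T})$ and $\Psi(w)\sim\Psi(w')$ in $\cay(G,S)$, then $w$ has a neighbour $w''$ in $\mathcal{T}$ with $\Psi(w'')=\Psi(w')$, and injectivity of $\Psi$ on the relevant ball gives $w''=w'$, hence $w\sim w'$. Combining the three points, $\Psi$ restricts to a graph isomorphism from $B_m(\mathcal{T})$ — i.e.\ the ball of radius $m$ about the identity in the Cayley graph of $F_k$ — onto the ball of radius $m$ about the identity in $\cay(G,S)$.

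I do not expect a genuine obstacle here: the whole content is the definition of $\gamma$, and the argument is that definition unpacked. The two points requiring a little care are purely bookkeeping. The edge-reflection step uses injectivity of $\Psi$ on a ball of radius one larger than $m$, so the cleanest formulation either restricts to $m<\gamma$ or absorbs an off-by-one into the (immaterial) constants; every application of Lemma~\ref{lem:2} has ample room in the radius, so this costs nothing. Also, one uses implicitly that $|S\cup S^{-1}|=2|S|$, so that $\cay(G,S)$ has degree $2k$ and ``reduced word'' coincides with ``non-backtracking path''; this holds in the cases of interest — for instance for the set $T$ of Theorem~\ref{thm:3}, where $T\cap T^{-1}=\emptyset$ and no element is an involution — and in general one should replace the $2k$-regular tree by the universal cover of $\cay(G,S)$.
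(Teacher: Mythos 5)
The paper offers no proof of this lemma; it is declared an ``immediate consequence of the definition of $\gamma = d(\cay(G,S))$.'' Your argument therefore supplies the proof the paper suppresses, and it supplies the intended one: the evaluation homomorphism $\psi\colon F_k\to G$ induces a covering map $\Psi$ from the $2k$-regular tree onto (the component of the identity in) $\cay(G,S)$, and the hypothesis $m\le\gamma$ forces $\Psi$ to be injective on the ball of radius $m$, since a collision $\Psi(w)=\Psi(w')$ for distinct reduced words $w,w'$ of length $\le m$ is exactly a pair of distinct non-backtracking paths of length $\le m\le\gamma$ from the identity with the same endpoint.

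Your two caveats are both accurate and both immaterial in context. The edge-reflection step does need injectivity one unit farther out (a neighbour $w''$ of $w\in B_m$ lies in $B_{m+1}$, and ruling out $w''\ne w'$ uses the girth condition at length $m+1$), so the cleanest formulation has $m<\gamma$ or absorbs the off-by-one; this is harmless because Lemma~\ref{lem:3} only needs injectivity of $\Psi$ on $B_m$ to identify $\mu^{(m)}$ with the $m$-step distribution of the random walk on $F_k$, and Lemma~\ref{lem:4} works with $m\le\gamma/2$. And the statement tacitly requires $|S\cup S^{-1}|=2|S|$ (so no involutions and $S\cap S^{-1}=\emptyset$), without which already the ball of radius $1$ in $\cay(G,S)$ has the wrong degree; the paper acknowledges the second half of this immediately after the lemma, and in the one concrete application --- the set $T$ of Theorem~\ref{thm:3}, coming from $v^{-j}u^j$ in a free group on $u,v$ --- both conditions are easily checked, as you note. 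So the proposal is correct and matches the approach the paper has in mind.
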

% The ball of radius $m$ about the identity in $\cay(G,S)$ is in one-to-one correspondence with the ball of radius $m$ in $G$ in the word metric, so we will use these notions interchangeably.
% \begin{proof}
%   If $m\leq \gamma$, then the ball $B$ of radius $m$ about the identity in $\cay(G,S)$ contains no cycles.
% Further, every vertex except those at distance exactly $m$ have degree $2k$.
% Thus $B$ is isomorphic to a ball of radius $m$ in a degree $2k$ homogeneous tree, which is the Cayley graph of the free group $F_k$ on $k$ generators.
% \end{proof}

If $\gamma \geq 2$, then $S\cap S^{-1}=\emptyset$, so $\mu(x)=\frac 1{2k}1_{S\cup S^{-1}}(x)$ is the uniform measure on $S\cup S^{-1}$.
Recall that the $m$-fold convolution of $\mu$ with itself is defined by
\[
\mu^{(m)}(x) = \sum_{y_1\cdots y_m=x}\mu(y_1)\cdots \mu(y_m).
\]
For $m\geq 1$, the measure $\mu^{(m)}$ is a symmetric probability measure on $G$.
\begin{lem}[Bounds for convolutions of the uniform measure on $S$]
  \label{lem:3}
For $\gamma\geq 2$ and  $m\leq\gamma$, we have
\begin{equation}
  \label{eq:8}
  \sum_{g\in G} |\mu^{(m)}(g)|^2 \leq \left( \frac 2k \right)^{m}.
\end{equation}
\end{lem}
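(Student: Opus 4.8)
The plan is to rewrite the left-hand side of \eqref{eq:8} as a return probability for the random walk driven by $\mu$, transfer that return probability to the $2k$-regular tree using the girth hypothesis, and then bound it by Kesten's spectral-radius estimate. First I would record that, since $\gamma\geq 2$, we have $S\cap S^{-1}=\emptyset$, so $\mu$ is the uniform probability measure on the $2k$-element set $S\cup S^{-1}$; in particular $\mu$ is symmetric, and hence so is each convolution power $\mu^{(m)}$. Then $\mu^{(m)}(g)=\mu^{(m)}(g^{-1})$ for all $g$, so
\[
\sum_{g\in G}|\mu^{(m)}(g)|^2=\sum_{g\in G}\mu^{(m)}(g)\,\mu^{(m)}(g^{-1})=(\mu^{(m)}\ast\mu^{(m)})(e)=\mu^{(2m)}(e).
\]
Writing $W_\ell(x,y)$ for the number of walks of length $\ell$ from $x$ to $y$ in the $2k$-regular graph $\cay(G,S)$, this last quantity equals $W_{2m}(e,e)/(2k)^{2m}$.

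Next I would transfer $W_{2m}(e,e)$ to the free group. Any closed walk $e=v_0,v_1,\dots,v_{2m}=e$ of length $2m$ in $\cay(G,S)$ has $d(e,v_j)\leq\min(j,2m-j)\leq m$ for every $j$, so it lies inside the ball of radius $m$ about $e$. Since $m\leq\gamma$, Lemma~\ref{lem:2} supplies a graph isomorphism from this ball onto the ball of radius $m$ about the identity in the Cayley graph of $F_k$, fixing the identity; this isomorphism carries closed walks of length $2m$ based at $e$ bijectively onto closed walks of length $2m$ based at the identity in the Cayley graph of $F_k$, which is the $2k$-regular tree. Hence $W_{2m}(e,e)$ equals the corresponding count on the tree, and if $\tilde\mu$ denotes the uniform measure on the $2k$ standard generators and their inverses in $F_k$, then $\mu^{(2m)}(e)=\tilde\mu^{(2m)}(e)$.

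Finally, $\tilde\mu^{(2m)}(e)=\|\tilde\mu^{(m)}\|_{\ell^2(F_k)}^2\leq\|\tilde\mu\|_{\mathrm{op}}^{2m}$, where $\|\tilde\mu\|_{\mathrm{op}}$ is the operator norm of convolution by $\tilde\mu$ on $\ell^2(F_k)$, which by Kesten's theorem equals $\frac{2\sqrt{2k-1}}{2k}=\frac{\sqrt{2k-1}}{k}$. Putting the three steps together,
\[
\sum_{g\in G}|\mu^{(m)}(g)|^2=\mu^{(2m)}(e)=\tilde\mu^{(2m)}(e)\leq\left(\frac{\sqrt{2k-1}}{k}\right)^{2m}=\left(\frac{2k-1}{k^2}\right)^m\leq\left(\frac 2k\right)^m,
\]
which is the claimed bound.

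The only delicate point is the transfer step: one must observe that a closed walk of length $2m$ stays inside the radius-$m$ ball, so that Lemma~\ref{lem:2} applies with parameter $m$ rather than $2m$; everything else is routine. If one prefers to avoid invoking Kesten's theorem, the last step can instead use the elementary count that the number of closed walks of length $2m$ at a vertex of the $2k$-regular tree is at most $\binom{2m}{m}(2k-1)^m\leq 4^m(2k-1)^m$, which yields the same conclusion $\left(\frac{2}{k}\right)^m$ after dividing by $(2k)^{2m}$.
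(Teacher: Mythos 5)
Your proof is correct and follows essentially the same route as the paper's: reduce the $\ell^2$ sum to a return probability, transfer to the free group using Lemma~\ref{lem:2}, and conclude with Kesten's spectral radius bound $\rho=\sqrt{2k-1}/k\leq\sqrt{2/k}$. The only cosmetic difference is the order of operations: the paper identifies $\mu^{(m)}$ with the random-walk kernel $p^{(m)}$ on $F_k$ first and then computes $\sum_x|p^{(m)}(e,x)|^2=p^{(2m)}(e,e)$, whereas you compute $\sum_g|\mu^{(m)}(g)|^2=\mu^{(2m)}(e)$ inside $G$ and then transfer the closed-walk count, carefully noting that a closed walk of length $2m$ never leaves the radius-$m$ ball, which is the observation both arguments rest on.
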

\begin{proof}
The claimed bound is trivial if $k=1$, so without loss of generality, assume that $k\geq 2$.

By Lemma~\ref{lem:2}, when $m\leq\gamma$, $\mu^{(m)}(x)$ is equal to the probability $p^{(m)}(e,x)$ of arriving at $x$ after $m$ steps from the identity in the uniform random walk on $F_k$; see \cite[p. 637]{bourgain2008uniform}.
(By abuse of notation, we will use $x$ to denote an element of $F_k$ as well as the corresponding element in the ball of radius $m$ about the identity in $G$.)

Since $\mu$ is symmetric, we have
\[
\mu^{(m)}(x)=\mu^{(m)}(x^{-1})=p^{(m)}(e,x^{-1})=p^{(m)}(x,e).
\]
Thus the probability of return to the identity in $2m$ steps is
\[
p^{(2m)}(e,e)=\sum_{x\in G}p^{(m)}(e,x)p^{(m)}(x,e) = \sum_{x\in G} |\mu^{(m)}(x)|^2.
\]
By \cite[Lemma 1.9]{woess2000random}, $p^{(2m)}(e,e)\leq\rho^{2m}$, where $\rho$ is the spectral radius of $F_k$.
Kesten~\cite{kesten1959symmetric} proved that if $k\geq 2$ then
\[
\rho\leq \left( \frac{2k-1}{k^2} \right)^{1/2}\leq \left( \frac 2k \right)^{1/2},
\]
which completes the proof.
\end{proof}

\begin{lem}[Non-concentration in proper subgroups]
  \label{lem:4}
Let $H$ be a proper subgroup of $G$ and let $g$ be an element of $G$.

For $2\leq m\leq \gamma/2$ we have
\[
|\supp(\mu^{(m)})\cap gH| \leq m^6.
\]
\end{lem}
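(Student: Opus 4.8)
The plan is to exploit the ``locally free'' structure provided by the girth hypothesis (Lemma~\ref{lem:2}). For $2 \le m \le \gamma/2$, every element in $\supp(\mu^{(m)})$ is a reduced word of length at most $m$ in the generators $S \cup S^{-1}$, and by Lemma~\ref{lem:2} the ball of radius $m$ (indeed of radius $2m \le \gamma$) embeds in the free group $F_k$: distinct reduced words of length $\le m$ give distinct elements of $G$. So it suffices to bound the number of reduced words $w, w'$ of length $\le m$ with $w = g h$ for some fixed $g \in G$ and $h \in H$; equivalently, to bound the number of \emph{pairs} of reduced words $w, w'$ of length $\le m$ such that $w^{-1} w' \in g^{-1} g \, H'$-type coset --- more cleanly: $\supp(\mu^{(m)}) \cap gH$ has size at most the number of reduced words $w$ of length $\le m$ with $w \in gH$, and if $w_0$ is one such word then every other such $w$ satisfies $w_0^{-1} w \in H$, with $w_0^{-1} w$ a word of length $\le 2m \le \gamma$, hence (again by local freeness) a \emph{reduced} word representing a nontrivial element of $H \cap S^{2m}$. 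Thus the count is controlled by $|H \cap (\text{reduced words of length} \le 2m)|$, and I want to show this is $\le m^6$.

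The key step is therefore: a proper subgroup $H \le PSL_2(\F_p)$ cannot contain too many short reduced words. Here I would invoke Dickson's classification (Theorem~\ref{thm:6}): a proper subgroup of $PSL_2(\F_p)$ is either (i) of bounded order ($A_4, S_4, A_5$, so $\le 60$ elements, giving $\le 60 \le m^6$ trivially once $m \ge 2$), (ii) dihedral, or (iii) contained in a Borel (metabelian). For the dihedral and Borel cases I would argue that these groups are ``two-generated in a very restricted way'' / have few elements of each word-length because they are virtually abelian. Concretely, a set of reduced words of length $\le 2m$ lying in a metabelian group: pushing the commutator structure, the number of distinct reduced words mapping into such a group grows polynomially in $m$ rather than exponentially. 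The cleanest route is the standard observation (used e.g.\ in girth/escape-from-subgroup arguments, cf.\ \cite{bourgain2008uniform}): in the free group $F_k$, the preimage of a solvable-by-finite quotient, intersected with the ball of radius $2m$, has size $\ll m^{O(1)}$ --- and one tracks the exponent to get $6$. Alternatively, one shows directly that any two reduced words $w_1, w_2, w_3$ of length $\le 2m$ in $H$ satisfy enough algebraic relations (since $H$ embeds in $PGL_2$, any three elements fixing a common point or preserving a common pair of points are highly constrained) to force the count down.

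The main obstacle I anticipate is getting the \emph{explicit} exponent $6$ (rather than just $m^{O(1)}$) with a clean combinatorial argument, and handling the dihedral case uniformly with the Borel case. My expected approach for the exponent: a reduced word of length $\ell$ in $F_k$ is determined by its sequence of letters; the image in a metabelian group $H$ factors through $H/[H,H]$ (abelian) and then $[H,H]$; abelianized, a word of length $\le 2m$ lands in a set of size $\ll m^2$ (two ``coordinates'' each of size $\ll m$ in the relevant rank-$\le 2$ abelian quotient of a Borel), and fixing the abelianization, the commutator part varies over $\ll m^2$ values as well, with a further $\ll m^2$ from the length/parity bookkeeping --- multiplying to $m^6$. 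For the dihedral case, $H$ is metabelian too (cyclic-by-$\mathbb Z/2$), so the same bookkeeping applies with a factor $2$ absorbed. I would present the metabelian estimate as the technical heart, verify $m \ge 2$ makes $m^6 \ge 60$ so the bounded-order case is subsumed, and conclude.

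\medskip

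\noindent\emph{(Remark: a slicker alternative, if available from the cited literature, is to quote directly the non-concentration estimate for random walks on free groups projected into $SL_2(\F_p)$ from \cite{bourgain2008uniform} or \cite{bourgain2008uniform-a}, which already packages ``girth $\ge \gamma$'' $\Rightarrow$ ``$\mu^{(m)}$ does not concentrate on cosets of proper subgroups for $m \le \gamma/2$'' with a polynomial-in-$m$ bound; the proof would then reduce to matching the exponent.)*
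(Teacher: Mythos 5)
Your reduction is the same one the paper uses: you fix one element $w_0\in S\cap gH$ (with $S=\supp(\mu^{(m)})$) and translate, so that the problem becomes bounding $|S^{-1}S\cap H|$, a set of reduced words of length at most $2m$ in the locally-free ball (the hypothesis $m\le\gamma/2$ is exactly what makes $S^{-1}S$ embed in $F_k$). You also invoke Dickson's classification in the same way and dispatch the bounded-order case by $m^6\geq 64>60$ for $m\geq 2$. So far this matches the paper line for line.

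The gap is in what you call the ``technical heart'': bounding the number of reduced words of length at most $2m$ in $F_k$ whose image lies in a fixed two-step solvable subgroup $H\leq PSL_2(\F_p)$. Your proposed abelianization count does not hold up as stated. You want to say a word $w\in B_{2m}$ with $\phi(w)\in H$ has its image in $H/[H,H]$ constrained to $\ll m^2$ values, then $\ll m^2$ more from $[H,H]$, etc. But the length of $w$ places no such constraint on $\phi(w)$: the map $\phi$ restricted to $B_{2m}$ is a bijection onto its image (since $2m\leq\gamma$), so for each element $h\in H\cap\phi(B_{2m})$ there is exactly one word, and the abelianization of $F_k$ has rank $k$, not $2$; there is no natural map $F_k\to H/[H,H]$ that lets you trade word-length for coordinates in a rank-$2$ group. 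What actually constrains the count is the centralizer structure of free groups, not abelianization. The paper's route is: two-step solvability gives the universal relation $[[g_1,g_2],[g_3,g_4]]=1$ for all $g_i\in H$ (equation~\eqref{eq:9}), and then it quotes Proposition~8 of Bourgain--Gamburd \cite{bourgain2008uniform}, which is precisely the nontrivial combinatorial fact that a subset of the ball $B_m\subset F_k$ on which this relation holds identically has size at most $m^6$. That proposition is the ingredient your sketch is missing; the remark at the end of your proposal correctly points at the reference, but it is not an ``alternative'' to your argument --- it is the step your argument needs and does not supply. (You also do not need the extra $\ll m^2$ ``length/parity bookkeeping'' factor: injectivity of $\phi$ on $B_{2m}$ makes the word-to-element correspondence one-to-one.)
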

\begin{proof}
  If $m\leq \gamma/2$, then the support $S$ of $\mu^{(m)}$ is isomorphic to the ball $B_{m/2}$ of radius $m/2$ in $F_k$, hence $S^{-1}S$ is isomorphic to $B_m$.

Since $|S^{-1}\cap Hg^{-1}|=|S\cap gH|$, in particular, $S^{-1}\cap Hg^{-1}$ is non-empty (otherwise we are done), hence
\[
|S\cap gH|\leq |(S^{-1}\cap Hg^{-1})(S\cap gH)| \leq |S^{-1}S\cap H|.
\]

By Theorem~\ref{thm:6}, if $|H|>60$ then $H$ is two step solvable, hence
\begin{equation}
\label{eq:9}
  [[g_1,g_2],[g_3,g_4]]=1
\end{equation}
for all $g_1,\ldots,g_4$ in $H$.
By \cite[Proposition 8]{bourgain2008uniform}, the number of elements in $B_m$ satisfying (\ref{eq:9}) is at most $m^6$.

If $|H|\leq 60$, then the bound still holds, since $m^6\geq 64$.
\end{proof}

\begin{proof}[Proof of Theorem~\ref{thm:11}]
  Let $\mu$ be the uniform measure on $S\cup S^{-1}$.
The hypothesis (\ref{eq:5}) translates to
\[
\sum_g (\delta_z\ast \mu)(g)|Y_g|\geq \frac{\alpha|Y||S|}{|S\cup S^{-1}|}\geq \frac{\alpha}2 |Y|.
\]
By Cauchy-Schwarz and the inclusion
\[
Y_g\cap Y_{g'}= Y\cap gY\cap g'Y\subseteq gY\cap g'Y=gY_{g^{-1}g'},
\]
we have
\begin{align*}
  \left( \frac\alpha2 \right)^2|Y| &\leq \sum_{g,g'}(\delta_z\ast \mu)(g)(\delta_z\ast \mu)(g')|Y_g\cap Y_{g'}|\\
&\leq \sum_g ((\delta_z\ast \mu)^\sim\ast(\delta_z\ast \mu))(g)|Y\cap Y_{g}|
= \sum_g \mu^{(2)}(g)|Y_g|,
\end{align*}
where $f^\sim(x):=f(-x)$ is the adjoint of function $f$.

Iterating this, we find that
\begin{equation}
  \label{eq:24}
\left( \frac\alpha2 \right)^{2^j}|Y| \leq \sum_g \mu^{(2^j)}(g)|Y_g|.
\end{equation}
Let $m$ denote a dyadic integer less than or equal to $\gamma/2$.
(Recall $\gamma = d(\cay(G,S))\geq \tau_0\log_{|S|}p$.)
We will choose $m$ presently.

Let $\nu=\mu^{(m)}$.
By Lemma~\ref{lem:3},
\begin{equation}
  \label{eq:22}
  \norm{\nu}_\infty \leq \norm{\mu^{(m/2)}}_2^2\leq \left( \frac 2{|S|} \right)^{m/2}.
\end{equation}
If $g\in G$ and $\Gamma$ is a proper subgroup of $G$ then by Lemma~\ref{lem:4}
\begin{equation}
  \label{eq:23}
  \nu(g\Gamma) \leq |g\Gamma\cap\supp(\mu^{(m)})|\norm{\nu}_\infty \leq m^6 \left( \frac 2{|S|} \right)^{m/2}.
\end{equation}

Define $K^{-1}= |S|^{-m/4}$ and define $\tau$ by $m=\tau\log_{|S|}p$.
We want $K^{-1}\geq m^6(2/|S|)^{m/2}$, so that the hypotheses of Theorem~\ref{thm:15} are satisfied.
Thus we need
\begin{equation}
  \label{eq:12}
  \frac 1{|S|^{m/4}}\geq m^6 \left( \frac 2{|S|} \right)^{m/2}\quad\mbox{or}\quad |S|^{m/2}\geq m^{12} 2^{m}.
\end{equation}
By the definition of $\tau$, \eqref{eq:12} is equivalent to
\begin{equation}
  \label{eq:13}
  p^{\tau/2} \geq (\tau\log_{|S|}p)^{12} p^{\tau\log_{|S|} 2} \quad\mbox{or}\quad p^{\tau(1-\log_{|S|}4)} \geq (\tau\log_{|S|}p)^{24}.
\end{equation}
If $|S|\geq 5$, then \eqref{eq:13} is satisfied for $p\gg 1$.

Since
\[
\norm{\nu}_\infty, \nu(g\Gamma) \leq  m^6 \left( \frac 2{|S|} \right)^{m/2} \leq \frac 1K,
\]
by Theorem~\ref{thm:15} we have
\begin{equation}
  \label{eq:25}
  \left( \frac\alpha2 \right)^m \leq\frac 1{|Y|}\sum_g \nu(g)|Y\cap gY| \leq \frac{|Y|}p + C_*  p^{-\delta},
\end{equation}
where $\delta=2^{-(k+2)}$ and $k=3\log p /(c_*\log K)$.

By the definition of $\tau=m/\log_{|S|}p$ we have
\begin{equation}
  \label{eq:26}
  k=\frac{3\log p}{c_*\log K}= \frac{12\log p}{c_* m\log |S|} = \frac {12}{c_*}\,\tau^{-1}.
\end{equation}

Suppose that $|Y|\leq p^{1-\delta}$.
Then
\begin{equation}
  \label{eq:27}
  \left( \frac\alpha2 \right)^m\leq \frac{C_*+1}{p^\delta}.
\end{equation}

Since
\[
\left( \frac 2\alpha \right)^m = p^{\tau\log_{|S|}(2\alpha^{-1})},
\]
equation \eqref{eq:27} implies $\delta\leq \tau\log_{|S|}(2\alpha^{-1})$, otherwise there is a contradiction for large $p$, since $C_*+1$ is an absolute constant.
Thus we have
\begin{equation}
  \label{eq:28}
\log|S| \leq \frac\tau\delta \log (2\alpha^{-1})\implies |S| \leq \left( \frac 2\alpha \right)^{\tau/\delta}.
\end{equation}

By \eqref{eq:26}, we have \[\delta=2^{-(k+2)}=\frac 14 b_0^{-1/\tau}\] for  $b_0=2^{12/c_*}$.
Since $m\leq \gamma/2$, we can take $0<\tau\leq \tau_0/2$.
Finally, since $m\geq 1$, we need $1\leq \tau\log_{|S|}p$, which follows from $|S|\leq p^\tau$.
\end{proof}

\subsection{Proof of Theorem~\ref{thm:3}}

% DEFN OF NORM
Given a matrix
\[
g=
\begin{pmatrix}
  a & b \\
  c & d \\
\end{pmatrix}
\]
in $SL_2(\Z)$, we use $\norm g$ to denote its norm as an operator on $\ell^2(\R^2)$:
\[
\norm g := \sup_{|x|_2=1}|gx|_2,
\]
where $|(x_1,x_2)|_2=\sqrt{x_1^2+x_2^2}$.
For a finite collection of matrices $S\subseteq SL_2(\Z)$, we define
\[
n(S):= \max_{g\in S}\norm g.
\]
If $S\subseteq PSL_2(\Z)$, we define $n(S)=n(S')$, where $S'\subseteq SL_2(\Z)$ is some collection of matrices representing the elements of $S$.
Since $\norm g = \norm{-g}$, this is well defined.

If $S\subseteq PSL_2(\F_p)$, we define
\[
n(S) := \min\{n(\tilde S)\colon \tilde S\subseteq PSL_2(\Z), \tilde S \equiv S\mod p\}.
\]

We will use the notation $\tilde G = PSL_2(\Z)$ and $\tilde S$ for subsets of $\tilde G$; the map $\phi_p\colon \tilde G\to G=PSL_2(\F_p)$ is defined by reduction of the entries of matrices representing elements of $\tilde G$ modulo $p$.
Thus $S=\phi_p(\tilde S)$ in the above definition of $n(S)$.

A direct computation shows that
\begin{equation}
  \label{eq:30}
   \frac 12\sqrt{a^2+b^2+c^2+d^2}\leq
\Norm{
  \begin{pmatrix}
    a & b\\
    c & d\\
  \end{pmatrix}
}
\leq \sqrt{a^2+b^2+c^2+d^2},
\end{equation}
thus $|S| \ll n(S)^4$.

%%%%%%%%%%%%%%%%%%%%%%%%%%%%%%%%%%%%%%%%

The following theorem of Margulis \cite[Section 6]{margulis1982explicit} gives a lower bound for $d(\calG)$ (and hence the girth) of the Cayley graph $\calG=\cay(G,S)$ in terms of the norm of $S$.
See also \cite[Section 2]{gamburd2002spectral}.
\begin{thm}[Girth bound for projections of free groups]
  \label{thm:13}
If the group $\Lambda$ generated by $\tilde S\subseteq\tilde G$ is free, then 
\[
d(\cay(\phi_p(\Lambda),\phi_p(\tilde S))) \geq \log_{n(\tilde S)} \left( \frac p2 \right).
\]
Hence
\[
\girth(\cay(\phi_p(\Lambda),\phi_p(\tilde S))) \geq 2\log_{n(\tilde S)} \left( \frac p2 \right)-1.
\]
\end{thm}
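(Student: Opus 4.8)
The plan is to deduce Theorem~\ref{thm:3} from the Margulis-type girth bound of Theorem~\ref{thm:13}. The starting point is the factorization
\[
\begin{pmatrix} 1 & -2j \\ 2j & 1-4j^2 \end{pmatrix} = A^j B^{-j}, \qquad A = \begin{pmatrix} 1 & 0 \\ 2 & 1 \end{pmatrix}, \quad B = \begin{pmatrix} 1 & 2 \\ 0 & 1 \end{pmatrix},
\]
which one checks by direct multiplication, using $A^j=\left(\begin{smallmatrix}1&0\\2j&1\end{smallmatrix}\right)$ and $B^{-j}=\left(\begin{smallmatrix}1&-2j\\0&1\end{smallmatrix}\right)$. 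By Sanov's theorem the images of $A$ and $B$ in $PSL_2(\Z)$ generate a free group of rank $2$ with free basis $\{A,B\}$. I would then take $\tilde T=\{A^jB^{-j}\colon 1\le j\le N\}\subseteq PSL_2(\Z)$ and $\Lambda=\langle\tilde T\rangle$, noting that $\phi_p(\tilde T)=T$ and that $d(\cay(\cdot,\cdot))$ depends only on the connected component of the identity, so $d(\cay(PSL_2(\F_p),T))=d(\cay(\phi_p(\Lambda),\phi_p(\tilde T)))$. Provided $\tilde T$ freely generates $\Lambda$ (see below), Theorem~\ref{thm:13} gives $d(\cay(PSL_2(\F_p),T))\ge\log_{n(\tilde T)}(p/2)$, and by \eqref{eq:30}, $n(\tilde T)=\max_{1\le j\le N}\norm{A^jB^{-j}}\le\max_j\sqrt{16j^4+2}\le\sqrt{32}\,N^2$. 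Therefore
\[
d(\cay(PSL_2(\F_p),T)) \ \ge\ \frac{2(\log p - 1)}{5+4\log N},
\]
and this exceeds $\tfrac14\log_N p=\tfrac{\log p}{4\log N}$ precisely when $(4\log N-5)\log p\ge 8\log N$; since $N\ge 5$ gives $4\log N-5>4>0$ and $8\log N/(4\log N-5)$ is decreasing in $N$, this holds for all $p$ above an absolute threshold, i.e. for $p\gg 1$.

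It remains to show that $\{a^jb^{-j}\colon 1\le j\le N\}$ is a free basis of the subgroup it generates in the abstract free group $F_2=F(a,b)$ (applied with $a=A$, $b=B$). I would prove this by Stallings folding: build the rose with $N$ petals, the $j$th petal being the loop reading $a^jb^{-j}$ (namely $j$ consecutive forward $a$-edges followed by $j$ consecutive backward $b$-edges), and fold it. Tracking the forced identifications outward from the basepoint $v_0$, the first $a$-edges of all petals are identified and the last $b$-edges of all petals are identified; since for the first petal these two edges already share their far endpoint, everything collapses onto a single segment $v_0,v_1,\dots,v_N$ carrying exactly one $a$-edge and one $b$-edge between each consecutive pair $v_m,v_{m+1}$, with the $j$th petal becoming the loop running up the segment to $v_j$ via $a$'s and back down via $b$'s. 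The resulting folded graph is connected with $N+1$ vertices and $2N$ edges, hence has first Betti number $2N-(N+1)+1=N$; so $\langle a^jb^{-j}\colon 1\le j\le N\rangle$ is free of rank $N$, and an $N$-element generating set of a rank-$N$ free group is automatically a free basis (Hopficity of free groups).

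The main obstacle is this last claim. One must verify carefully that the folding genuinely collapses the rose onto the segment and produces nothing smaller — equivalently, that the graph just described is already folded and admits no further identifications — which is a short but somewhat delicate bookkeeping argument; it is the only place where the specific shape $a^jb^{-j}$ (and ultimately the factor $2$ in the matrices, which is what makes $\langle A,B\rangle$ free) is used. An alternative to folding — checking the Nielsen-type cancellation behaviour of the words $w_j^{\pm1}$ at their junctions, or running ping-pong for $\langle A,B\rangle$ on $\prob^1(\R)$ and reading off the normal form of a word in the $w_j$ — would also work, but I expect the folding argument to be the cleanest to write up.
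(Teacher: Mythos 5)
You were asked to prove Theorem~\ref{thm:13} itself --- the Margulis-type statement that freeness of $\Lambda=\langle\tilde S\rangle\leq PSL_2(\Z)$ forces $d(\cay(\phi_p(\Lambda),\phi_p(\tilde S)))\geq\log_{n(\tilde S)}(p/2)$ --- but your proposal never proves it: it \emph{invokes} Theorem~\ref{thm:13} as a black box ("Theorem~\ref{thm:13} gives $d(\cay(PSL_2(\F_p),T))\ge\log_{n(\tilde T)}(p/2)$") and uses it to deduce Theorem~\ref{thm:3}. As an argument for the statement under review this is circular, so the entire content of the theorem is missing. What you have actually written is, in substance, the paper's proof of Theorem~\ref{thm:3} (norm computation for the matrices $v^ju^{-j}$, reduction to Theorem~\ref{thm:13}) together with a Stallings-folding proof of Theorem~\ref{thm:111}, which the paper instead cites as an exercise in Magnus--Karrass--Solitar; that material is fine as far as it goes, but it is the wrong target.

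For completeness, here is the argument Theorem~\ref{thm:13} actually needs. Suppose $w_1\neq w_2$ are distinct reduced words of length at most $m$ in $\tilde S\cup\tilde S^{-1}$ whose endpoints in $\cay(\phi_p(\Lambda),\phi_p(\tilde S))$ coincide. Freeness of $\Lambda$ gives $w_1\neq w_2$ in $PSL_2(\Z)$, i.e.\ $w_1\neq\pm w_2$ for any choice of integer matrix lifts, while $\phi_p(w_1)=\phi_p(w_2)$ gives $w_1\equiv\varepsilon w_2\pmod p$ for some sign $\varepsilon$. Hence some entry of $w_1-\varepsilon w_2$ is a nonzero multiple of $p$, so some entry of $w_1$ or of $w_2$ has absolute value at least $p/2$; since the operator norm of a matrix dominates the absolute value of each entry (apply the matrix to a standard basis vector), $\max(\norm{w_1},\norm{w_2})\geq p/2$. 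On the other hand, submultiplicativity of the operator norm together with $\norm{g^{-1}}=\norm{g}$ for $g\in SL_2(\Z)$ yields $\norm{w_i}\leq n(\tilde S)^m$. Combining the two bounds gives $n(\tilde S)^m\geq p/2$, i.e.\ $m\geq\log_{n(\tilde S)}(p/2)$, which is the first inequality; the girth bound then follows from the observation, already made in the paper, that the girth of a vertex-transitive Cayley graph is $2d(\calG)$ or $2d(\calG)-1$. None of these steps appear in your write-up, and they are exactly where the hypotheses "$\Lambda$ free" and "$n(\tilde S)$ small" enter.
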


Let $F_2=\gen{a,b}$ be the free group on two generators $a$ and $b$.
In general, let $F_n$ denote the free group on $n$ generators; we say that $n$ is the \emph{rank}\/ of $F_n$.
If $S$ is a set of elements in a group that generates a free group $F_n$ with $n=|S|$, we say that $S$ \emph{freely generates}\/ $F_n$.
\begin{thm}
  \label{thm:111}
For $n\geq 1$, let $S_n=\{ab, a^2b^2,\ldots,a^nb^n\}\subseteq F_2$.
Then $S_n$ freely generates a subgroup of $F_2$ isomorphic to $F_n$.
\end{thm}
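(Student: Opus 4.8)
The plan is to deduce the statement from the elementary fact that a subset of a free basis of a free group is again a free basis of the subgroup it generates. Let $\pi\colon F_2\to\Z$ be the homomorphism with $\pi(a)=1$ and $\pi(b)=-1$, and put $N=\ker\pi$. Each word $w_j:=a^jb^j$ lies in $N$, so it suffices to analyse the subgroup of $N$ that $w_1,\dots,w_n$ generate. First I would write down an explicit free basis of $N$ by the Reidemeister--Schreier method: the set $T=\{a^m\colon m\in\Z\}$ is a Schreier transversal for $N$ in $F_2$ (it is prefix-closed and $\pi(a^m)=m$), the Schreier generators of the form $(a^m,a)$ are trivial, and those of the form $(a^m,b)$ are $g_m:=a^mba^{1-m}$, $m\in\Z$; hence $N$ is free with basis $\{g_m\colon m\in\Z\}$.

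Next I would rewrite each $w_j$ in this basis. Running the Reidemeister rewriting on $w_j=a^jb^j$, the $j$ initial letters $a$ only move the coset representative from $a^0$ to $a^j$, and then the $j$ letters $b$ successively peel off $g_j,g_{j-1},\dots,g_1$; equivalently one checks directly, by telescoping, that $g_jg_{j-1}\cdots g_1=a^jb^j$. Either way one obtains the identity
\[
w_j=g_jg_{j-1}\cdots g_2g_1\qquad(1\le j\le n)
\]
inside $N$, and for $j\le n$ this word involves only $g_1,\dots,g_n$.

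To finish: since $\{g_1,\dots,g_n\}$ is a subset of a free basis of $N$, the subgroup $L:=\langle g_1,\dots,g_n\rangle$ is free with basis $g_1,\dots,g_n$, hence isomorphic to $F_n$. The assignment $g_j\mapsto w_j=g_jg_{j-1}\cdots g_1$ extends to an endomorphism $\phi$ of $L$, and the assignment $g_1\mapsto g_1$, $g_j\mapsto g_jg_{j-1}^{-1}$ for $j\ge2$ extends to an endomorphism $\psi$ of $L$; a one-line telescoping computation gives $\phi\psi=\psi\phi=\mathrm{id}_L$, so $\phi$ is an automorphism of $L$. (Alternatively, $\phi$ is surjective because $w_jw_{j-1}^{-1}=g_j$, and $F_n$ is Hopfian.) Consequently $\{w_1,\dots,w_n\}=\phi(\{g_1,\dots,g_n\})$ is again a free basis of $L=\langle w_1,\dots,w_n\rangle$, which is exactly the assertion that $S_n$ freely generates a subgroup of $F_2$ isomorphic to $F_n$.

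I expect the only real bookkeeping to be the Reidemeister--Schreier rewriting that produces $w_j=g_j\cdots g_1$. A purely combinatorial alternative---showing directly that no nontrivial reduced word in the $w_j^{\pm1}$ collapses to the identity in $F_2$---is hampered by nested cancellations (for instance $w_k^{-1}w_jw_\ell^{-1}$ with $k,\ell>j$ reduces to a short word, so $S_n$ is not itself Nielsen reduced), and passing to the kernel $N$ is precisely what makes these cancellations disappear.
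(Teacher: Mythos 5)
Your proof is correct and self-contained. The paper does not actually give an argument for Theorem~\ref{thm:111} --- it simply cites Exercise~12 of Section~1.4 of Magnus--Karrass--Solitar --- so there is no in-paper proof to compare against; you have supplied one. Your Reidemeister--Schreier route is clean and all the computations check out: $\pi(a)=1$, $\pi(b)=-1$ makes $N=\ker\pi$ contain every $w_j=a^jb^j$; $\{a^m:m\in\Z\}$ is a prefix-closed transversal; the Schreier generators $(a^m,a)$ vanish while $(a^m,b)$ yield $g_m=a^mba^{1-m}$; and the telescoping identity $g_jg_{j-1}\cdots g_1=a^jb^j$ holds by an easy induction ($g_j\cdots g_{j-k+1}=a^jb^ka^{k-j}$). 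One small shortcut at the end: since $w_1=g_1$ and $w_j=g_jw_{j-1}$ for $j\ge2$, the passage from $(g_1,\dots,g_n)$ to $(w_1,\dots,w_n)$ is visibly a finite sequence of elementary Nielsen transformations, so you may simply invoke the fact that Nielsen transformations take free bases to free bases instead of exhibiting $\psi$ explicitly (though your $\phi\psi=\psi\phi=\mathrm{id}_L$ check is of course equivalent). Your closing remark is also accurate: taking $u=w_k^{-1}$, $v=w_j$, $w=w_\ell^{-1}$ with $j<\min(k,\ell)$ gives $|uvw|=|u|-|v|+|w|$, so $S_n^{\pm1}$ fails the strict-inequality clause of the Nielsen-reduced condition, which is precisely why a direct cancellation argument inside $F_2$ is awkward and passing to $N$ is the right move.
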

% This theorem is equivalent to the following corollary, which was suggested to the second author by Michael Fryers.
% \begin{cor}
%   \label{cor:3}
% Let $G\leq F_2$ be a group of words $w=a^{i_1}b^{j_1}\cdots a^{i_\ell}b^{j_\ell}$ with
% \[
% \sum_{k=1}^\ell i_k = \sum_{k=1}^\ell j_k.
% \]
% Then $G$ is freely generated by the set $S_\infty=\{a^jb^j\colon j\in\Z_{>0}\}$.
% \end{cor}
\begin{proof}
This is Exercise 12 in Section 1.4 of \cite{magnus2004combinatorial}.   
\end{proof}

The free group $F_2$ is relevant to our problem because it is a subgroup of $PSL_2(\Z)$.
Let $\Gamma(2)\leq PSL_2(\Z)$ be the kernel of the homomorphism defined by reduction $\mod 2$:
\[
\Gamma(2)=\left\{
  \begin{pmatrix}
    a & b\\
    c & d\\
  \end{pmatrix}\in PSL_2(\Z)\colon a, d\equiv 1\mod 2, b, c\equiv 0\mod 2
\right\}.
\]
It is known \cite{magnus2004combinatorial} that $\Gamma(2)$ contains an index two free subgroup $\Lambda$ on two generators $u$ and $v$ given by
\begin{equation}
  \label{eq:1}
  u=
\begin{pmatrix}
  1 & 2 \\
  0 & 1 \\
\end{pmatrix}
\andd
v=
\begin{pmatrix}
  1 & 0 \\
  2 & 1 \\
\end{pmatrix}.
\end{equation}

Let
\[
T=\{v^{-j}u^j\colon 1\leq j\leq N\}.
\]
It follows from Theorem~\ref{thm:111} that $T$ generates a free subgroup of $\Lambda$ of rank $N$.
\begin{cor}
  \label{cor:5}
If
\[
T=\left\{
  \begin{pmatrix}
    1 & -2j \\
    2j & 1-4j^2\\
  \end{pmatrix}
\colon 1\leq j\leq N
\right\},
\]
then $T$ generates a free subgroup of $PSL_2(\Z)$ of rank $|T|=N$.
\end{cor}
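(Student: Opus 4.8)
The plan is to recognize the matrices listed in $T$ as a family of words of the form $a^{j}b^{j}$ in a free basis of the group $\Lambda$ introduced above, and then quote Theorem~\ref{thm:111}. Recall that $\Lambda=\langle u,v\rangle$ is an index two free subgroup of $\Gamma(2)\leq PSL_2(\Z)$ with
\[
u=\begin{pmatrix} 1 & 2 \\ 0 & 1 \end{pmatrix},\qquad v=\begin{pmatrix} 1 & 0 \\ 2 & 1 \end{pmatrix}.
\]

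First I would record the elementary power identities $u^{j}=\begin{pmatrix} 1 & 2j \\ 0 & 1 \end{pmatrix}$ and $v^{j}=\begin{pmatrix} 1 & 0 \\ 2j & 1 \end{pmatrix}$, valid for every integer $j$, from which a direct multiplication gives
\[
v^{j}u^{-j}=\begin{pmatrix} 1 & 0 \\ 2j & 1 \end{pmatrix}\begin{pmatrix} 1 & -2j \\ 0 & 1 \end{pmatrix}=\begin{pmatrix} 1 & -2j \\ 2j & 1-4j^{2} \end{pmatrix}.
\]
Comparing with the statement, this shows that, as a subset of $PSL_2(\Z)$, $T=\{v^{j}u^{-j}\colon 1\leq j\leq N\}$. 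Moreover the displayed matrices are pairwise distinct, since the $(1,2)$-entry $-2j$ determines $j$, so $|T|=N$.

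Finally I would invoke Theorem~\ref{thm:111}. Since $\{u,v\}$ is a free basis of $\Lambda$, so is $\{v,u^{-1}\}$, because replacing a member of a free basis by its inverse again yields a free basis. Applying Theorem~\ref{thm:111} to the free group $\Lambda$ with $a=v$ and $b=u^{-1}$ identifies the set $S_{N}=\{a^{j}b^{j}\colon 1\leq j\leq N\}$ with $T$ and shows that $T$ freely generates a subgroup of $\Lambda$ isomorphic to $F_{N}$. As $\Lambda\leq PSL_2(\Z)$, it follows at once that $T$ generates a free subgroup of $PSL_2(\Z)$ of rank $|T|=N$, which is the claim. I expect essentially no obstacle here beyond this bookkeeping; the only points requiring a moment's care are the sign conventions when matching the displayed matrices to the words $v^{j}u^{-j}$, and the (trivial) remark that passing from the basis $\{u,v\}$ to $\{v,u^{-1}\}$ of $\Lambda$ preserves freeness.
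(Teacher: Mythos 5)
Your proof is correct and follows essentially the same route as the paper: identify the matrices as $v^j u^{-j}$, observe that $\{v,u^{-1}\}$ is also a free basis of $\Lambda$, and invoke Theorem~\ref{thm:111}. As a small bonus, your explicit computation of $v^j u^{-j}$ is cleaner than the factorization displayed in the paper, which contains a typographical error in the first factor (it should be $v^j=\begin{pmatrix}1&0\\2j&1\end{pmatrix}$, not $\begin{pmatrix}1&0\\2j&0\end{pmatrix}$).
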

\begin{proof}
  Since
\[
  \begin{pmatrix}
    1 & -2j \\
    2j & 1-4j^2\\
  \end{pmatrix}
= 
\begin{pmatrix}
  1 & 0 \\
  2j & 0\\
\end{pmatrix}
\begin{pmatrix}
  1 & -2j \\
  0 & 1 \\
\end{pmatrix}
=v^ju^{-j}
\]
we have $T=\{v^ju^{-j}\colon 1\leq j\leq N\}$, where $u$ and $v$ are the matrices in \eqref{eq:1} that generate a subgroup of $PSL_2(\Z)$ isomorphic to $F_2$.
Since $v$ and $u^{-1}$ also generate the same subgroup, it follows by Theorem~\ref{thm:111} that $T$ generates a free subgroup of rank $|T|$.
(This is because $T$ is the set $S_N$ from Theorem~\ref{thm:111} with $u$ replaced by $u^{-1}$.)
\end{proof}

\begin{proof}[Proof of Theorem~\ref{thm:3}]
Let $\tilde T\subseteq PSL_2(\Z)$ be such that $\phi_p(\tilde T)=T$; we may take $\tilde T$ to be the same set of matrices in $T$, but with coefficients in $\Z$ instead of $\Z/p\Z$.
By Corollary~\ref{cor:5}, $\tilde T$ generates a free subgroup $\tilde \Lambda$ of $PSL_2(\Z)$ of rank $|T|$, so by Theorem~\ref{thm:13}
\[
d(\cay(\phi_p(\tilde\Lambda)),\phi_p(\tilde T)) \geq \log_{n(\tilde T)} \left( \frac p2 \right).
\]
By \eqref{eq:30}, we have
\[
\Norm{
  \begin{pmatrix}
    1 & -2j \\
    2j & 1-4j^2 \\
  \end{pmatrix}
}
\leq \sqrt{2+16j^4}\leq 5j^2,
\]
so $n(\tilde T)\leq 5|T|^2=5N^2\leq N^3$ if $N\geq 5$.
Thus
\[
d(\cay(\phi_p(\tilde \Lambda)),\phi_p(\tilde T)) \geq \frac 13\log_{N} \left( \frac p2 \right),
\]
which proves the claimed bound if $p$ is sufficiently large.\fxnote{need to ensure that $\phi_p(\tilde \Lambda)=PSL_2(\F_p)$?}
\end{proof}

\appendix
\section{Analytic lemmas}
\label{sec:analytic-lemmas}

In this Appendix, we prove some technical lemmas quoted above.

\subsection{Proof of Lemma~\ref{lem:7}}

Recall Lemma~\ref{lem:7}.
\begin{replem}{lem:7}
  Let $\nu$ be a finitely supported function on a multiplicative group with $\norm{\nu}_1\ll 1$.
Suppose that $\norm{\nu\ast \nu}_2^2 \geq M^{-1}\norm{\nu}_2^2$ for some $M>1$.
Then there exists a set $A\subseteq\supp(\nu)$ such that
\begin{equation*}
\tag{\ref{eq:60}}
\frac 1{M\norm{\nu}_2^2}\ll |A|\ll \frac{M^2}{\norm{\nu}_2^{2}},
\end{equation*}
\begin{equation*}
  |\nu(g)|\gg \frac {\norm{\nu}_2^2}{M^2}  \tag{\ref{eq:61}}
\end{equation*}
for all $g\in A$, and
\begin{equation*}
  E(A)\gg M^{-3}\norm{\nu}_2^{-6}\gg M^{-9}|A|^3.   \tag{\ref{eq:62}}
\end{equation*}
\end{replem}
The proof of Lemma~\ref{lem:7} follows the proof of Lemma 1.4.1 in \cite{tao2015expansion}.
\begin{proof}[Proof of Lemma~\ref{lem:7}]
  Suppose $G$ is a group, $\nu\colon G\to\CC$ has finite support, $\norm{\nu}_1\ll 1$, and
\[
\norm{\nu}_2^2\geq \frac 1M\norm{\nu}_2^2.
\]
We wish to find a subset $A\subseteq\supp(\nu)$ with $|A|\ll 1/\norm{\nu}_2^2$ and $|\nu(x)|\gg \norm{\nu}_2^2$ for all $x\in A$ such that $A$ has large additive energy.

Without loss of generality, we may replace $\nu$ by its absolute value, so we will assume that $\nu$ is non-negative.

Write $\nu=\nu_1+\nu_2+\nu_3$ where
\[
\nu_1 := \nu\cdot 1_{\{x\colon \nu(x)< \lambda\norm{\nu}_2^2\}},
\]
\[
\nu_3 := \nu\cdot 1_{\{x\colon \nu(x)> \Lambda\norm{\nu}_2^2\}},
\]
and
\[
\nu_2:=\nu-\nu_1-\nu_3.
\]
We want a lower bound for $\norm{\nu_2\ast\nu_2}_2^2$.

We have
\[
\norm{\nu_1}_2^2\leq \lambda\norm{\nu}_2^2\norm{\nu_1}_1 \ll \lambda\norm{\nu}_2^2
\]
and
\[
\norm{\nu_3}_1 \leq \frac 1{\Lambda\norm{\nu}_2^2}\norm{\nu_3}_2^2 \leq \frac 1\Lambda.
\]
By Young's inequality,
\[
\norm{\nu_1\ast \nu}_2 \leq \norm{\nu_1}_2\norm{\nu}_1\ll \lambda^{1/2}\norm{\nu}_2
\]
and
\[
\norm{\nu_3\ast \nu}_2 \leq \norm{\nu_3}_1\norm{\nu}_2\leq \frac 1{\Lambda}\norm{\nu}_2.
\]

It follows that
\begin{align*}
  \norm{\nu_2\ast\nu_2-\nu\ast\nu}_2&\leq \norm{\nu_2\ast (\nu_1+\nu_3)}_2+\norm{\nu\ast (\nu_1+\nu_3)}_2 \\
&\leq 2\norm{\nu\ast\nu_1}_2 + 2\norm{\nu\ast \nu_3}_2 \ll (\lambda^{1/2}+\Lambda^{-1})\norm{\nu}_2.
\end{align*}
Choosing $\lambda\approx 1/M$ and $\Lambda\approx M^{1/2}$, we have
\[
\norm{\nu_2\ast\nu_2}_2 \geq \norm{\nu\ast\nu}_2 - \norm{\nu_2\ast\nu_2-\nu\ast\nu}_2 \gg \frac 1{M^{1/2}}\norm{\nu}_2.
\]

Let $A:=\{x\colon \nu(x)\geq \lambda\norm{\nu}_2^2\}$.
Then
\[
\norm{A\ast A}_2 \geq \frac 1{\Lambda^2\norm{\nu}_2^4}\norm{\nu_2\ast \nu_2}_2\gg \frac 1{M^{3/2}\norm{\nu}_2^3},
\]
hence
\[
E(A) \gg \frac 1{M^{3}\norm{\nu}_2^6}.
\]

On the other hand, by Markov's inequality and $\norm{\nu}_1\ll 1$,
\[
|A| \ll \frac 1{\lambda^2\norm{\nu}_2^2} \ll \frac{M^2}{\norm{\nu}_2^2},
\]
so
\[
E(A)\gg \frac{|A|^3}{M^{9}}.
\]

The lower bound on $|A|$ in Equation~\eqref{eq:60} follows from
\[
|A|^3\geq E(A) \gg \frac 1{M^{3}\norm{\nu}_2^6}.
\]
\end{proof}

\subsection{Proof of Proposition~\ref{prop:1}}

\begin{repprop}{prop:1}
Suppose $G$ is a finite group that acts doubly transitively on a set $X$.
Suppose $\mu\colon G\to\CC$ and $f,h\colon X\to\CC$ satisfy $\sum_x f(x)=\sum_x h(x)=0$.
Then
\[
|\ip{\mu\ast f,h}|\leq \sqrt{\frac{|G|}{|X|-1}}\norm{\mu}_2\norm{f}_2\norm{h}_2.
\]  
\end{repprop}
The proof of Proposition~\ref{prop:1} is a completion argument, similar to the arguments in \cite{murphy2016point-line}.
\begin{proof}[Proof of Proposition~\ref{prop:1}]
The proof is a completion argument using Cauchy-Schwarz:
  \begin{align*}
      |\ip{\mu\ast f,h}|&\leq\sum_{g\in G}|\mu(g)| \left|\sum_{x\in X}f(g^{-1}x)\overline{h(x)} \right|\\
&\leq \norm{\mu}_2 \left(\sum_{g\in G}\left|\sum_{x\in X}f(g^{-1}x)\overline{h(x)} \right|^2 \right)^{1/2}.
  \end{align*}
Since $G$ acts transitively on $X$ and non-diagonal pairs in $X\times X$, we have
\begin{align*}
\sum_{g\in G}\left|\sum_{x\in X}f(g^{-1}x)\overline{h(x)} \right|^2&=
  \sum_{g\in G}\sum_{x,y\in X}f(g^{-1}x)\overline{f(g^{-1}y)}\overline{h(x)}h(y)\\
&=\sum_{g\in G}\sum_{x\in X}|f(g^{-1}x)|^2|h(x)|^2 + \sum_{g\in G}\sum_{x\not= y\in X}f(g^{-1}x)\overline{f(g^{-1}y)}\overline{h(x)}h(y) \\
&=\frac{|G|}{|X|}\sum_{x',x\in X}|f(x')|^2|h(x)|^2 + \frac{|G|}{|X|(|X|-1)}\sum_{x\not=y,x'\not=y'}f(x')\overline{f(y')}\overline{h(x)}h(y)\\
&=\frac{|G|}{|X|}\norm{f}_2^2\norm{h}_2^2 + \frac{|G|}{|X|(|X|-1)} \left( \left|\sum_x f(x)\right|^2-\norm{f}_2^2 \right) \left( \left|\sum_x h(x)\right|^2-\norm{h}_2^2 \right)\\
&=\frac{|G|}{|X|}\norm{f}_2^2\norm{h}_2^2 + \frac{|G|}{|X|(|X|-1)}\norm{f}_2^2\norm{h}_2^2\\
&=\frac{|G|}{|X|-1}\norm{f}_2^2\norm{h}_2^2.
\end{align*}
\end{proof}

\subsection{Proof of Theorem~\ref{thm:4}}

\begin{repthm}{thm:4}
Let $W$ be a subset of $\prob^1(\F_p)$ and let $\mu$ be a probability measure on $PSL_2(\F_p)$.
Then either $\gen{\mu\ast W, W}<4$ or
\begin{equation}
  %\label{eq:34}
\gen{\mu\ast W, W}\leq 2\norm{\mu}_\infty^{1/3}|W|^2.% \left( |W|-\frac{|W|}{p+1} \right)^2.
\end{equation}

If $f_W(x):=W(x)-|W|/(p+1)$ is the balanced function of $W$, then either $\gen{\mu\ast f_W,f_W}\leq 8$ or
\begin{equation}
%  \label{eq:35}
  \gen{\mu\ast f_W,f_W}\leq 4 \norm{\mu}_\infty^{1/3} \left( |W|-\frac{|W|}{p+1} \right)^2.
\end{equation}  
\end{repthm}
\begin{proof}[Proof of Theorem~\ref{thm:4}]
  Since $PGL_2(\F_p)$ acts simply 3-transitively on $\prob^1(\F_p)$ and $PSL_2(\F_p)\leq PGL_2(\F_p)$, for any pair of distinct triples of points $(x_1,y_1,z_1), (x_2,y_2,z_2)$ in $\prob^1(\F_p)^3$ there is at most one element $g\in PSL_2(\F_p)$ such that
  \begin{equation}
    \label{eq:36}
    g (x_1,y_1,z_1)=(x_2,y_2,z_2).
  \end{equation}

Since the function $x\mapsto{x\choose 3}$ is convex and
\[
\gen{\mu\ast W,W}=\sum_g \mu(g)\gen{\delta_g\ast W,W},
\]
we have
\begin{align*}
  3!  {\gen{\mu\ast W,W}\choose 3}
&=3!{\sum_g \mu(g)\gen{\delta_g\ast W,W}\choose 3}\\
&\leq 3!\sum_g \mu(g) {\gen{\delta_g\ast W,W}\choose 3}\\
&\leq 3!\norm{\mu}_\infty\sum_g {\gen{\delta_g\ast W,W}\choose 3}.
\end{align*}
By \eqref{eq:36}, the right-hand side of the last line is at most $\norm{\mu}_\infty$ times the number of pairs of distinct triples in $W^3$, thus
\begin{equation}
  \label{eq:38}
  3!  {\gen{\mu\ast W,W}\choose 3}\leq (3!)^2\norm{\mu}_\infty {|W|\choose 3}^2.
\end{equation}
If $\gen{\mu\ast W,W}\geq 4$, then the left-hand side of \eqref{eq:38} is at least $\gen{\mu\ast W,W}^3/4$, so
\[
\gen{\mu\ast W,W}^3\leq 4\norm{\mu}_{\infty}|W|^6,
\]
which proves \eqref{eq:34}.

To prove \eqref{eq:35}, we decompose $f_W$ into its positive and negative parts:
\begin{equation}
  \label{eq:39}
  f_W(x) = (1-\alpha)W(x) - \alpha W^c(x)
\end{equation}
where $\alpha=|W|/(p+1)$ and $W^c=\prob^1(\F_p)\setminus W$ is the complement of $W$.
It follows that
\begin{equation}
  \label{eq:40}
  \gen{\mu\ast f_W,f_W}\leq (1-\alpha)^2 \gen{\mu\ast W,W}+ \alpha^2\gen{\mu\ast W^c,W^c}.
\end{equation}
By the first part of the theorem, we have either
$\gen{\mu\ast W,W}<4$ or
\[
(1-\alpha)^2 \gen{\mu\ast W,W}\leq (1-\alpha)^2 2\norm{\mu}_\infty^{1/3}|W|^2
%=2\norm{\mu}_\infty^{1/3} \left( |W|-\frac{|W|}{p+1} \right)^2
=2\norm{\mu}_\infty^{1/3}\norm{f_W}_2^2,
\]
and either
$\gen{\mu\ast W^c,W^c}<4$ or
\[
\alpha^2 \gen{\mu\ast W^c,W^c}\leq \alpha^2 2\norm{\mu}_\infty^{1/3}(p+1-|W|)^2
%=2\norm{\mu}_\infty^{1/3} \left( |W|-\frac{|W|}{p+1} \right)^2 
=2\norm{\mu}_\infty^{1/3}\norm{f_W}_2^2.
\]

Thus by the above equations and \eqref{eq:40} we have
\[
\gen{\mu\ast f_W,f_W}\leq\max \left(4\norm{\mu}_\infty^{1/3}\norm{f_W}_2^2, 2\norm{\mu}_\infty^{1/3}\norm{f_W}_2^2+4, 8  \right).
\]
The maximum is only achieved by the middle term when all terms are equal to 8, so we have
\[
\gen{\mu\ast f_W,f_W}\leq\max \left(4\norm{\mu}_\infty^{1/3}\norm{f_W}_2^2, 8  \right),
\]
as claimed.
\end{proof}

%%%%%%%%%%%%%%%%%%%%%%%%%%%%%%%%

\bibliographystyle{abbrvurl}
\bibliography{/Users/brendan/Dropbox/library}

\begin{thebibliography}{10}

\bibitem{yazici2015growth}
E.~Aksoy-Yazici, B.~Murphy, M.~Rudnev, and I.~Shkredov.
\newblock {G}rowth {E}stimates in {P}ositive {C}haracteristic via {C}ollisions.
\newblock {\em Int. Math. Res. Not. IMRN}, 2017(23):7148--7189, December 2017.
\newblock \url{http://arxiv.org/abs/1512.06613}, \href
  {http://arxiv.org/abs/arXiv:1512.06613} {\path{arXiv:1512.06613}}.

\bibitem{babai2008product}
L.~Babai, N.~Nikolov, and L.~Pyber.
\newblock Product growth and mixing in finite groups.
\newblock In {\em Proceedings of the {N}ineteenth {A}nnual {ACM}-{SIAM}
  {S}ymposium on {D}iscrete {A}lgorithms}, pages 248--257. ACM, New York, 2008.

\bibitem{borel1969linear}
A.~Borel.
\newblock {\em Linear algebraic groups}.
\newblock Notes taken by Hyman Bass. W. A. Benjamin, Inc., New York-Amsterdam,
  1969.

\bibitem{bourgain2005sum-product}
J.~Bourgain.
\newblock More on the sum-product phenomenon in prime fields and its
  applications.
\newblock {\em Int. J. Number Theory}, 1(1):1--32, 2005.
\newblock \url{http://dx.doi.org/10.1142/S1793042105000108}.

\bibitem{bourgain2012modular}
J.~Bourgain.
\newblock {A} modular {S}zemeredi-{T}rotter theorem for hyperbolas, 2012.
\newblock \url{http://arxiv.org/abs/1208.4008}, \href
  {http://arxiv.org/abs/arXiv:1208.4008} {\path{arXiv:1208.4008}}.

\bibitem{bourgain2008uniform-a}
J.~Bourgain and A.~Gamburd.
\newblock Uniform expansion bounds for {C}ayley graphs of {${\rm SL}_2(\Bbb
  F_p)$}.
\newblock {\em Ann. of Math. (2)}, 167(2):625--642, 2008.
\newblock \url{https://doi.org/10.4007/annals.2008.167.625}.

\bibitem{bourgain2008uniform}
J.~Bourgain and A.~Gamburd.
\newblock Uniform expansion bounds for {C}ayley graphs of {${\rm SL}\sb 2(\Bbb
  F\sb p)$}.
\newblock {\em Ann. of Math.}, 167(2):625--642, 2008.
\newblock \url{http://dx.doi.org/10.4007/annals.2008.167.625}.

\bibitem{bourgain2016markoff-a}
J.~Bourgain, A.~Gamburd, and P.~Sarnak.
\newblock {M}arkoff {S}urfaces and {S}trong {A}pproximation: 1, 2016.
\newblock \url{http://arxiv.org/abs/1607.01530}, \href
  {http://arxiv.org/abs/arXiv:1607.01530} {\path{arXiv:1607.01530}}.

\bibitem{bourgain2016markoff}
J.~Bourgain, A.~Gamburd, and P.~Sarnak.
\newblock Markoff triples and strong approximation.
\newblock {\em C. R. Math. Acad. Sci. Paris}, 354(2):131--135, 2016.
\newblock \url{http://dx.doi.org/10.1016/j.crma.2015.12.006}.

\bibitem{bourgain2011zarembas}
J.~Bourgain and A.~Kontorovich.
\newblock On {Z}aremba's conjecture.
\newblock {\em C. R. Math. Acad. Sci. Paris}, 349(9-10):493--495, 2011.
\newblock \url{https://doi.org/10.1016/j.crma.2011.03.023}.

\bibitem{bourgain2014zarembas}
J.~Bourgain and A.~Kontorovich.
\newblock On {Z}aremba's conjecture.
\newblock {\em Ann. of Math. (2)}, 180(1):137--196, 2014.
\newblock \url{https://doi.org/10.4007/annals.2014.180.1.3}.

\bibitem{david2017equidistribution}
O.~David and U.~Shapira.
\newblock {E}quidistribution of divergent orbits and continued fraction
  expansion of rationals.
\newblock {\em J. London Math. Soc.}, 2018.
\newblock \url{http://arxiv.org/abs/1707.00427}, \href
  {http://arxiv.org/abs/arXiv:1707.00427} {\path{arXiv:1707.00427}}.

\bibitem{dickson1901theory}
L.~E. Dickson.
\newblock Theory of linear groups in an arbitrary field.
\newblock {\em Trans. Amer. Math. Soc.}, 2(4):363--394, 1901.
\newblock \url{https://doi.org/10.2307/1986251}.

\bibitem{dickson1958linear}
L.~E. Dickson.
\newblock {\em Linear groups: {W}ith an exposition of the {G}alois field
  theory}.
\newblock with an introduction by W. Magnus. Dover Publications, Inc., New
  York, 1958.

\bibitem{elekes1997linear}
G.~Elekes.
\newblock On linear combinatorics. {I}. {C}oncurrency---an algebraic approach.
\newblock {\em Combinatorica}, 17(4):447--458, 1997.
\newblock \url{http://dx.doi.org/10.1007/BF01194999}.

\bibitem{elekes1997number}
G.~Elekes.
\newblock On the number of sums and products.
\newblock {\em Acta Arith.}, 81(4):365--367, 1997.

\bibitem{erdos1983products}
P.~Erd{\H{o}}s and E.~Szemer{{\'e}}di.
\newblock On sums and products of integers.
\newblock In {\em Studies in pure mathematics}, pages 213--218. Birkh{\"a}user,
  Basel, 1983.

\bibitem{frolenkov2014strengthening}
D.~A. Frolenkov and I.~D. Kan.
\newblock A strengthening of a theorem of {B}ourgain-{K}ontorovich {II}.
\newblock {\em Mosc. J. Comb. Number Theory}, 4(1):78--117, 2014.

\bibitem{fulton1991representation}
W.~Fulton and J.~Harris.
\newblock {\em Representation theory}, volume 129 of {\em Graduate Texts in
  Mathematics}.
\newblock Springer-Verlag, New York, 1991.
\newblock A first course, Readings in Mathematics.
\newblock \url{http://dx.doi.org/10.1007/978-1-4612-0979-9}.

\bibitem{gamburd2002spectral}
A.~Gamburd.
\newblock On the spectral gap for infinite index ``congruence'' subgroups of
  {${\rm SL}_2(\bold Z)$}.
\newblock {\em Israel J. Math.}, 127:157--200, 2002.
\newblock \url{http://dx.doi.org/10.1007/BF02784530}.

\bibitem{gamburd2007girth}
A.~Gamburd, S.~Hoory, M.~Shahshahani, A.~Shalev, and B.~Virag.
\newblock {O}n the girth of random {C}ayley graphs.
\newblock Random Structures Algorithms 35 (2009), no. 1, 100-117, 2007.
\newblock \url{http://arxiv.org/abs/0707.1833}, \href
  {http://arxiv.org/abs/arXiv:0707.1833} {\path{arXiv:0707.1833}}.

\bibitem{gill2016quasirandom}
N.~Gill.
\newblock Quasirandom group actions.
\newblock {\em Forum Math. Sigma}, 4:e24, 35, 2016.
\newblock \url{http://dx.doi.org/10.1017/fms.2016.8}.

\bibitem{gowers2008quasirandom}
W.~T. Gowers.
\newblock Quasirandom groups.
\newblock {\em Combin. Probab. Comput.}, 17(3):363--387, 2008.
\newblock \url{http://dx.doi.org/10.1017/S0963548307008826}.

\bibitem{hardy1954introduction}
G.~H. Hardy and E.~M. Wright.
\newblock {\em An introduction to the theory of numbers}.
\newblock Oxford, at the Clarendon Press, 1954.
\newblock 3rd ed.

\bibitem{helfgott2008growth}
H.~A. Helfgott.
\newblock Growth and generation in {${\rm SL}_2(\Bbb Z/p\Bbb Z)$}.
\newblock {\em Ann. of Math. (2)}, 167(2):601--623, 2008.
\newblock \url{http://dx.doi.org/10.4007/annals.2008.167.601}.

\bibitem{helfgott2015growth}
H.~A. Helfgott.
\newblock Growth in groups: ideas and perspectives.
\newblock {\em Bull. Amer. Math. Soc. (N.S.)}, 52(3):357--413, 2015.
\newblock \url{http://dx.doi.org/10.1090/S0273-0979-2015-01475-8}.

\bibitem{hensley1989distribution}
D.~Hensley.
\newblock The distribution of badly approximable numbers and continuants with
  bounded digits.
\newblock In {\em Th\'eorie des nombres ({Q}uebec, {PQ}, 1987)}, pages
  371--385. de Gruyter, Berlin, 1989.

\bibitem{hensley1990distribution}
D.~Hensley.
\newblock The distribution of badly approximable rationals and continuants with
  bounded digits. {II}.
\newblock {\em J. Number Theory}, 34(3):293--334, 1990.
\newblock \url{https://doi.org/10.1016/0022-314X(90)90139-I}.

\bibitem{hensley1992continued}
D.~Hensley.
\newblock Continued fraction {C}antor sets, {H}ausdorff dimension, and
  functional analysis.
\newblock {\em J. Number Theory}, 40(3):336--358, 1992.
\newblock \url{https://doi.org/10.1016/0022-314X(92)90006-B}.

\bibitem{huang2015improvement}
S.~Huang.
\newblock An improvement to {Z}aremba's conjecture.
\newblock {\em Geom. Funct. Anal.}, 25(3):860--914, 2015.
\newblock \url{https://doi.org/10.1007/s00039-015-0327-6}.

\bibitem{jenkinson2004density}
O.~Jenkinson.
\newblock On the density of {H}ausdorff dimensions of bounded type continued
  fraction sets: the {T}exan conjecture.
\newblock {\em Stoch. Dyn.}, 4(1):63--76, 2004.
\newblock \url{https://doi.org/10.1142/S0219493704000900}.

\bibitem{kan2015strengthening}
I.~D. Kan.
\newblock A strengthening of a theorem of {B}ourgain and {K}ontorovich. {III}.
\newblock {\em Izv. Ross. Akad. Nauk Ser. Mat.}, 79(2):77--100, 2015.
\newblock \url{https://doi.org/10.4213/im8253}.

\bibitem{kan2016strengthening}
I.~D. Kan.
\newblock A strengthening of a theorem of {B}ourgain and {K}ontorovich. {IV}.
\newblock {\em Izv. Ross. Akad. Nauk Ser. Mat.}, 80(6):103--126, 2016.
\newblock \url{https://doi.org/10.4213/im8360}.

\bibitem{kan2017strengthening}
I.~D. Kan.
\newblock A strengthening of a theorem of {B}ourgain and {K}ontorovich. {V}.
\newblock {\em Tr. Mat. Inst. Steklova}, 296(Analiticheskaya i Kombinatornaya
  Teoriya Chisel):133--139, 2017.
\newblock \url{https://doi.org/10.1134/S0371968517010101}.

\bibitem{kan2014strengthening}
I.~D. Kan and D.~A. Frolenkov.
\newblock A strengthening of the {B}ourgain-{K}ontorovich theorem.
\newblock {\em Izv. Ross. Akad. Nauk Ser. Mat.}, 78(2):87--144, 2014.

\bibitem{kan2011quantitative}
I.~D. Kan and N.~A. Krotkova.
\newblock Quantitative generalizations of {N}iederreiter's results on continued
  fractions.
\newblock {\em Chebyshevski\u\i \ Sb.}, 12(1(37)):100--119, 2011.
\newblock \url{http://arxiv.org/abs/1109.1633}, \href
  {http://arxiv.org/abs/arXiv:1109.1633} {\path{arXiv:1109.1633}}.

\bibitem{kesten1959symmetric}
H.~Kesten.
\newblock Symmetric random walks on groups.
\newblock {\em Trans. Amer. Math. Soc.}, 92:336--354, 1959.
\newblock \url{http://dx.doi.org/10.2307/1993160}.

\bibitem{komatsu2005zarembas}
T.~Komatsu.
\newblock On a {Z}aremba's conjecture for powers.
\newblock {\em Sarajevo J. Math.}, 1(13)(1):9--13, 2005.

\bibitem{korobov1963number}
N.~Korobov.
\newblock Number-theoretic methods in numerical analysis.
\newblock {\em Fizmatgis, Moscow}, 37, 1963.

\bibitem{kowalski2013explicit}
E.~Kowalski.
\newblock Explicit growth and expansion for {${\rm SL}_2$}.
\newblock {\em Int. Math. Res. Not. IMRN}, (24):5645--5708, 2013.

\bibitem{magee2016uniform}
M.~Magee, H.~Oh, and D.~Winter.
\newblock {U}niform congruence counting for {S}chottky semigroups in
  $\mathrm{{S}{L}}_2(\mathbf{{Z}})$.
\newblock {\em Journal f\"{u}r die reine und angewandte Mathematik (Crelles
  Journal)}, 2018.
\newblock \url{http://arxiv.org/abs/1601.03705}, \href
  {http://arxiv.org/abs/arXiv:1601.03705} {\path{arXiv:1601.03705}}.

\bibitem{magnus2004combinatorial}
W.~Magnus, A.~Karrass, and D.~Solitar.
\newblock {\em Combinatorial group theory}.
\newblock Dover Publications, Inc., Mineola, NY, second edition, 2004.
\newblock Presentations of groups in terms of generators and relations.

\bibitem{margulis1982explicit}
G.~A. Margulis.
\newblock Explicit constructions of graphs without short cycles and low density
  codes.
\newblock {\em Combinatorica}, 2(1):71--78, 1982.
\newblock \url{http://dx.doi.org/10.1007/BF02579283}.

\bibitem{mojarrad2017conditional}
H.~N. Mojarrad and T.~Pham.
\newblock {C}onditional expanding bounds for two-variable functions over
  arbitrary fields, 2017.
\newblock \url{http://arxiv.org/abs/1703.03309}, \href
  {http://arxiv.org/abs/arXiv:1703.03309} {\path{arXiv:1703.03309}}.

\bibitem{moshchevitin2007setsoftheform}
N.~G. Moshchevitin.
\newblock Sets of the form {$\mathscr{A}+\mathscr{B}$} and finite continued
  fractions.
\newblock {\em Sbornik: Mathematics}, 198(4):537, 2007.
\newblock \url{http://stacks.iop.org/1064-5616/198/i=4/a=A05}.

\bibitem{moshchevitin2012problems}
N.~G. Moshchevitin.
\newblock {O}n some open problems in {D}iophantine approximation, 2012.
\newblock \url{http://arxiv.org/abs/1202.4539}, \href
  {http://arxiv.org/abs/arXiv:1202.4539} {\path{arXiv:1202.4539}}.

\bibitem{moshchevitin2017problems}
N.~G. Moshchevitin.
\newblock {O}n some open problems in {D}iophantine approximation.
\newblock presentation at presentation at Vilnius Conference in Combinatorics
  and Number Theory, July 2017.
\newblock
  \url{http://mjcnt.phystech.edu/conference/vilnius/presentations/moshchevitin.pdf}.

\bibitem{murphy2017group}
B.~Murphy.
\newblock Group action combinatorics.
\newblock In preparation, 2017.

\bibitem{murphy2017upper}
B.~Murphy.
\newblock Upper and lower bounds for rich lines in grids, 2017.
\newblock Submitted.
\newblock \url{https://arxiv.org/abs/1709.10438}, \href
  {http://arxiv.org/abs/arxiv:1709.10438} {\path{arxiv:1709.10438}}.

\bibitem{murphy2016point-line}
B.~Murphy and G.~Petridis.
\newblock A point-line incidence identity in finite fields, and applications.
\newblock {\em Mosc. J. Comb. Number Theory}, 6(1):64--95, 2016.

\bibitem{niederreiter1986dyadic}
H.~Niederreiter.
\newblock Dyadic fractions with small partial quotients.
\newblock {\em Monatsh. Math.}, 101(4):309--315, 1986.
\newblock \url{https://doi.org/10.1007/BF01559394}.

\bibitem{rukavishnikova2006probabilistic}
M.~G. Rukavishnikova.
\newblock Probabilistic bound for the sum of partial quotients of fractions
  with a fixed denominator.
\newblock {\em Chebyshevski\u\i \ Sb.}, 7(4):113--121, 2006.

\bibitem{rukavishnikova2011large}
M.~G. Rukavishnikova.
\newblock The law of large numbers for the sum of the partial quotients of a
  rational number with fixed denominator.
\newblock {\em Mathematical Notes}, 90(3):418, Oct 2011.
\newblock \url{https://doi.org/10.1134/S0001434611090100}.

\bibitem{sarnak2014notes}
P.~Sarnak.
\newblock Notes on thin matrix groups.
\newblock In {\em Thin groups and superstrong approximation}, volume~61 of {\em
  Math. Sci. Res. Inst. Publ.}, pages 343--362. Cambridge Univ. Press,
  Cambridge, 2014.

\bibitem{serre1977linear}
J.-P. Serre.
\newblock {\em Linear representations of finite groups}.
\newblock Springer-Verlag, New York-Heidelberg, 1977.
\newblock Translated from the second French edition by Leonard L. Scott,
  Graduate Texts in Mathematics, Vol. 42.

\bibitem{shkredov2017remarks}
I.~D. Shkredov.
\newblock {S}ome remarks on the asymmetric sum--product phenomenon, 2017.
\newblock \url{http://arxiv.org/abs/1705.09703}, \href
  {http://arxiv.org/abs/arXiv:1705.09703} {\path{arXiv:1705.09703}}.

\bibitem{shkredov2018asymptotic}
I.~D. Shkredov.
\newblock {O}n asymptotic formulae in some sum-product questions.
\newblock {\em Trans. Moscow Math. Soc., accepted}, 2018.
\newblock \url{http://arxiv.org/abs/1802.09066}, \href
  {http://arxiv.org/abs/arXiv:1802.09066} {\path{arXiv:1802.09066}}.

\bibitem{suzuki1982group}
M.~Suzuki.
\newblock {\em Group theory. {I}}, volume 247 of {\em Grundlehren der
  Mathematischen Wissenschaften [Fundamental Principles of Mathematical
  Sciences]}.
\newblock Springer-Verlag, Berlin-New York, 1982.
\newblock Translated from the Japanese by the author.

\bibitem{tao2008product}
T.~Tao.
\newblock Product set estimates for non-commutative groups.
\newblock {\em Combinatorica}, 28(5):547--594, 2008.
\newblock \url{http://dx.doi.org/10.1007/s00493-008-2271-7}.

\bibitem{tao2015expansion}
T.~Tao.
\newblock {\em Expansion in finite simple groups of {L}ie type}, volume 164 of
  {\em Graduate Studies in Mathematics}.
\newblock American Mathematical Society, Providence, RI, 2015.

\bibitem{vinogradov1954elements}
I.~M. Vinogradov.
\newblock {\em Elements of number theory}.
\newblock Dover Publications, Inc., New York, 1954.
\newblock Translated by S. Kravetz.

\bibitem{woess2000random}
W.~Woess.
\newblock {\em Random walks on infinite graphs and groups}, volume 138 of {\em
  Cambridge Tracts in Mathematics}.
\newblock Cambridge University Press, Cambridge, 2000.
\newblock \url{http://dx.doi.org/10.1017/CBO9780511470967}.

\bibitem{yehudayoff2012proving}
A.~Yehudayoff.
\newblock Proving expansion in three steps.
\newblock {\em SIGACT News}, 43(3):67--84, Aug. 2012.
\newblock \url{http://doi.acm.org/10.1145/2421096.2421115}.

\bibitem{yodphotong2002proofs}
M.~Yodphotong and V.~Laohakosol.
\newblock Proofs of {Z}aremba's conjecture for powers of 6.
\newblock In {\em Proceedings of the {I}nternational {C}onference on {A}lgebra
  and its {A}pplications ({ICAA} 2002) ({B}angkok)}, pages 278--282.
  Chulalongkorn Univ., Bangkok, 2002.

\bibitem{zaremba1972methode}
S.~K. Zaremba.
\newblock {\em La m\'ethode des ``bons treillis'' pour le calcul des
  int\'egrales multiples}.
\newblock Academic Press, New York, 1972.

\end{thebibliography}

\end{document}